\documentclass[12pt]{amsart}

\usepackage{times}
\usepackage{latexsym,  amsmath,amscd, amssymb, amsthm,graphicx}
\usepackage[left=1in, right=1in, top=1in, bottom=1in, bindingoffset=0cm, a4paper]{geometry}
\usepackage[T1]{fontenc}
\usepackage{lmodern}
\usepackage[applemac]{inputenc}
\usepackage{calrsfs}
\usepackage{enumitem}
\usepackage[mathscr]{euscript}

\makeatletter
\def\thm@space@setup{%
  \thm@preskip=\parskip \thm@postskip=0pt 
}
\renewenvironment{proof}[1][\proofname]{\par
  \vspace{-\topsep}
  \pushQED{\qed}%
  \normalfont
  \topsep0pt \partopsep0pt 
  \trivlist
  \item[\hskip\labelsep
        \itshape
    #1\@addpunct{.}]\ignorespaces
}{%
  \popQED\endtrivlist\@endpefalse
}
\makeatother

\makeatletter
\def\subsection{\@startsection{subsection}{3}%
  \z@{.4\linespacing\@plus.0\linespacing}{.1\linespacing}%
  {\normalfont\bfseries}}
\def\subsubsection{\@startsection{subsubsection}{3}%
  \z@{.4\linespacing\@plus.0\linespacing}{.1\linespacing}%
  {\normalfont\itshape}}
\makeatother

\begin{document}
\bibliographystyle{plainnat}
\linespread{1.6}

\abovedisplayskip=0pt
\abovedisplayshortskip=0pt
\belowdisplayskip=0pt
\belowdisplayshortskip=0pt
\abovecaptionskip=0pt
\belowcaptionskip=0pt

\newcounter{Lcount}
\newcommand{\squishlisttwo}{
\begin{list}{\alph{Lcount}. }
 { \usecounter{Lcount}
 \setlength{\itemsep}{0pt}
 \setlength{\parsep}{0pt}
 \setlength{\topsep}{0pt}
 \setlength{\partopsep}{0pt}
 \setlength{\leftmargin}{2em}
 \setlength{\labelwidth}{1.5em}
 \setlength{\labelsep}{0.5em} } }

\newcommand{\squishend}{
 \end{list} }

\newtheorem{lemma}{Lemma}[subsection]
\newtheorem{definition}{Definition}[section]
\newtheorem{remarks}{Remark}[section]
\newtheorem{remark}{Remark}[subsection]
\newtheorem{theorem}{Theorem}[subsection]
\newtheorem{corollary}[lemma]{Corollary}
\newtheorem{proposition}[lemma]{Proposition}
\numberwithin{equation}{subsection}
\newtheorem{axiom}[lemma]{Axiom}
\newtheorem*{acknowledgements}{Acknowledgments}

\newcommand{\R}{\mbox{$\Bbb R$}} \newcommand{\C}{\mbox{$\Bbb C$}}\newcommand{\F}{\mbox{$\Bbb F$}}
\newcommand{\N}{\mbox{$\Bbb N$}}
\newcommand{\Z}{\mbox{$\Bbb Z$}} \def\g{\mathfrak{g}} \def\q{\mathfrak{q}}
\def\h{\mathfrak{h}} \def\c{\mathfrak{c}} \def\d{\mathfrak{d}}
\def\m{\mathfrak{m}} \def\n{\mathfrak{n}} \def\i{\mathfrak{i}}
\def\l{\mathfrak{l}} \def\s{\mathfrak{s}}
\def\L{\mathscr{L}}
\def\r{\mathscr{R}}

\mbox{}

\vskip 1cm

\title[Solvable extensions of quasi-filiform algebras]{Solvable extensions of the naturally graded quasi-filiform Leibniz algebra
of second type $\mathcal{L}^2$}\maketitle
\begin{center}
{A. Shabanskaya}

Department of
Mathematics and Statistics, The University of Toledo, 2801 W
Bancroft St. Toledo, OH 43606, USA

ashaban@rockets.utoledo.edu
\end{center}
\begin{abstract} For a sequence of the naturally graded quasi-filiform Leibniz algebra
of second type $\mathcal{L}^2$ introduced by Camacho, G\'{o}mez, Gonz\'{a}lez and Omirov, all possible right and left solvable indecomposable
extensions over the field $\R$ are constructed so that the algebra serves
as the nilradical of the corresponding solvable Leibniz algebras we find in the paper. 
 \end{abstract}

AMS Subject Classification: 17A30, 17A32, 17A36, 17A60, 17B30\\

Keywords: Leibniz algebra, solvability, nilpotency, nilradical, nil-independence, derivation.

\vskip 6cm

\newpage

\section{Introduction} 

Leibniz algebras were discovered by Bloch in 1965 \cite{B} who called them $D-$ algebras. Later on they were considered by Loday and Cuvier \cite{ C, Lo, L, LP} as
a non-antisymmetric analogue of Lie algebras. It makes every Lie algebra be a Leibniz algebra, but the converse is not true.
 Exactly Loday named them by Leibniz algebras after Gottfried Wilhelm Leibniz.
 
Since then many analogs of important theorems in Lie theory were found to be true for Leibniz algebras, such as
the analogue of Levi's theorem which was proved by Barnes \cite{Ba}. He showed that any finite-dimensional complex Leibniz algebra is decomposed into a semidirect sum of the solvable radical and a semisimple Lie algebra. 
Therefore the biggest challenge in the classification problem of finite-dimensional complex Leibniz algebras is to study the solvable part. And to classify solvable Leibniz algebras, we need nilpotent Leibniz algebras as their nilradicals, same as in the case of Lie algebras \cite{Mub3}.

There are also other analogous theorems such as Engel's theorem that has been generalized by several researchers like
 Ayupov and Omirov \cite{AO}, in a stronger form by Patsourakos \cite{P} and by Barnes who gave a simple proof of this theorem \cite{BD}.
Some other work on analogous properties could be found in the citations \cite{A, AR,  BDW, Barn, BC, BR, BH, Cas, D, FK, GV, MY, Om, OB}.

Every Leibniz algebra satisfies a generalized version of the Jacobi identity called the Leibniz identity.
 There are two Leibniz identities: the left and the right.
 We call Leibniz algebras right Leibniz algebras if they satisfy the right Leibniz identity
 and left, if they satisfy the left. A left Leibniz algebra is not necessarily
 a right Leibniz algebra \cite{D}. In this paper we construct both right and left solvable Leibniz algebras. \footnote{In the overview we only mention if the algebra is left, otherwise it is right.}
 
Leibniz algebras inherit an important property of Lie algebras which is that the right (left)
multiplication operator of a right (left) Leibniz algebra is a derivation \cite{CL}. Besides the algebra of right (left) multiplication operators
is endowed with a structure of a Lie algebra by means of the commutator \cite{CL}.  
 Also the quotient
algebra by the two-sided ideal generated by the square elements of a Leibniz algebra is a Lie algebra \cite{O},
where such ideal is the minimal, abelian and in the case of
non-Lie Leibniz algebras it is always non trivial. 

We will continue with an overview of the nilpotent and solvable Leibniz algebras over the field of characteristic zero as we only work with them.
In 1993 Loday found Leibniz
algebras in dimensions one and two over the field that does not have any zero divisors \cite{L}, such that 
the Leibniz algebra in dimension one is abelian and coincides with a Lie algebra
and in dimension two there are two non Lie Leibniz algebras: one is nilpotent (null-filiform) and one is solvable.

Leibniz algebras in dimension three over the complex numbers were classified by Omirov and Ayupov in 1998 \cite{AO,AOB} and were reviewed in 2012 
by Casas, Insua, Ladra, M. and Ladra, S. \cite{CI}. They noticed that one isomorphism class does not have a Leibniz algebra structure, so altogether there are four nilpotent (one is null-filiform, three are filiform) 
and seven solvable non Lie Leibniz algebras, such that one nilpotent and two solvable algebras contain a parameter.

Two dimensional and three dimensional left Leibniz algebras were classified by Demir, Misra and Stitzinger \cite{D} over the field of characteristic 0. In dimension two they found two non Lie Leibniz algebras: one is nilpotent, same as the right Leibniz algebra \cite{L}, and one is solvable. 
In dimension three there are five nilpotent Leibniz algebras with one of them depending on a parameter all isomorphic to the right Leibniz algebras \cite{CI} and seven solvable non Lie algebras such that two of them define one parametric families.

Four dimensional nilpotent complex Leibniz algebras
were classified by Albeverio, Omirov and Rakhimov \cite{AOR}. Altogether they found 21 nilpotent non Lie algebras,
where the first ten of them are either nulfiliform or filiform \cite{ASO} and remaining eleven are associative algebras,
such that only three of them, which are associative algebras, depend on a parameter.

Four dimensional solvable Leibniz algebras in dimension four over the field
of complex numbers were classified by Ca\~{n}ete and Khudoyberdiyev \cite{CK}.
They found 38 non Lie algebras, where 16 of them have up to two parameters.

In dimension five Khudoyberdiyev, Rakhimov and Said Husain studied solvable Leibniz algebras with 3-dimensional nilradicals \cite{KR}. 
They found 22 such algebras including one with the Heisenberg nilradical, which is
a Lie algebra, where twelve of them depend on up to four
parameters.

The classification of nilpotent Leibniz algebras has attempts to move to any finite dimension over the field of characteristic zero: Ayupov and Omirov proved that
in every finite dimension $n$ there is the only 
null-filiform non Lie Leibniz algebra \cite{ASO}.
Two kinds of filiform Leibniz algebras were classified in any finite dimension,
which are
naturally graded filiform Leibniz algebras \cite{ASO, Ve} and
non-characteristically nilpotent filiform Leibniz algebras \cite{ASO, KLO}.
 
There is a humongous work on filiform Leibniz algebras mostly based on their class:
$TLeib_{n+1},$ $FLeib_{n+1}$ or $SLeib_{n+1},$ which will not be overviewed, but
could be found in the references \cite{Ab, AA, ASO,CCR,GJK,GO,Omi,OR,Ra,RaS,RS,RB,RH,rh,rah,RIS,Rak,Ri}.

Naturally graded quasi-filiform Leibniz algebras in any finite dimension over $\C$ were studied by Camacho, G\'{o}mez, Gonz\'{a}lez,
Omirov \cite{CGGO}. They found six such algebras of the first type, where two of them depend on a parameter and eight algebras of the second type with one of them
depending on a parameter. This paper continues with finding all solvable extensions in any finite dimension of non characteristically
nilpotent quasi-filiform Leibniz algebras over the field of real numbers.
Solvable extensions of $\mathcal{L}^{1}$ and $\mathcal{L}^{3},(n\geq4)$ of the second type were found in \cite{Sha}.
In this paper we do the same work with $\mathcal{L}^{2}$ of the second type as well.

Some other work on nilpotent Leibniz algebras 
could be found in the following references \cite{CCR, CCGO, CO, Cam, Ca, CG, CR,CC}.

It is possible to find solvable Leibniz algebras in any finite dimension working with the sequence of nilpotent Leibniz algebras in any finite dimension and their ``nil-independent''
derivations. This method for Lie algebras is based on what was shown by
Mubarakzyanov in \cite{Mub3}: the dimension of the complimentary vector space to the nilradical does not exceed the number of 
nil-independent derivations of the nilradical. This result 
was extended for Leibniz algebras by Casas, Ladra, Omirov and Karimjanov \cite{CLOK} with the help
of \cite{AO}. Besides, similarly to the case of Lie algebras, for a solvable Leibniz
algebra $L$ we also have the inequality $\dim\n\i\l(L)\geq\frac{1}{2}\dim L$ \cite{Mub3}. 
Further, there is the following work performed over the field of characteristic zero using 
this method: Casas, Ladra, Omirov and Karimjanov classified solvable Leibniz algebras with null-filiform nilradical \cite{CLOK};
Omirov and his colleagues Casas, Khudoyberdiyev, Ladra, Karimjanov, Camacho and Masutova classified solvable Leibniz algebras whose nilradicals are a direct sum of null-filiform algebras \cite{KL}, naturally graded filiform \cite{CL, LadraMO}, triangular \cite{KK} and finally filiform \cite{COM}.
Bosko-Dunbar, Dunbar, Hird and Stagg attempted to classify left solvable Leibniz algebras with Heisenberg nilradical \cite{BDHS}.
Left and right solvable extensions of $\mathcal{R}_{18}$ \cite{AOR}, $\mathcal{L}^1$ and $\mathcal{L}^3$ \cite{CGGO} over the field of real numbers were 
found by Shabanskaya in \cite{ShA, Sha}.

Some other work and kinds of Leibniz algebras not discussed are shown in
the citations \cite{Abd, AB, ao, ACK, BaD, Bar, BS, ca, CCG, cgv, CLP, D, DA, F, G, GV, J, O, RR, Z}.

The starting point of the present article as was already mentioned is a naturally graded quasi-filiform non Lie Leibniz algebra of the second type
$\mathcal{L}^2,(n\geq4)$ in the notation given in \cite{CGGO}.
This algebra is left and right at the same time and an associative when $n=4$.

For a sequence $\mathcal{L}^2,$ we
find all possible solvable indecomposable extensions as left and right Leibniz algebras in every finite dimension over the field of real numbers. Such extensions of only dimensions one and two are
possible.
 
Right solvable extensions with a codimension
one nilradical $\mathcal{L}^2$ are found following the steps in
Theorems \ref{TheoremRL2}, \ref{TheoremRL2Absorption} and \ref{RL2(Change of Basis)} with the main result summarized in Theorem \ref{RL2(Change of Basis)}, where
it is shown there are four such algebras and neither of them is left at the same time. 
There is the only solvable indecomposable right Leibniz algebra with a codimension
two nilradical stated in Theorem \ref{RCodim2L2}, which is not left either.

We follow the steps in
Theorems \ref{TheoremLL2}, \ref{TheoremL(L2)Absorption} and \ref{TheoremL(L2)Basis} to find one dimensional left solvable extensions. 
We notice in Theorem \ref{TheoremL(L2)Basis}, that we have four of them as well and neither of them is right.
We find the only two dimensional left (not right) solvable extension stated in Theorem \ref{(L)Codim2L2}. 

As regards notation, we use $\langle e_1,e_2,...,e_r \rangle$ to
denote the $r$-dimensional subspace generated by
$e_1,e_2,...,e_r,$ where $r\in\N$, $LS$ is the lower central series and $DS$ is the derived series and refer to them collectively as the characteristic series. Besides $\g$
and $l$
are used to denote solvable right and left Leibniz algebras, respectively.

Throughout the paper all the algebras are finite dimensional over the field of real numbers and
if the bracket is not given, then it is assumed to be zero, except the brackets for the
nilradical, which are not given (see Remark \ref{remark5.1}) to save space. Besides when
we say that the algebra depends on a parameter, we do not mean a discrete value. 

In the tables throughout the paper an ordered triple is a shorthand notation
for a derivation property of the multiplication operators, which is either $\r_z\left([x,y]\right)=
[\r_z(x),y]+[x,\r_z(y)]$ or $\L_z\left([x,y]\right)=
[\L_z(x),y]+[x,\L_z(y)]$. We also assign $\r_{e_{n+1}}:=\r$ and $\L_{e_{n+1}}:=\L.$

We use Maple software to compute the Leibniz identity, the ``absorption'' (see \cite{Shab, ST} and
Section \ref{Solvable left Leibniz algebras}), the change
of basis for solvable Leibniz algebras in some particular dimensions, which are generalized and proved in an arbitrary finite dimension.

\section{Preliminaries}\label{Pr}
We give some Basic definitions encountered working with Leibniz algebras.
\begin{definition}
\begin{enumerate}[noitemsep, topsep=0pt]
\item[1.] A vector space L over a field F with a bilinear operation\\
$[-,-]:\,L\rightarrow L$
is called a Leibniz algebra if for any $x,\,y,\,z\in\,L$ the so called Leibniz identity
\begin{equation}\nonumber [[x,y],z]=[[x,z],y]+[x,[y,z]]\end{equation}
holds. This Leibniz identity is known as the right Leibniz identity and we call $L$
in this case a right Leibniz algebra.
\item[2.] There exists the version corresponding to the left Leibniz identity
\begin{equation}\nonumber
[[x,y],z]=[x,[y,z]]-[y,[x,z]],
\end{equation}
and a Leibniz algebra L is called a left Leibniz algebra.
\end{enumerate}
\end{definition}
\begin{remarks}
In addition, if $L$ satisfies $[x,x]=0$ for every $x\in L$, then it is a Lie algebra.
Therefore every Lie algebra is a Leibniz algebra, but the converse is not true.
\end{remarks}
\begin{definition}
The two-sided ideal $C(L)=\{x\in L:\,[x,y]=[y,x]=0\}$ is said to be the center of $L.$
\end{definition}
\begin{definition}
A linear map $d:\,L\rightarrow L$ of a Leibniz algebra $L$ is a derivation
if for all $x,\,y\in L$
\begin{equation}
\nonumber d([x,y])=[d(x),y]+[x,d(y)].
\end{equation}
\end{definition}
If $L$ is a right Leibniz algebra and $x\in L,$ then the right multiplication operator $\r_x:\,L\rightarrow L$ defined as $\r_x(y)=[y,x],\,y\in L$
is a derivation (for a left Leibniz algebra $L$ with $x\in L,$ the left multiplication operator $\L_x:\,L\rightarrow L,\,\L_x(y)=[x,y],\,y\in L$ is a derivation).

Any right Leibniz algebra $L$ is associated with the algebra of right multiplications $\r(L)=\{\r_x\,|\,x\in L\}$
endowed with the structure of a Lie algebra by means of the commutator $[\r_x,\r_y]=\r_x\r_y-\r_y\r_x=\r_{[y,x]},$
which defines an antihomomorphism between $L$ and $\r(L).$

For a left Leibniz algebra $L,$ the corresponding algebra of left multiplications $\L(L)=\{\L_x\,|\,x\in L\}$
is endowed with the structure of a Lie algebra by means of the commutator as well $[\L_x,\L_y]=\L_x\L_y-\L_y\L_x=\L_{[x,y]}.$
In this case we have a homomorphism between $L$ and $\L(L)$.
\begin{definition}
Let $d_1,d_2,...,d_n$ be derivations of a Leibniz algebra $L.$ The derivations $d_1,d_2,...,d_n$ are said to be ``nil-independent''
if $\alpha_1d_1+\alpha_2d_2+\alpha_3d_3+\ldots+\alpha_nd_n$ is not nilpotent for any scalars $\alpha_1,\alpha_2,...,\alpha_n\in F,$
otherwise they are said to be ``nil-dependent''.
\end{definition}
\begin{definition}
For a given Leibniz algebra $L,$ we define the sequence of two-sided ideals as follows:
\begin{equation}
\nonumber L^0=L,\,L^{k+1}=[L^k,L],\,(k\geq0)\qquad\qquad L^{(0)}=L,\,L^{(k+1)}=[L^{(k)},L^{(k)}],\,(k\geq0),
\end{equation}
which are the lower central series and the derived series of $L,$ respectively.

A Leibniz algebra $L$ is said to be nilpotent (solvable) if there exists $m\in\N$ such that $L^m=0$ ($L^{(m)}=0$).
The minimal such number $m$ is said to be the index of nilpotency (solvability). 
\end{definition}
\begin{definition}
A Leibniz algebra $L$ is called a quasi-filiform if $L^{n-3}\neq0$ and $L^{n-2}=0,$ where $\dim(L)=n.$ 
\end{definition}

\section{Constructing solvable Leibniz algebras with a given nilradical}

Every solvable Leibniz algebra $L$ contains a unique maximal nilpotent
ideal called the nilradical and denoted $\n\i\l(L)$ such that
$\dim\n\i\l(L)\geq\frac{1}{2}\dim(L)$ \cite{Mub3}. Let us consider
the problem of constructing solvable Leibniz algebras
$L$ with a given nilradical $N=\n\i\l(L)$. Suppose
$\{e_1,e_2,e_3,e_4,...,e_n\}$ is a basis for the nilradical and
$\{e_{n+1},...,e_{p}\}$ is a basis for a subspace complementary to
the nilradical.

If $L$ is a solvable Leibniz algebra \cite{AO}, then
\begin{equation}\label{Nil}[L,L]\subseteq N \end{equation} and we have
the following structure equations
\begin{equation}\label{Algebra}[e_i, e_j ] =C_{ij}^ke_k, [e_a, e_i]
=A^k_{ai}e_k, [e_i,e_a]=A^k_{ia}e_k, [e_a, e_b] =B^k_{ab} e_k, \end{equation} where $1\leq i,
j, k, m\leq n$ and $n+1\leq a, b\leq p$.

\subsection{Solvable right Leibniz algebras}  Calculations show that satisfying the right Leibniz identity is equivalent to the
following conditions: 
\begin{equation}\label{RLeibniz}
A_{ai}^kC_{kj}^m=A_{aj}^kC_{ki}^m+C_{ij}^kA_{ak}^m,\,A_{ia}^kC_{kj}^m=C_{ij}^kA_{ka}^m+A_{aj}^kC_{ik}^m,\,
C_{ij}^kA_{ka}^m=A_{ia}^kC_{kj}^m+A_{ja}^kC_{ik}^m,
\end{equation}
 \begin{equation}\label{RRLeibniz}
 B_{ab}^kC_{ki}^m=A_{ai}^kA_{kb}^m+A_{bi}^kA_{ak}^m,\,A_{ai}^kA_{kb}^m=B_{ab}^kC_{ki}^m+A_{ib}^kA_{ak}^m,\,
 B_{ab}^kC_{ik}^m=A_{kb}^mA_{ia}^k-A_{ka}^mA_{ib}^k.
\end{equation} 
 Then the entries of the matrices
$A_a,$ which are $(A_i^k)_a$, must satisfy the equations $(\ref{RLeibniz})$ which come from
all possible Leibniz identities between the triples $\{e_a,e_i,e_j\}.$ In $(\ref{RRLeibniz})$ we have
structure constants equations obtained from all Leibniz identities between the triples
$\{e_a,e_b,e_i\}$.

Since $N$ is the nilradical of $L$, no nontrivial linear
combination of the matrices $A_a,\,(n+1\leq a\leq p)$ is nilpotent,
which means that the matrices $A_a$ must be ``nil-independent''
(see Section \ref{Pr}., \cite{CLOK}, \cite{Mub3}).

Let us now consider the right multiplication operator $\r_{e_a}$ and restrict it
to $N$, $(n+1\leq a\leq p).$ We shall
get outer derivations of the nilradical $N=\n\i\l(L)$
\cite{CLOK}. Then finding the matrices $A_a$ is the same as
finding outer derivations $\r_{e_a}$ of $N.$ Further the commutators $[\r_{e_b},\r_{e_a}]=\r_{[e_a,e_b]},\,(n+1\leq a,b\leq
p)$ due to $(\ref{Nil})$ consist of inner derivations of
$N$. So those commutators give the structure constants $B_{ab}^k$ as shown in the last equation of $(\ref{RRLeibniz}),$ but only up to the elements in the center of the nilradical
 $N$, because if $e_i,\,(1\leq i\leq n)$ is in the center of $N,$ then $\left(\r_{e_i}\right)_{|_N}=0,$ where $\left(\r_{e_i}\right)_{|_{N}}$
 is an inner derivation of the nilradical. 
\subsection{Solvable left Leibniz algebras}\label{Solvable left Leibniz algebras} Satisfying the left Leibniz identity, we have
 \begin{equation}\label{LLeibniz}
A_{ai}^kC_{jk}^m=A_{ja}^kC_{ki}^m+C_{ji}^kA_{ak}^m,\,A_{ia}^kC_{jk}^m=C_{ji}^kA_{ka}^m+A_{ja}^kC_{ik}^m,\,
C_{ij}^kA_{ak}^m=A_{ai}^kC_{kj}^m+A_{aj}^kC_{ik}^m,
\end{equation}
 \begin{equation}\label{LLLeibniz}
 B_{ab}^kC_{ik}^m=A_{ia}^kA_{kb}^m+A_{ib}^kA_{ak}^m,\,A_{ib}^kA_{ak}^m=B_{ab}^kC_{ik}^m+A_{ai}^kA_{kb}^m,\,
 B_{ab}^kC_{ki}^m=A_{ak}^mA_{bi}^k-A_{bk}^mA_{ai}^k,
\end{equation} 
and the entries of the matrices
$A_a,$ which are $(A_a)_i^k$, must satisfy the equations $(\ref{LLeibniz})$.
Similarly $A_a=\left(\L_{e_a}\right)_{|_{N}},\,(n+1\leq a\leq p)$ are outer derivations and
the commutators $[\L_{e_a},\L_{e_b}]=\L_{[e_a,e_b]}$ give the structure constants $B_{ab}^k,$ but only up to the elements in the center of the nilradical
 $N$.

Once the left or right Leibniz identities are satisfied in the most general possible way and the outer derivations are found: 

\begin{enumerate}[noitemsep, topsep=0pt]
 \item[(i)]
We can carry out the technique of ``absorption'' \cite{Shab, ST}, which means we can
simplify a solvable Leibniz algebra without affecting the nilradical in $(\ref{Algebra})$ applying the transformation
\begin{equation}
\nonumber e^{\prime}_i=e_i,\,(1\leq i\leq n),\,e^{\prime}_a=e_a+\sum_{k=1}^nd^ke_k,\,(n+1\leq a\leq p).
\end{equation}
\item[(ii)] We change basis without affecting the
nilradical in $(\ref{Algebra})$ to remove all the possible parameters and simplify the algebra. \end{enumerate}

\section{The nilpotent sequence $\mathcal{L}^2$}
In $\mathcal{L}^2,(n\geq4)$ the positive
integer $n$ denotes the dimension of the algebra. The center
of this algebra is $C(\mathcal{L}^2)=\langle e_{2},e_n \rangle$. $\mathcal{L}^2$ can be described explicitly as follows: in the
basis $\{e_1,e_2,e_3,e_4,\ldots,e_n\}$ it has only the following
non-zero brackets:  
\begin{equation}
\begin{array}{l}
\displaystyle  [e_1,e_1]=e_{2},[e_i,e_1]=e_{i+1},(3\leq i\leq n-1),[e_1,e_3]=e_2-e_4,[e_1,e_j]=-e_{j+1},\\
\displaystyle (4\leq j\leq n-1,n\geq4).
\end{array} 
\label{L2}
\end{equation}

 The
dimensions of the ideals in the characteristic
series are
\begin{equation}\nonumber DS=[n,n-2,0],
LS=[n,n-2,n-4,n-5,n-6,...,0],(n\geq4).\end{equation}

A quasi-filiform Leibniz algebra  $\mathcal{L}^2$ was introduced by Camacho, G\'{o}mez, Gonz\'{a}lez and Omirov
in \cite{CGGO}.
This algebra is served as the nilradical for the
left and right solvable indecomposable extensions we construct in this
paper.

 It is shown below that solvable indecomposable right (left) Leibniz algebras
 with the nilradical $\mathcal{L}^2$ only exist for $\dim\g=n+1$ and
$\dim\g=n+2$ ($\dim l=n+1$ and
$\dim l=n+2$).

For the solvable indecomposable right Leibniz algebras with a codimension one
nilradical, we use the notation $\g_{n+1,i}$ where $n+1$ indicates the
dimension of the algebra $\g$ and $i$ its numbering within the list
of algebras. There are four types of such algebras up to isomorphism
so $1\leq i\leq 4,$ where neither of them are left at the same time. There is the only solvable indecomposable right Leibniz algebra up to
isomorphism with a codimension two nilradical denoted $\g_{n+2,1}$.

We also have four solvable indecomposable left Leibniz algebras with a codimension one
nilradical $\mathcal{L}^2$ denoted $l_{n+1,i},(1\leq i\leq 4)$.
There is the only solvable indecomposable left (not right) Leibniz algebra with a codimension two nilradical denoted $l_{n+2,1}$.

There are only two right (left) algebras that contain parameters, which are $\g_{n+1,1}$ and $\g_{n+1,4}$
(or $l_{n+1,1},l_{n+1,4}$).
Altogether right (left) algebras depend on at most $n-4,(n\geq6)$ parameters and only one parameter when $n=4,5$:
$\g_{n+1,1}$ (or $l_{n+1,1}$) depends on 1,$(n\geq4)$ and $\g_{n+1,4}$ (or $l_{n+1,4}$) depends on
$n-5,(n\geq6)$. Those algebras define a
continuous family of algebras.

 \section{Classification of solvable indecomposable Leibniz algebras with
a nilradical $\mathcal{L}^2$}

Our goal in this Section is to find all possible right and left solvable indecomposable extensions
of the nilpotent Leibniz algebra $\mathcal{L}^2,$ which serves
as the nilradical of the extended algebra.
\begin{remarks}\label{remark5.1} It is assumed throughout this section that the
solvable indecomposable right Leibniz algebras $\g$ and the solvable indecomposable left Leibniz algebras 
$l$ have the nilradical $\mathcal{L}^2$; however, the brackets of the nilradical will be omitted.
\end{remarks}
\subsection{Solvable indecomposable right Leibniz algebras with a nilradical $\mathcal{L}^2$}
\subsubsection{One dimensional right solvable extensions of $\mathcal{L}^2$}
 The nilpotent Leibniz algebra $\mathcal{L}^2$ is defined in $(\ref{L2})$. Suppose $\{e_{n+1}\}$
 is in the complementary subspace to the nilradical $\mathcal{L}^2$ and $\g$ is the corresponding solvable right Leibniz algebra.
 Since $[\g,\g]\subseteq \mathcal{L}^2,$ therefore we have
\begin{equation}
\left\{
\begin{array}{l}
\displaystyle  [e_1,e_1]=e_{2},[e_i,e_1]=e_{i+1},(3\leq i\leq n-1),[e_1,e_3]=e_2-e_4,\\
\displaystyle [e_1,e_j]=-e_{j+1},(4\leq j\leq n-1),[e_r,e_{n+1}]=\sum_{s=1}^na_{s,r}e_s,[e_{n+1},e_k]=\sum_{s=1}^nb_{s,k}e_s,\\
\displaystyle (1\leq k\leq n,1\leq r\leq n+1).
\end{array} 
\right.
\label{BRLeibniz}
\end{equation}
\begin{theorem}\label{TheoremRL2} Set $a_{1,1}:=a$ and $a_{2,2}:=b$ in $(\ref{BRLeibniz})$. To satisfy the right Leibniz identity, there are the following cases based on the conditions involving parameters,
each gives a continuous family of solvable Leibniz algebras:
\begin{enumerate}[noitemsep, topsep=0pt]
\item[(1)] If $a\neq0,b\neq2a,(n=4)$ and $b\neq(4-n)a,a\neq0,b\neq2a,(n\geq5),$ then
\begin{equation}
\left\{
\begin{array}{l}
\displaystyle  \nonumber [e_1,e_{n+1}]=ae_1+a_{2,1}e_2-(2a-b)e_3+\sum_{k=4}^n{a_{k,1}e_k},[e_2,e_{n+1}]=be_2,
[e_3,e_{n+1}]=a_{2,3}e_2-\\
\displaystyle (a-b)e_3+A_{4,3}e_4+\sum_{k=5}^n{a_{k,3}e_k},[e_{i},e_{n+1}]=\left((i-4)a+b\right)e_{i}+A_{4,3}e_{i+1}+\sum_{k=i+2}^n{a_{k-i+3,3}e_k},\\
\displaystyle(4\leq i\leq n),
[e_{n+1},e_{n+1}]=a_{2,n+1}e_2,[e_{n+1},e_1]=-ae_1+b_{2,1}e_2+(2a-b)e_3-\sum_{k=4}^n{a_{k,1}e_k},\\
\displaystyle [e_{n+1},e_3]=B_{2,3}e_2+(a-b)e_3-A_{4,3}e_4-\sum_{k=5}^n{a_{k,3}e_k},[e_{n+1},e_4]=ae_2-be_4-A_{4,3}e_5-\\
\displaystyle \sum_{k=6}^n{a_{k-1,3}e_k}, [e_{n+1},e_j]=\left((4-j)a-b\right)e_j-A_{4,3}e_{j+1}-\sum_{k=j+2}^n{a_{k-j+3,3}e_k},(5\leq j\leq n),\\
\displaystyle where\,\,B_{2,3}:=-a_{2,3}+\frac{(a-b)b_{2,1}+a(a_{2,1}+a_{4,1})}{2a-b}\,\,and\\
\displaystyle A_{4,3}:=-\frac{b}{a}\cdot a_{2,3}+\frac{(a-b)b_{2,1}+a(a_{2,1}+a_{4,1})}{2a-b},
\end{array} 
\right.
\end{equation} 
$\r_{e_{n+1}}=\left[\begin{smallmatrix}
 a & 0 & 0 & 0&0&&\cdots &0& \cdots&0 & 0&0 \\
  a_{2,1} & b & a_{2,3}& 0 &0& & \cdots &0&\cdots  & 0& 0&0\\
  -2a+b & 0 & -a+b & 0 & 0& &\cdots &0&\cdots &0& 0&0 \\
  a_{4,1} & 0 &  A_{4,3} & b &0 & &\cdots&0&\cdots &0 & 0&0\\
  a_{5,1} & 0 & a_{5,3} & A_{4,3} & a+b  &&\cdots &0 &\cdots&0 & 0&0 \\
 \boldsymbol{\cdot} & \boldsymbol{\cdot} & \boldsymbol{\cdot} & a_{5,3} & A_{4,3}  &\ddots& &\vdots &&\vdots & \vdots&\vdots \\
  \vdots & \vdots & \vdots &\vdots &\vdots  &\ddots& \ddots&\vdots &&\vdots & \vdots&\vdots \\
  a_{i,1} & 0 & a_{i,3} & a_{i-1,3} & a_{i-2,3}  &\cdots&A_{4,3}& (i-4)a+b&\cdots&0 & 0&0\\
   \vdots  & \vdots  & \vdots &\vdots &\vdots&&\vdots &\vdots &&\vdots  & \vdots&\vdots \\
 a_{n-1,1} & 0 & a_{n-1,3}& a_{n-2,3}& a_{n-3,3}&\cdots &a_{n-i+3,3} &a_{n-i+2,3}&\cdots&A_{4,3} &(n-5)a+b& 0\\
 a_{n,1} & 0 & a_{n,3}& a_{n-1,3}& a_{n-2,3}&\cdots &a_{n-i+4,3}  &a_{n-i+3,3}&\cdots&a_{5,3} &A_{4,3}& (n-4)a+b
\end{smallmatrix}\right].$
\item[(2)] If $b:=(4-n)a,a\neq0,(n\geq5),$
then the brackets for the algebra are  
\begin{equation}
\left\{
\begin{array}{l}
\displaystyle  \nonumber [e_1,e_{n+1}]=ae_1+a_{2,1}e_2+(2-n)ae_3+\sum_{k=4}^n{a_{k,1}e_k},[e_2,e_{n+1}]=(4-n)ae_2,\\
\displaystyle [e_3,e_{n+1}]=a_{2,3}e_2+(3-n)ae_3+A_{4,3}e_4+\sum_{k=5}^n{a_{k,3}e_k},[e_{i},e_{n+1}]=\left(i-n\right)ae_{i}+A_{4,3}e_{i+1}+\\
\displaystyle \sum_{k=i+2}^n{a_{k-i+3,3}e_k},(4\leq i\leq n-1),
[e_{n+1},e_{n+1}]=a_{2,n+1}e_2+a_{n,n+1}e_n,[e_{n+1},e_1]=-ae_1+\\
\displaystyle b_{2,1}e_2+(n-2)ae_3-\sum_{k=4}^n{a_{k,1}e_k}, [e_{n+1},e_3]=B_{2,3}e_2+(n-3)ae_3-A_{4,3}e_4-\sum_{k=5}^n{a_{k,3}e_k},\\
\displaystyle [e_{n+1},e_4]=ae_2+(n-4)ae_4-A_{4,3}e_5-\sum_{k=6}^n{a_{k-1,3}e_k}, [e_{n+1},e_j]=\left(n-j\right)ae_j-A_{4,3}e_{j+1}-\\
\displaystyle \sum_{k=j+2}^n{a_{k-j+3,3}e_k},(5\leq j\leq n-1),\, where\,\,B_{2,3}:=-a_{2,3}+\frac{(n-3)b_{2,1}+a_{2,1}+a_{4,1}}{n-2} \,\,and\\
\displaystyle A_{4,3}:=(n-4)a_{2,3}+\frac{(n-3)b_{2,1}+a_{2,1}+a_{4,1}}{n-2},
\end{array} 
\right.
\end{equation}
$\r_{e_{n+1}}=\left[\begin{smallmatrix}
 a & 0 & 0 & 0&0&&\cdots &0& \cdots&0 & 0&0 \\
  a_{2,1} & (4-n)a & a_{2,3}& 0 &0& & \cdots &0&\cdots  & 0& 0&0\\
  (2-n)a & 0 & (3-n)a & 0 & 0& &\cdots &0&\cdots &0& 0&0 \\
  a_{4,1} & 0 &  A_{4,3} & (4-n)a &0 & &\cdots&0&\cdots &0 & 0&0\\
  a_{5,1} & 0 & a_{5,3} & A_{4,3} & (5-n)a  &&\cdots &0 &\cdots&0 & 0&0 \\
 \boldsymbol{\cdot} & \boldsymbol{\cdot} & \boldsymbol{\cdot} & a_{5,3} & A_{4,3}  &\ddots& &\vdots &&\vdots & \vdots&\vdots \\
  \vdots & \vdots & \vdots &\vdots &\vdots  &\ddots& \ddots&\vdots &&\vdots & \vdots&\vdots \\
  a_{i,1} & 0 & a_{i,3} & a_{i-1,3} & a_{i-2,3}  &\cdots&A_{4,3}& (i-n)a&\cdots&0 & 0&0\\
   \vdots  & \vdots  & \vdots &\vdots &\vdots&&\vdots &\vdots &&\vdots  & \vdots&\vdots \\
 a_{n-1,1} & 0 & a_{n-1,3}& a_{n-2,3}& a_{n-3,3}&\cdots &a_{n-i+3,3} &a_{n-i+2,3}&\cdots&A_{4,3} &-a& 0\\
 a_{n,1} & 0 & a_{n,3}& a_{n-1,3}& a_{n-2,3}&\cdots &a_{n-i+4,3}  &a_{n-i+3,3}&\cdots&a_{5,3} &A_{4,3}& 0
\end{smallmatrix}\right].$
 
\item[(3)] If $a=0$ and $b\neq0,(n\geq4),$ then
\begin{equation}
\left\{
\begin{array}{l}
\displaystyle  \nonumber [e_1,e_{n+1}]=a_{2,1}e_2+be_3+\sum_{k=4}^n{a_{k,1}e_k}, [e_2,e_{n+1}]=be_2,
[e_{i},e_{n+1}]=be_{i}+\sum_{k=i+1}^n{a_{k-i+3,3}e_k},\\
\displaystyle (3\leq i\leq n),[e_{n+1},e_{n+1}]=a_{2,n+1}e_2,[e_{n+1},e_1]=b_{2,1}e_2-be_3-\sum_{k=4}^n{a_{k,1}e_k},\\
\displaystyle[e_{n+1},e_3]=b_{2,1}e_2-be_3-\sum_{k=4}^n{a_{k,3}e_k},[e_{n+1},e_j]=-be_j-\sum_{k=j+1}^n{a_{k-j+3,3}e_k},(4\leq j\leq n),
\end{array} 
\right.
\end{equation} 
$\r_{e_{n+1}}=\left[\begin{smallmatrix}
 0 & 0 & 0 & 0&0&&\cdots &0& \cdots&0 & 0&0 \\
  a_{2,1} & b & 0& 0 &0& & \cdots &0&\cdots  & 0& 0&0\\
  b & 0 & b & 0 & 0& &\cdots &0&\cdots &0& 0&0 \\
  a_{4,1} & 0 &  a_{4,3} & b &0 & &\cdots&0&\cdots &0 & 0&0\\
  a_{5,1} & 0 & a_{5,3} & a_{4,3} & b  &&\cdots &0 &\cdots&0 & 0&0 \\
 \boldsymbol{\cdot} & \boldsymbol{\cdot} & \boldsymbol{\cdot} & a_{5,3} & a_{4,3}  &\ddots& &\vdots &&\vdots & \vdots&\vdots \\
  \vdots & \vdots & \vdots &\vdots &\vdots  &\ddots& \ddots&\vdots &&\vdots & \vdots&\vdots \\
  a_{i,1} & 0 & a_{i,3} & a_{i-1,3} & a_{i-2,3}  &\cdots&a_{4,3}& b&\cdots&0 & 0&0\\
   \vdots  & \vdots  & \vdots &\vdots &\vdots&&\vdots &\vdots &&\vdots  & \vdots&\vdots \\
 a_{n-1,1} & 0 & a_{n-1,3}& a_{n-2,3}& a_{n-3,3}&\cdots &a_{n-i+3,3} &a_{n-i+2,3}&\cdots&a_{4,3} &b& 0\\
 a_{n,1} & 0 & a_{n,3}& a_{n-1,3}& a_{n-2,3}&\cdots &a_{n-i+4,3}  &a_{n-i+3,3}&\cdots&a_{5,3} &a_{4,3}& b
\end{smallmatrix}\right].$
   \allowdisplaybreaks
\item[(4)] If $b:=2a,a\neq0,(n\geq4),$ then
\begin{equation}
\left\{
\begin{array}{l}
\displaystyle  \nonumber [e_1,e_{n+1}]=ae_1+a_{2,1}e_2+\sum_{k=4}^n{a_{k,1}e_k},[e_2,e_{n+1}]=2ae_2,
[e_3,e_{n+1}]=a_{2,3}e_2+ae_3+\\
\displaystyle A_{4,3}e_4+\sum_{k=5}^n{a_{k,3}e_k},[e_{i},e_{n+1}]=\left(i-2\right)ae_{i}+A_{4,3}e_{i+1}+\sum_{k=i+2}^n{a_{k-i+3,3}e_k},(4\leq i\leq n),\\
\displaystyle
[e_{n+1},e_{n+1}]=a_{2,n+1}e_2,[e_{n+1},e_1]=-ae_1+\left(a_{2,1}+a_{4,1}\right)e_2-\sum_{k=4}^n{a_{k,1}e_k},\\
\displaystyle [e_{n+1},e_3]=b_{2,3}e_2-ae_3-A_{4,3}e_4-\sum_{k=5}^n{a_{k,3}e_k},[e_{n+1},e_4]=ae_2-2ae_4-A_{4,3}e_5-\\
\displaystyle \sum_{k=6}^n{a_{k-1,3}e_k}, [e_{n+1},e_j]=\left(2-j\right)ae_j-A_{4,3}e_{j+1}-\sum_{k=j+2}^n{a_{k-j+3,3}e_k},(5\leq j\leq n),\\
\displaystyle where\,\, A_{4,3}:=b_{2,3}-a_{2,3},
\end{array} 
\right.
\end{equation} 
$\r_{e_{n+1}}=\left[\begin{smallmatrix}
 a & 0 & 0 & 0&0&&\cdots &0& \cdots&0 & 0&0 \\
  a_{2,1} & 2a & a_{2,3}& 0 &0& & \cdots &0&\cdots  & 0& 0&0\\
  0 & 0 & a & 0 & 0& &\cdots &0&\cdots &0& 0&0 \\
  a_{4,1} & 0 &  A_{4,3} & 2a &0 & &\cdots&0&\cdots &0 & 0&0\\
  a_{5,1} & 0 & a_{5,3} & A_{4,3} & 3a  &&\cdots &0 &\cdots&0 & 0&0 \\
 \boldsymbol{\cdot} & \boldsymbol{\cdot} & \boldsymbol{\cdot} & a_{5,3} & A_{4,3}  &\ddots& &\vdots &&\vdots & \vdots&\vdots \\
  \vdots & \vdots & \vdots &\vdots &\vdots  &\ddots& \ddots&\vdots &&\vdots & \vdots&\vdots \\
  a_{i,1} & 0 & a_{i,3} & a_{i-1,3} & a_{i-2,3}  &\cdots&A_{4,3}& (i-2)a&\cdots&0 & 0&0\\
   \vdots  & \vdots  & \vdots &\vdots &\vdots&&\vdots &\vdots &&\vdots  & \vdots&\vdots \\
 a_{n-1,1} & 0 & a_{n-1,3}& a_{n-2,3}& a_{n-3,3}&\cdots &a_{n-i+3,3} &a_{n-i+2,3}&\cdots&A_{4,3} &(n-3)a& 0\\
 a_{n,1} & 0 & a_{n,3}& a_{n-1,3}& a_{n-2,3}&\cdots &a_{n-i+4,3}  &a_{n-i+3,3}&\cdots&a_{5,3} &A_{4,3}& (n-2)a
\end{smallmatrix}\right].$

\end{enumerate}
\end{theorem}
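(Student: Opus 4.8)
The plan is to proceed in two stages: first determine the restriction of $\r_{e_{n+1}}$ to the nilradical, which by the discussion preceding the statement must be an (outer) derivation of $\mathcal{L}^2$, and then recover the left multiplications $[e_{n+1},e_k]$ together with the self-bracket $[e_{n+1},e_{n+1}]$ from the remaining Leibniz identities.

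\emph{Computing the derivation $\r_{e_{n+1}}$.} I would write a generic endomorphism $d(e_r)=\sum_{s=1}^{n}a_{s,r}e_s$ of $\mathcal{L}^2$ and impose the derivation property $d([e_i,e_j])=[d(e_i),e_j]+[e_i,d(e_j)]$ on every relation of $(\ref{L2})$, together with the vanishing brackets (notably $[e_1,e_2]=[e_2,e_1]=0$, $[e_i,e_i]=0$ for $i\geq2$, and $[e_i,e_j]=0$ for the remaining pairs). Since every derivation preserves the center $C(\mathcal{L}^2)=\langle e_2,e_n\rangle$ and each term of the lower central series $\langle e_2,e_4,e_5,\dots,e_n\rangle\supseteq\langle e_5,e_6,\dots,e_n\rangle\supseteq\cdots$, the matrix of $d$ is forced into the banded, Toeplitz-type shape shown in the statement. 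Setting $a:=a_{1,1}$ and $b:=a_{2,2}$, applying $d$ to $[e_1,e_1]=e_2$ and using $d(e_2)\in\langle e_2,e_n\rangle$ already forces $a_{3,1}=b-2a$; then applying $d$ to $[e_i,e_1]=e_{i+1}$ and to $[e_1,e_j]=-e_{j+1}$ produces the recursions that make the entries along each sub-diagonal of the third and later columns equal (the $a_{k,3}$ pattern) and fix the diagonal weights as $b$ on $e_2$, $b-a$ on $e_3$, and $(i-4)a+b$ on $e_i$ for $i\geq4$. Solving this linear system uniformly requires dividing by $2a-b$ and by $(n-4)a+b$ (the latter being the weight on $e_n$), and it is exactly the vanishing of these quantities, together with $a=0$, that forces the four cases; when $b=(4-n)a$ the weight on $e_n$ is zero, which is why the term $a_{n,n+1}e_n$ in $[e_{n+1},e_{n+1}]$ survives only in case (2). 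Discarding the nilpotent derivations, which would not leave $\mathcal{L}^2$ as the nilradical, amounts to requiring $(a,b)\neq(0,0)$, and the surviving locus is precisely the union of the four cases.

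\emph{Recovering the left multiplications.} Once $\r_{e_{n+1}}$ is known, I would use the general consequence of the right Leibniz identity that $[z,[x,y]+[y,x]]=0$ for all $x,y,z$. Taking $x=e_{n+1}$, $y=e_k$ and $z=e_1$, and noting from $(\ref{L2})$ that $\{w\in\mathcal{L}^2:[e_1,w]=0\}=\langle e_2,e_n\rangle$, gives $[e_{n+1},e_k]+[e_k,e_{n+1}]\in\langle e_2,e_n\rangle$, and with $x=y=e_{n+1}$ it gives $[e_{n+1},e_{n+1}]\in\langle e_2,e_n\rangle$. Hence each $[e_{n+1},e_k]$ is determined by the already-known $[e_k,e_{n+1}]$ up to its $e_2$- and $e_n$-components; feeding these expressions into the Leibniz identities on the triples $\{e_{n+1},e_k,e_1\}$, $\{e_{n+1},e_k,e_3\}$ and $\{e_{n+1},e_{n+1},e_k\}$ (i.e.\ equations $(\ref{RLeibniz})$ and $(\ref{RRLeibniz})$) pins down all of these components except $b_{2,1}$, $a_{2,n+1}$ (and $a_{n,n+1}$ in case (2)), and it yields $B_{2,3}$ and $A_{4,3}$ as the stated rational expressions in the surviving entries. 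A final check that no remaining Leibniz identity is violated closes each case.

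\emph{Main obstacle.} The hard part will be the first stage: running the derivation computation uniformly in $n$ while correctly absorbing the low-index irregularities of $\mathcal{L}^2$ --- the mismatch between $[e_1,e_1]=e_2$, $[e_1,e_3]=e_2-e_4$, and $[e_1,e_j]=-e_{j+1}$ for $j\geq4$ --- and tracking how they propagate into the sub-diagonal recursions, so that one lands on the precise four cases and the precise banded form of $\r_{e_{n+1}}$ rather than an equivalent but unwieldy description. Verifying that the two degeneracy loci $2a-b=0$ and $(n-4)a+b=0$, plus $a=0$, really account for all the special behaviour is where the care is needed; once these are in place, the second stage is comparatively mechanical.
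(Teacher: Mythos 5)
Your plan is essentially correct but is organized differently from the paper's proof, which simply applies a fixed sequence of twenty right Leibniz identities on specific ordered triples (Table \ref{Right(L2)}), determining first the brackets $[e_i,e_{n+1}]$ (steps 1--5), then $[e_{n+1},e_i]$ coefficient by coefficient (steps 6--15), then $[e_{n+1},e_{n+1}]$ (steps 16--18), and finally the coupled relations giving $A_{4,3}$ and $B_{2,3}$ (steps 19--20), with separate bookkeeping for $n=4$ and for each degenerate case. You instead (a) compute the abstract derivation algebra of $\mathcal{L}^2$ first, using preservation of the center and of the lower central series to force the banded Toeplitz shape, and (b) invoke the symmetrization consequence $[x,[y,z]+[z,y]]=0$ of the right Leibniz identity to conclude in one stroke that $[e_{n+1},e_k]+[e_k,e_{n+1}]$ and $[e_{n+1},e_{n+1}]$ lie in $\langle e_2,e_n\rangle=\ker(\r_{e_1}|_{\L^2})$, so that only the $e_2$- and $e_n$-components remain to be found. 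Point (b) is a genuine economy over the paper's steps 6--15 and is a clean way to see why the left multiplications are the negatives of the right ones up to central corrections. One caveat on your roadmap: you locate the divisions by $2a-b$ (and implicitly by $a$) in the stage-one derivation computation, but that computation is actually case-free --- the paper's steps 1--5 involve no divisions at all. The case splitting arises entirely in your second stage, from the mixed identities coupling the two multiplication operators: $\r[e_{n+1},e_3]$ forces division by $a$ (step 19) and $\r_{e_1}([e_{n+1},e_{n+1}])$ forces division by $2a-b$ (step 20), which is why case (4) replaces the formula for $B_{2,3}$ by the constraint $b_{2,1}=a_{2,1}+a_{4,1}$. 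You do correctly identify that $b=(4-n)a$ is not a division issue but the vanishing of the $e_n$-weight, which is what allows $a_{n,n+1}e_n$ to survive in $[e_{n+1},e_{n+1}]$ in case (2). With that relocation of the difficulty, your two-stage argument reaches the same four cases and the same normal forms.
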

\begin{proof} 
\begin{enumerate}[noitemsep, topsep=0pt]
\item[(1)] Suppose $b\neq(4-n)a,a\neq0,b\neq2a,(n\geq5).$ 
Then the proof is off-loaded to Table \ref{Right(L2)}. If $a\neq0$ and $b\neq2a,(n=4),$ then we consider applicable identities given in Table \ref{Right(L2)}. and most of calculations go the same way with two differences: first instead of
$2.$ we apply the Leibniz identity $\r[e_3,e_3]=[\r(e_3),e_3]+[e_3,\r(e_3)]$ to obtain that $a_{1,3}=0$ and $[e_3,e_5]=
a_{2,3}e_2+a_{3,3}e_3+a_{4,3}e_4$, second, when applying the identity $13.$ (the identity in $15.$ is the same as $13.$, when $n=4$), we deduce that $b_{2,4}:=a,$
$b_{4,4}:=-b$ and $[e_5,e_4]=ae_2-be_4.$
\item[(2)]  Suppose $b:=(4-n)a,a\neq0,(n\geq5).$ We apply the identities
given in Table \ref{Right(L2)}., except the identity $18.$
\item[(3)] Suppose $a=0$ and $b\neq0.$ If $(n\geq5),$ then we apply the identities $1.-18.$ given in Table \ref{Right(L2)}.,
but for the last two identities, we apply:
$\r_{e_3}\left([e_{n+1},e_{n+1}]\right)=[\r_{e_3}(e_{n+1}),e_{n+1}]+[e_{n+1},\r_{e_3}(e_{n+1})],$
$\r[e_{n+1},e_{1}]=[\r(e_{n+1}),e_{1}]+[e_{n+1},\r(e_{1})]$ and quod erat demonstrandum.
If $n=4,$ then we repeat case $(1),$ except for $19.$ and $20.$
applying two identities given above.
\item[(4)] Suppose $b:=2a,a\neq0.$ We repeat case $(1)$
with the difference that the identity $20.$ gives 
instead of the condition on the parameter $b_{2,3}$ that $b_{2,1}:=a_{2,1}+a_{4,1}.$
It implies that $[e_{n+1},e_1]=-ae_1+(a_{2,1}+a_{4,1})e_2-\sum_{k=4}^n{a_{k,1}e_k}.$
\end{enumerate}
\end{proof} 
\newpage
\begin{table}[htb]
\caption{Right identities in the generic case in Theorem \ref{TheoremRL2}, ($n\geq5$).}
\label{Right(L2)}
\begin{tabular}{|l|p{2.4cm}|p{12cm}|}
\hline
\scriptsize Steps &\scriptsize Ordered triple &\scriptsize
Result\\ \hline
\scriptsize $1.$ &\scriptsize $\r[e_1,e_{1}]$ &\scriptsize
$a_{1,2}=0,a_{3,1}:=-2a+b,a_{k,2}=0,(3\leq k\leq n)$
$\implies$ $[e_1,e_{n+1}]=ae_1+a_{2,1}e_2+(-2a+b)e_3+\sum_{k=4}^n{a_{k,1}e_k},[e_2,e_{n+1}]=be_2.$\\ \hline
\scriptsize $2.$ &\scriptsize $\r[e_{i},e_{3}]$ &\scriptsize
$a_{1,3}=0,a_{1,i}=0,(4\leq i\leq n-1)$
$\implies$ $[e_j,e_{n+1}]=\sum_{k=2}^n{a_{k,j}e_k},(3\leq j\leq n-1).$\\ \hline
\scriptsize $3.$ &\scriptsize $\r[e_{i},e_{1}]$ &\scriptsize
$a_{2,i+1}=a_{3,i+1}=0,a_{i+1,i+1}:=(i-2)a+a_{3,3},a_{k,i+1}:=a_{k-1,i},(4\leq k\leq n,k\neq i+1,3\leq i\leq n-2),$
where $i$ is fixed
$\implies$ $[e_{j},e_{n+1}]=\left((j-3)a+a_{3,3}\right)e_j+\sum_{k=j+1}^n{a_{k-j+3,3}e_k},(4\leq j\leq n-1).$\\ \hline
\scriptsize $4.$ &\scriptsize $\r[e_{n-1},e_{1}]$ &\scriptsize
$a_{k,n}=0,(1\leq k\leq n-1),$ $a_{n,n}:=(n-3)a+a_{3,3}$
$\implies$ $[e_{n},e_{n+1}]=\left((n-3)a+a_{3,3}\right)e_n.$ Combining with $3.,$
$[e_{i},e_{n+1}]=\left((i-3)a+a_{3,3}\right)e_i+\sum_{k=i+1}^n{a_{k-i+3,3}e_k},(4\leq i\leq n).$\\ \hline
\scriptsize $5.$ &\scriptsize $\r[e_1,e_{3}]$ &\scriptsize
$a_{3,3}:=-a+b$
$\implies$ $[e_3,e_{n+1}]=a_{2,3}e_2+(-a+b)e_3+\sum_{k=4}^n{a_{k,3}e_k},
[e_{i},e_{n+1}]=\left((i-4)a+b\right)e_i+\sum_{k=i+1}^n{a_{k-i+3,3}e_k},(4\leq i\leq n).$\\ \hline
\scriptsize $6.$ &\scriptsize $\r_{e_1}\left([e_{n+1},e_{1}]\right)$ &\scriptsize
$[e_{n+1},e_2]=0$
$\implies$ $b_{k,2}=0,(1\leq k\leq n).$\\ \hline
\scriptsize $7.$ &\scriptsize $\r_{e_1}\left([e_3,e_{n+1}]\right)$ &\scriptsize
$b_{1,1}:=-a$
$\implies$ $[e_{n+1},e_1]=-ae_1+\sum_{k=2}^{n}b_{k,1}e_k.$\\ \hline
\scriptsize $8.$ &\scriptsize $\r_{e_1}\left([e_{1},e_{n+1}]\right)$ &\scriptsize
$b_{3,1}:=2a-b,b_{k-1,1}:=-a_{k-1,1},(5\leq k\leq n)$
$\implies$ $[e_{n+1},e_1]=-ae_1+b_{2,1}e_2+(2a-b)e_3-\sum_{k=4}^{n-1}{a_{k,1}e_k}+b_{n,1}e_n.$ \\ \hline
\scriptsize $9.$ &\scriptsize $\r_{e_i}\left([e_{3},e_{n+1}]\right)$ &\scriptsize
$b_{1,i}=0$
$\implies$ $[e_{n+1},e_{i}]=\sum_{k=2}^n{b_{k,i}e_k},(3\leq i\leq n).$  \\ \hline
\scriptsize $10.$ &\scriptsize $\r_{e_3}\left([e_{1},e_{n+1}]\right)$ &\scriptsize
$b_{3,3}:=a-b,b_{k-1,3}:=-a_{k-1,3},(5\leq k\leq n)$
$\implies$ $[e_{n+1},e_{3}]=b_{2,3}e_2+(a-b)e_3-\sum_{k=4}^{n-1}{a_{k,3}e_k}+b_{n,3}e_n.$\\ \hline
\scriptsize $11.$ &\scriptsize $\r_{e_i}\left([e_{1},e_{n+1}]\right)$ &\scriptsize
$b_{3,i}=0,b_{i,i}:=(4-i)a-b, b_{k-1,i}=0,(5\leq k\leq i),$ $b_{k-1,i}:=-a_{k-i+2,3},(i+2\leq k\leq n),$
where $i$ is fixed.
$\implies$  $[e_{n+1},e_i]=b_{2,i}e_2+\left((4-i)a-b\right)e_i-\sum_{k=i+1}^{n-1}{a_{k-i+3,3}e_k}+b_{n,i}e_n,(4\leq i\leq n-1).$ \\ \hline
\scriptsize $12.$ &\scriptsize $\r_{e_{n}}\left([e_{1},e_{n+1}]\right)$ &\scriptsize
$b_{3,n}=0,b_{k-1,n}=0,(5\leq k\leq n)$
$\implies$ $[e_{n+1},e_{n}]=b_{2,n}e_2+b_{n,n}e_n.$\\ \hline
\scriptsize $13.$ &\scriptsize $\r_{e_{3}}\left([e_{n+1},e_{1}]\right)$ &\scriptsize
$b_{2,4}:=a,b_{n,4}:=-a_{n-1,3}$
$\implies$ $[e_{n+1},e_{4}]=ae_2-be_4-\sum_{k=5}^n{a_{k-1,3}e_k}.$\\ \hline
\scriptsize $14.$ &\scriptsize $\r_{e_{i}}\left([e_{n+1},e_{1}]\right)$ &\scriptsize
$b_{2,i+1}=0,b_{n,i+1}:=-a_{n-i+2,3},(4\leq i\leq n-2)$
$\implies$ $[e_{n+1},e_{j}]=\left((4-j)a-b\right)e_j-\sum_{k=j+1}^n{a_{k-j+3,3}e_k},(5\leq j\leq n-1).$\\ \hline
\scriptsize $15.$ &\scriptsize $\r_{e_{n-1}}\left([e_{n+1},e_{1}]\right)$ &\scriptsize
$b_{2,n}=0,b_{n,n}:=(4-n)a-b$
$\implies$ $[e_{n+1},e_{n}]=\left((4-n)a-b\right)e_n.$ Combining with $14.,$
$[e_{n+1},e_{i}]=\left((4-i)a-b\right)e_i-\sum_{k=i+1}^n{a_{k-i+3,3}e_k},(5\leq i\leq n).$\\ \hline
\scriptsize $16.$ &\scriptsize $\r[e_{3},e_{n+1}]$ &\scriptsize
$a_{1,n+1}=0$
$\implies$ $[e_{n+1},e_{n+1}]=\sum_{k=2}^n{a_{k,n+1}e_k}.$\\ \hline
\scriptsize $17.$ &\scriptsize $\r[e_{1},e_{n+1}]$ &\scriptsize
$a_{3,n+1}=0,a_{k-1,n+1}=0,(5\leq k\leq n)$
$\implies$ $[e_{n+1},e_{n+1}]=a_{2,n+1}e_2+a_{n,n+1}e_n.$\\ \hline
\scriptsize $18.$ &\scriptsize $\r[e_{n+1},e_{n+1}]$ &\scriptsize
$a_{n,n+1}=0$
$\implies$ $[e_{n+1},e_{n+1}]=a_{2,n+1}e_2.$\\ \hline
\scriptsize $19.$ &\scriptsize $\r[e_{n+1},e_{3}]$ &\scriptsize
$b_{n,3}:=-a_{n,3},a_{4,3}:=b_{2,3}+\frac{(a-b)a_{2,3}}{a}$
$\implies$ $[e_3,e_{n+1}]=a_{2,3}e_2+(-a+b)e_3+\left(b_{2,3}+\frac{(a-b)a_{2,3}}{a}\right)e_4+\sum_{k=5}^n{a_{k,3}e_k},
[e_{i},e_{n+1}]=\left((i-4)a+b\right)e_i+\left(b_{2,3}+\frac{(a-b)a_{2,3}}{a}\right)e_{i+1}+\sum_{k=i+2}^n{a_{k-i+3,3}e_k},(4\leq i\leq n),[e_{n+1},e_{3}]=
b_{2,3}e_2+(a-b)e_3-\left(b_{2,3}+\frac{(a-b)a_{2,3}}{a}\right)e_4-\sum_{k=5}^{n}{a_{k,3}e_k},
[e_{n+1},e_{4}]=ae_2-be_4-\left(b_{2,3}+\frac{(a-b)a_{2,3}}{a}\right)e_5-\sum_{k=6}^n{a_{k-1,3}e_k},[e_{n+1},e_{j}]=
\left((4-j)a-b\right)e_j-\left(b_{2,3}+\frac{(a-b)a_{2,3}}{a}\right)e_{j+1}-\sum_{k=j+2}^n{a_{k-j+3,3}e_k},(5\leq j\leq n).$\\ \hline
\scriptsize $20.$ &\scriptsize $\r_{e_1}\left([e_{n+1},e_{n+1}]\right)$ &\scriptsize
$b_{n,1}:=-a_{n,1},b_{2,3}:=-a_{2,3}+\frac{(a-b)b_{2,1}+a(a_{4,1}+a_{2,1})}{2a-b}.$\\ \hline
\end{tabular}
\end{table}
\begin{theorem}\label{TheoremRL2Absorption} Applying the technique of ``absorption'' (see Section \ref{Solvable left Leibniz algebras}), we can further simplify the algebras 
in each of the four cases in Theorem \ref{TheoremRL2} as follows:
\begin{enumerate}[noitemsep, topsep=0pt]
\allowdisplaybreaks
\item[(1)] If $a\neq0,b\neq2a$ when $n=4$ and $b\neq(4-n)a,a\neq0,b\neq2a,(n\geq5),$ then
\begin{equation}
\left\{
\begin{array}{l}
\displaystyle  \nonumber [e_1,e_{n+1}]=ae_1+\left(a_{2,1}+a_{4,1}-A_{4,3}\right)e_2-(2a-b)e_3,[e_2,e_{n+1}]=be_2,
[e_3,e_{n+1}]=a_{2,3}e_2-\\
\displaystyle (a-b)e_3+\sum_{k=5}^n{a_{k,3}e_k},[e_{i},e_{n+1}]=\left((i-4)a+b\right)e_{i}+\sum_{k=i+2}^n{a_{k-i+3,3}e_k},(4\leq i\leq n),\\
\displaystyle[e_{n+1},e_{n+1}]=a_{2,n+1}e_2,[e_{n+1},e_1]=-ae_1+\left(b_{2,1}-A_{4,3}\right)e_2+(2a-b)e_3,\\
\displaystyle [e_{n+1},e_3]=\frac{(b-a)a_{2,3}}{a}e_2+(a-b)e_3-\sum_{k=5}^n{a_{k,3}e_k},[e_{n+1},e_4]=ae_2-be_4-\sum_{k=6}^n{a_{k-1,3}e_k},\\
\displaystyle [e_{n+1},e_j]=\left((4-j)a-b\right)e_j-\sum_{k=j+2}^n{a_{k-j+3,3}e_k},(5\leq j\leq n),\\
\displaystyle where\,\, A_{4,3}:=-\frac{b}{a}\cdot a_{2,3}+\frac{(a-b)b_{2,1}+a(a_{2,1}+a_{4,1})}{2a-b},
\end{array} 
\right.
\end{equation} 
$\r_{e_{n+1}}=\left[\begin{smallmatrix}
 a & 0 & 0 & 0&0&0&\cdots && 0&0 & 0\\
  a_{2,1}+a_{4,1}-A_{4,3} & b & a_{2,3}& 0 &0&0 & \cdots &&0  & 0& 0\\
  -2a+b & 0 & -a+b & 0 & 0&0 &\cdots &&0 &0& 0\\
  0 & 0 &  0 & b &0 &0 &\cdots&&0 &0 & 0\\
 0 & 0 & a_{5,3} & 0 & a+b  &0&\cdots & &0&0 & 0\\
  0 & 0 &\boldsymbol{\cdot} & a_{5,3} & 0 &2a+b&\cdots & &0&0 & 0\\
    0 & 0 &\boldsymbol{\cdot} & \boldsymbol{\cdot} & \ddots &0&\ddots &&\vdots&\vdots &\vdots\\
  \vdots & \vdots & \vdots &\vdots &  &\ddots&\ddots &\ddots &\vdots&\vdots & \vdots\\
 0 & 0 & a_{n-2,3}& a_{n-3,3}& \cdots&\cdots &a_{5,3}&0&(n-6)a+b &0& 0\\
0 & 0 & a_{n-1,3}& a_{n-2,3}& \cdots&\cdots &\boldsymbol{\cdot}&a_{5,3}&0 &(n-5)a+b& 0\\
 0 & 0 & a_{n,3}& a_{n-1,3}& \cdots&\cdots &\boldsymbol{\cdot}&\boldsymbol{\cdot}&a_{5,3} &0& (n-4)a+b
\end{smallmatrix}\right].$
\item[(2)] If $b:=(4-n)a,a\neq0,(n\geq5),$
then the brackets for the algebra are  
\begin{equation}
\left\{
\begin{array}{l}
\displaystyle  \nonumber [e_1,e_{n+1}]=ae_1+\left(a_{2,1}+a_{4,1}-A_{4,3}\right)e_2+(2-n)ae_3,[e_2,e_{n+1}]=(4-n)ae_2,\\
\displaystyle [e_3,e_{n+1}]=a_{2,3}e_2+(3-n)ae_3+\sum_{k=5}^n{a_{k,3}e_k},[e_{i},e_{n+1}]=\left(i-n\right)ae_{i}+\sum_{k=i+2}^n{a_{k-i+3,3}e_k},\\
\displaystyle (4\leq i\leq n-1),
[e_{n+1},e_{n+1}]=a_{n,n+1}e_n,[e_{n+1},e_1]=-ae_1+(b_{2,1}-A_{4,3})e_2+(n-2)ae_3,\\
\displaystyle [e_{n+1},e_3]=(3-n)a_{2,3}e_2+(n-3)ae_3-\sum_{k=5}^n{a_{k,3}e_k},[e_{n+1},e_4]=ae_2+(n-4)ae_4-\\
\displaystyle \sum_{k=6}^n{a_{k-1,3}e_k}, [e_{n+1},e_j]=\left(n-j\right)ae_j-\sum_{k=j+2}^n{a_{k-j+3,3}e_k},(5\leq j\leq n-1),\\
\displaystyle where\,\,A_{4,3}:=(n-4)a_{2,3}+\frac{(n-3)b_{2,1}+a_{2,1}+a_{4,1}}{n-2},
\end{array} 
\right.
\end{equation}
$\r_{e_{n+1}}=\left[\begin{smallmatrix}
 a & 0 & 0 & 0&0&0&\cdots && 0&0 & 0\\
  a_{2,1}+a_{4,1}-A_{4,3} & (4-n)a & a_{2,3}& 0 &0&0 & \cdots &&0  & 0& 0\\
  (2-n)a & 0 & (3-n)a & 0 & 0&0 &\cdots &&0 &0& 0\\
  0 & 0 &  0 & (4-n)a &0 &0 &\cdots&&0 &0 & 0\\
 0 & 0 & a_{5,3} & 0 & (5-n)a  &0&\cdots & &0&0 & 0\\
  0 & 0 &\boldsymbol{\cdot} & a_{5,3} & 0 &(6-n)a&\cdots & &0&0 & 0\\
    0 & 0 &\boldsymbol{\cdot} & \boldsymbol{\cdot} & \ddots &0&\ddots &&\vdots&\vdots &\vdots\\
  \vdots & \vdots & \vdots &\vdots &  &\ddots&\ddots &\ddots &\vdots&\vdots & \vdots\\
 0 & 0 & a_{n-2,3}& a_{n-3,3}& \cdots&\cdots &a_{5,3}&0&-2a &0& 0\\
0 & 0 & a_{n-1,3}& a_{n-2,3}& \cdots&\cdots &\boldsymbol{\cdot}&a_{5,3}&0 &-a& 0\\
 0 & 0 & a_{n,3}& a_{n-1,3}& \cdots&\cdots &\boldsymbol{\cdot}&\boldsymbol{\cdot}&a_{5,3} &0&0
\end{smallmatrix}\right].$
 
\item[(3)] If $a=0$ and $b\neq0,(n\geq4),$ then
\begin{equation}
\left\{
\begin{array}{l}
\displaystyle  \nonumber [e_1,e_{n+1}]=a_{2,1}e_2+be_3, [e_2,e_{n+1}]=be_2,
[e_{i},e_{n+1}]=be_{i}+\sum_{k=i+2}^n{a_{k-i+3,3}e_k},(3\leq i\leq n),\\
\displaystyle [e_{n+1},e_1]=b_{2,1}e_2-be_3,[e_{n+1},e_3]=b_{2,1}e_2-be_3-\sum_{k=5}^n{a_{k,3}e_k},\\
\displaystyle [e_{n+1},e_j]=-be_j-\sum_{k=j+2}^n{a_{k-j+3,3}e_k},(4\leq j\leq n),
\end{array} 
\right.
\end{equation} 
$\r_{e_{n+1}}=\left[\begin{smallmatrix}
 0 & 0 & 0 & 0&0&0&\cdots && 0&0 & 0\\
  a_{2,1} & b & 0& 0 &0&0 & \cdots &&0  & 0& 0\\
  b & 0 & b & 0 & 0&0 &\cdots &&0 &0& 0\\
  0 & 0 &  0 & b &0 &0 &\cdots&&0 &0 & 0\\
 0 & 0 & a_{5,3} & 0 & b  &0&\cdots & &0&0 & 0\\
  0 & 0 &\boldsymbol{\cdot} & a_{5,3} & 0 &b&\cdots & &0&0 & 0\\
    0 & 0 &\boldsymbol{\cdot} & \boldsymbol{\cdot} & \ddots &0&\ddots &&\vdots&\vdots &\vdots\\
  \vdots & \vdots & \vdots &\vdots &  &\ddots&\ddots &\ddots &\vdots&\vdots & \vdots\\
 0 & 0 & a_{n-2,3}& a_{n-3,3}& \cdots&\cdots &a_{5,3}&0&b &0& 0\\
0 & 0 & a_{n-1,3}& a_{n-2,3}& \cdots&\cdots &\boldsymbol{\cdot}&a_{5,3}&0 &b& 0\\
 0 & 0 & a_{n,3}& a_{n-1,3}& \cdots&\cdots &\boldsymbol{\cdot}&\boldsymbol{\cdot}&a_{5,3} &0&b
\end{smallmatrix}\right].$

   \allowdisplaybreaks
\item[(4)] If $b:=2a,a\neq0,(n\geq4),$ then
\begin{equation}
\left\{
\begin{array}{l}
\displaystyle  \nonumber [e_1,e_{n+1}]=ae_1+a_{2,1}e_2,[e_2,e_{n+1}]=2ae_2,
[e_3,e_{n+1}]=a_{2,3}e_2+ae_3+\sum_{k=5}^n{a_{k,3}e_k},\\
\displaystyle [e_{i},e_{n+1}]=\left(i-2\right)ae_{i}+\sum_{k=i+2}^n{a_{k-i+3,3}e_k},(4\leq i\leq n),[e_{n+1},e_1]=-ae_1+a_{2,1}e_2,\\
\displaystyle [e_{n+1},e_3]=a_{2,3}e_2-ae_3-\sum_{k=5}^n{a_{k,3}e_k},[e_{n+1},e_4]=ae_2-2ae_4-\sum_{k=6}^n{a_{k-1,3}e_k},\\
\displaystyle [e_{n+1},e_j]=\left(2-j\right)ae_j-\sum_{k=j+2}^n{a_{k-j+3,3}e_k},(5\leq j\leq n),
\end{array} 
\right.
\end{equation} 
$\r_{e_{n+1}}=\left[\begin{smallmatrix}
 a & 0 & 0 & 0&0&0&\cdots && 0&0 & 0\\
  a_{2,1} & 2a & a_{2,3}& 0 &0&0 & \cdots &&0  & 0& 0\\
 0 & 0 & a & 0 & 0&0 &\cdots &&0 &0& 0\\
  0 & 0 &  0 & 2a &0 &0 &\cdots&&0 &0 & 0\\
 0 & 0 & a_{5,3} & 0 & 3a  &0&\cdots & &0&0 & 0\\
  0 & 0 &\boldsymbol{\cdot} & a_{5,3} & 0 &4a&\cdots & &0&0 & 0\\
    0 & 0 &\boldsymbol{\cdot} & \boldsymbol{\cdot} & \ddots &0&\ddots &&\vdots&\vdots &\vdots\\
  \vdots & \vdots & \vdots &\vdots &  &\ddots&\ddots &\ddots &\vdots&\vdots & \vdots\\
 0 & 0 & a_{n-2,3}& a_{n-3,3}& \cdots&\cdots &a_{5,3}&0&(n-4)a &0& 0\\
0 & 0 & a_{n-1,3}& a_{n-2,3}& \cdots&\cdots &\boldsymbol{\cdot}&a_{5,3}&0 &(n-3)a& 0\\
 0 & 0 & a_{n,3}& a_{n-1,3}& \cdots&\cdots &\boldsymbol{\cdot}&\boldsymbol{\cdot}&a_{5,3} &0& (n-2)a
\end{smallmatrix}\right].$

\end{enumerate}
\end{theorem}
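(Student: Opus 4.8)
The plan is to realize each of the four simplifications as a single explicit instance of the absorption transformation $e_i'=e_i$ $(1\le i\le n)$, $e_{n+1}'=e_{n+1}+\sum_{k=1}^{n}d_k e_k$ from Section~\ref{Solvable left Leibniz algebras}, and to write down the scalars $d_1,\dots,d_n$ that turn the brackets of Theorem~\ref{TheoremRL2} into the stated ones. Since this is a linear change of basis fixing $e_1,\dots,e_n$, the right Leibniz identity and the brackets of $\mathcal{L}^2$ are automatically preserved, so nothing needs to be re-checked there; one only recomputes the mixed brackets $[e_i,e_{n+1}']$, $[e_{n+1}',e_i]$ and $[e_{n+1}',e_{n+1}']$.

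First I would record, once and for all from $(\ref{L2})$, how a single $d_k$ propagates. The only nonzero products among $e_1,\dots,e_n$ are $[e_1,e_1]=e_2$, $[e_1,e_3]=e_2-e_4$, $[e_1,e_j]=-e_{j+1}$ $(4\le j\le n-1)$ and $[e_i,e_1]=e_{i+1}$ $(3\le i\le n-1)$. Consequently $d_1$ adds $d_1e_{i+1}$ to $[e_i,e_{n+1}]$ for $3\le i\le n-1$ — it shifts the first subdiagonal of $\r_{e_{n+1}}$ in columns $3,\dots,n-1$ — and adds $d_1e_2$ to $[e_1,e_{n+1}]$; the scalars $d_3,\dots,d_{n-1}$ add $-\sum_{j=3}^{n-1}d_je_{j+1}$ to $[e_1,e_{n+1}]$ and $\sum_{j=3}^{n-1}d_je_{j+1}$ to $[e_{n+1},e_1]$, i.e.\ they only redistribute the first column of $\r_{e_{n+1}}$ below row~$3$; and $d_2,d_n$ change no column of $\r_{e_{n+1}}$ at all, affecting only $[e_{n+1}',e_{n+1}']$. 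For the last bracket I would use the remark, immediate from Theorem~\ref{TheoremRL2}, that $[e_k,e_{n+1}]+[e_{n+1},e_k]\in C(\mathcal{L}^2)=\langle e_2,e_n\rangle$ for every $k\le n$, so that the correction to $[e_{n+1}',e_{n+1}']$ produced by $\sum_k d_k\!\left([e_k,e_{n+1}]+[e_{n+1},e_k]\right)$, together with the purely quadratic term $\sum_{k,l}d_kd_l[e_k,e_l]$, lands in $\langle e_2,e_n\rangle$ and cannot alter the shape of the algebra.

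With these rules the four cases reduce to bookkeeping. In case~(1) one sets $d_1:=-A_{4,3}$, which at once removes all the subdiagonal $A_{4,3}$-entries in columns $3,\dots,n-1$ of $\r_{e_{n+1}}$, replaces $b_{2,1}$ by $b_{2,1}-A_{4,3}$ in $[e_{n+1},e_1]$, and turns the $e_2$-coefficient $B_{2,3}$ of $[e_{n+1},e_3]$ into $B_{2,3}-A_{4,3}=\frac{(b-a)a_{2,3}}{a}$ (the relevant subdiagonal terms in $[e_3,e_{n+1}]$, $[e_{n+1},e_3]$ and $[e_{n+1},e_4]$ cancel against the $d_1$-shift); then $d_j:=a_{j+1,1}$ for $3\le j\le n-1$ clears $a_{4,1},\dots,a_{n,1}$ from the first column, leaving its $e_2$-entry equal to $a_{2,1}+a_{4,1}-A_{4,3}$, and $d_2=d_n=0$. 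Cases~(2)--(4) go the same way after the obvious substitutions: in case~(2) use the case-(2) value of $A_{4,3}$ for $d_1$ and, since $b=(4-n)a\neq0$ there, use $d_2$ to delete $a_{2,n+1}$ from $[e_{n+1},e_{n+1}]$; in case~(3), where $a=0$ and the subdiagonal entries are $a_{4,3}$, take $d_1:=-a_{4,3}$ and again use $d_2$ (legitimate since $b\neq0$) to remove $a_{2,n+1}$ entirely; in case~(4), where $b=2a\neq0$, take $d_1:=-A_{4,3}=a_{2,3}-b_{2,3}$ and $d_2$ to remove $a_{2,n+1}$. Substituting back reproduces the displayed brackets and matrices.

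The step I expect to demand the most care is deciding exactly which parameters survive the reduction. One must check that $a_{2,n+1}$ can be absorbed precisely when $b\neq0$ (because $d_2[e_2,e_{n+1}]=d_2b\,e_2$), so it is retained in case~(1), where the constraints permit $b=0$, but is eliminated in cases~(2)--(4); and that in case~(2) the extra parameter $a_{n,n+1}$, present because identity~$18$ of Table~\ref{Right(L2)} is not imposed there, genuinely cannot be removed, since $b=(4-n)a$ forces $[e_n,e_{n+1}]=[e_{n+1},e_n]=0$ while $[e_i,e_n]=[e_n,e_i]=0$ in the nilradical, so $d_n$ contributes nothing to $[e_{n+1}',e_{n+1}']$. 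Beyond this case analysis and the mechanical verification that the chosen $d_k$ indeed yield the claimed brackets — in particular that the $d_1$-shift of the left brackets $[e_{n+1},e_1],[e_{n+1},e_3],[e_{n+1},e_4],\dots$ changes them only by the amounts recorded in the statement — the argument is routine.
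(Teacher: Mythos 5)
Your proposal is correct and follows essentially the same route as the paper: the single transformation $e_{n+1}'=e_{n+1}+\sum_k d_ke_k$ with $d_1=-A_{4,3}$ (resp.\ $-a_{4,3}$), $d_j=a_{j+1,1}$ for $3\le j\le n-1$, and $d_2$ chosen to kill $a_{2,n+1}$ when $b\neq0$ is exactly the composite of the two or three successive absorption transformations the paper applies in each case, and your bookkeeping of which parameters survive (retaining $a_{2,n+1}$ in case (1), $a_{n,n+1}$ in case (2)) agrees with the stated result. The only nitpick is that your general rule for the effect of $d_3$ on $[e_1,e_{n+1}]$ omits the $d_3e_2$ contribution coming from $[e_1,e_3]=e_2-e_4$, although your case-by-case conclusion $a_{2,1}+a_{4,1}-A_{4,3}$ for the $(2,1)$ entry shows you did include it where it matters.
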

\begin{proof}
\begin{enumerate}[noitemsep, topsep=0pt]
\item[(1)] Suppose $a\neq0,b\neq2a,(n=4)$ and $b\neq(4-n)a,a\neq0,b\neq2a,(n\geq5).$ The
left multiplication operator (not a derivation) restricted to the nilradical is given below:
$$\L_{e_{n+1}}=\left[\begin{smallmatrix}
 -a & 0 & 0 & 0&0&&\cdots &0& \cdots&0 & 0&0 \\
  b_{2,1} & 0 & B_{2,3}& a &0& & \cdots &0&\cdots  & 0& 0&0\\
  2a-b & 0 & a-b & 0 & 0& &\cdots &0&\cdots &0& 0&0 \\
  -a_{4,1} & 0 &  -A_{4,3} & -b &0 & &\cdots&0&\cdots &0 & 0&0\\
  -a_{5,1} & 0 & -a_{5,3} & -A_{4,3} & -a-b  &&\cdots &0 &\cdots&0 & 0&0 \\
 \boldsymbol{\cdot} & \boldsymbol{\cdot} & \boldsymbol{\cdot} & -a_{5,3} & -A_{4,3}  &\ddots& &\vdots &&\vdots & \vdots&\vdots \\
  \vdots & \vdots & \vdots &\vdots &\vdots  &\ddots& \ddots&\vdots &&\vdots & \vdots&\vdots \\
  -a_{i,1} & 0 & -a_{i,3} & -a_{i-1,3} & -a_{i-2,3}  &\cdots&-A_{4,3}& (4-i)a-b&\cdots&0 & 0&0\\
   \vdots  & \vdots  & \vdots &\vdots &\vdots&&\vdots &\vdots &&\vdots  & \vdots&\vdots \\
 -a_{n-1,1} & 0 & -a_{n-1,3}& -a_{n-2,3}& -a_{n-3,3}&\cdots &-a_{n-i+3,3} &-a_{n-i+2,3}&\cdots&-A_{4,3} &(5-n)a-b& 0\\
 -a_{n,1} & 0 & -a_{n,3}& -a_{n-1,3}& -a_{n-2,3}&\cdots &-a_{n-i+4,3}  &-a_{n-i+3,3}&\cdots&-a_{5,3} &-A_{4,3}& (4-n)a-b
\end{smallmatrix}\right].$$
\allowdisplaybreaks
\begin{itemize}
\item The transformation $e^{\prime}_k=e_k,(1\leq k\leq n),e^{\prime}_{n+1}=e_{n+1}-A_{4,3}e_1$
removes $A_{4,3}$ in $\r_{e_{n+1}}$ and $-A_{4,3}$ in $\L_{e_{n+1}}$ from the $(i,i-1)^{st}$ entries, where $(4\leq i\leq n),$ 
 but it affects other entries as well,
such as
the entry in the $(2,1)^{st}$ position in $\r_{e_{n+1}}$ and $\L_{e_{n+1}},$
which we change to $a_{2,1}-A_{4,3}$ and $b_{2,1}-A_{4,3},$ respectively.
It also changes the entry in the $(2,3)^{rd}$ position in $\L_{e_{n+1}}$ to 
$\frac{(b-a)a_{2,3}}{a}.$
At the same time, it affects the coefficient in front of $e_2$ in the bracket $[e_{n+1},e_{n+1}],$ which we change back to $a_{2,n+1}$.
\item Then we apply the transformation $e^{\prime}_i=e_i,(1\leq i\leq n),e^{\prime}_{n+1}=e_{n+1}+\sum_{k=3}^{n-1}a_{k+1,1}e_{k}$
to remove $a_{k+1,1}$ and $-a_{k+1,1}$ from the entries in the $(k+1,1)^{st}$
positions, where $(3\leq k\leq n-1)$ in $\r_{e_{n+1}}$ and $\L_{e_{n+1}},$ respectively. The transformation also changes the entry in the $(2,1)^{st}$ position in
$\r_{e_{n+1}}$ to $a_{2,1}+a_{4,1}-A_{4,3}$
and the coefficient in front of $e_2$ in $[e_{n+1},e_{n+1}],$ which
we name back by $a_{2,n+1}.$
\end{itemize}
\item[(2)] Suppose $b:=(4-n)a,a\neq0,(n\geq5).$ The left multiplication operator (not a derivation) restricted to the nilradical is

$\L_{e_{n+1}}=\left[\begin{smallmatrix}
 -a & 0 & 0 & 0&0&&\cdots &0& \cdots&0 & 0&0 \\
  b_{2,1} & 0 & B_{2,3}& a &0& & \cdots &0&\cdots  & 0& 0&0\\
  (n-2)a & 0 & (n-3)a & 0 & 0& &\cdots &0&\cdots &0& 0&0 \\
  -a_{4,1} & 0 &  -A_{4,3} & (n-4)a &0 & &\cdots&0&\cdots &0 & 0&0\\
  -a_{5,1} & 0 & -a_{5,3} & -A_{4,3} & (n-5)a  &&\cdots &0 &\cdots&0 & 0&0 \\
 \boldsymbol{\cdot} & \boldsymbol{\cdot} & \boldsymbol{\cdot} & -a_{5,3} & -A_{4,3}  &\ddots& &\vdots &&\vdots & \vdots&\vdots \\
  \vdots & \vdots & \vdots &\vdots &\vdots  &\ddots& \ddots&\vdots &&\vdots & \vdots&\vdots \\
  -a_{i,1} & 0 & -a_{i,3} & -a_{i-1,3} & -a_{i-2,3}  &\cdots&-A_{4,3}& (n-i)a&\cdots&0 & 0&0\\
   \vdots  & \vdots  & \vdots &\vdots &\vdots&&\vdots &\vdots &&\vdots  & \vdots&\vdots \\
 -a_{n-1,1} & 0 & -a_{n-1,3}& -a_{n-2,3}& -a_{n-3,3}&\cdots &-a_{n-i+3,3} &-a_{n-i+2,3}&\cdots&-A_{4,3} &a& 0\\
 -a_{n,1} & 0 & -a_{n,3}& -a_{n-1,3}& -a_{n-2,3}&\cdots &-a_{n-i+4,3}  &-a_{n-i+3,3}&\cdots&-a_{5,3} &-A_{4,3}& 0
\end{smallmatrix}\right].$
\begin{itemize}
\item Applying the transformation $e^{\prime}_k=e_k,(1\leq k\leq n),e^{\prime}_{n+1}=e_{n+1}-A_{4,3}e_1,$
we remove $A_{4,3}$ and $-A_{4,3}$ in $\r_{e_{n+1}}$ and $\L_{e_{n+1}},$ respectively, from the $(i,i-1)^{st}$ positions, where $(4\leq i\leq n),$ 
 but other entries are affected as well,
such as
the entry in the $(2,1)^{st}$ position in $\r_{e_{n+1}}$ and $\L_{e_{n+1}},$
which we change to $a_{2,1}-A_{4,3}$ and $b_{2,1}-A_{4,3},$ respectively.
It also changes the entry in the $(2,3)^{rd}$ position in $\L_{e_{n+1}}$ to $(3-n)a_{2,3}.$
At the same time, it affects the coefficient in front of $e_2$ in the bracket $[e_{n+1},e_{n+1}],$ which we change back to $a_{2,n+1}$.
\item Then we apply the transformation $e^{\prime}_i=e_i,(1\leq i\leq n),e^{\prime}_{n+1}=e_{n+1}+\sum_{k=3}^{n-1}a_{k+1,1}e_{k}$
to remove $a_{k+1,1}$ in $\r_{e_{n+1}}$ and $-a_{k+1,1}$ in $\L_{e_{n+1}}$ from the entries in the $(k+1,1)^{st}$
positions, where $(3\leq k\leq n-1).$ It changes the entry in the $(2,1)^{st}$ position in
$\r_{e_{n+1}}$ to $a_{2,1}+a_{4,1}-A_{4,3}$
and the coefficient in front of $e_2$ in $[e_{n+1},e_{n+1}],$ which
we rename back by $a_{2,n+1}.$
\item The transformation $e^{\prime}_j=e_j,(1\leq j\leq n),e^{\prime}_{n+1}=e_{n+1}+\frac{a_{2,n+1}}{(n-4)a}e_2$ 
removes the coefficient $a_{2,n+1}$ in front of $e_2$ in $[e_{n+1},e_{n+1}]$ and we prove the result.
\end{itemize}
\item[(3)] Suppose $a=0$ and $b\neq0,(n\geq4).$ The left multiplication operator (not a derivation)
restricted to the nilradical is given below:

$\L_{e_{n+1}}=\left[\begin{smallmatrix}
 0 & 0 & 0 & 0&0&&\cdots &0& \cdots&0 & 0&0 \\
  b_{2,1} & 0 &b_{2,1}& 0 &0& & \cdots &0&\cdots  & 0& 0&0\\
  -b & 0 & -b & 0 & 0& &\cdots &0&\cdots &0& 0&0 \\
  -a_{4,1} & 0 &  -a_{4,3} & -b &0 & &\cdots&0&\cdots &0 & 0&0\\
  -a_{5,1} & 0 & -a_{5,3} & -a_{4,3} & -b  &&\cdots &0 &\cdots&0 & 0&0 \\
 \boldsymbol{\cdot} & \boldsymbol{\cdot} & \boldsymbol{\cdot} & -a_{5,3} & -a_{4,3}  &\ddots& &\vdots &&\vdots & \vdots&\vdots \\
  \vdots & \vdots & \vdots &\vdots &\vdots  &\ddots& \ddots&\vdots &&\vdots & \vdots&\vdots \\
  -a_{i,1} & 0 & -a_{i,3} & -a_{i-1,3} & -a_{i-2,3}  &\cdots&-a_{4,3}& -b&\cdots&0 & 0&0\\
   \vdots  & \vdots  & \vdots &\vdots &\vdots&&\vdots &\vdots &&\vdots  & \vdots&\vdots \\
 -a_{n-1,1} & 0 & -a_{n-1,3}& -a_{n-2,3}& -a_{n-3,3}&\cdots &-a_{n-i+3,3} &-a_{n-i+2,3}&\cdots&-a_{4,3} &-b& 0\\
 -a_{n,1} & 0 & -a_{n,3}& -a_{n-1,3}& -a_{n-2,3}&\cdots &-a_{n-i+4,3}  &-a_{n-i+3,3}&\cdots&-a_{5,3} &-a_{4,3}& -b
\end{smallmatrix}\right].$
\begin{itemize}
\item The transformation $e^{\prime}_k=e_k,(1\leq k\leq n),e^{\prime}_{n+1}=e_{n+1}-a_{4,3}e_1$
removes $a_{4,3}$ in $\r_{e_{n+1}}$ and $-a_{4,3}$ in $\L_{e_{n+1}}$ from the $(i,i-1)^{st}$ entries, where $(4\leq i\leq n),$ 
 but it affects other entries as well,
such as
the entry in the $(2,1)^{st}$ position in $\r_{e_{n+1}}$ and $\L_{e_{n+1}},$
which we change to $a_{2,1}-a_{4,3}$ and $b_{2,1}-a_{4,3},$ respectively.
It also changes the entry in the $(2,3)^{rd}$ position in $\L_{e_{n+1}}$ to $b_{2,1}-a_{4,3}$.
At the same time, it affects the coefficient in front of $e_2$ in the bracket $[e_{n+1},e_{n+1}],$ which we change back to $a_{2,n+1}$.
\item Then we apply the transformation $e^{\prime}_i=e_i,(1\leq i\leq n),e^{\prime}_{n+1}=e_{n+1}+\sum_{k=3}^{n-1}a_{k+1,1}e_{k}$
to remove $a_{k+1,1}$ and $-a_{k+1,1}$ in $\r_{e_{n+1}}$ and $\L_{e_{n+1}},$ respectively, from the entries in the $(k+1,1)^{st}$
positions, where $(3\leq k\leq n-1).$ It changes the entry in the $(2,1)^{st}$ position in
$\r_{e_{n+1}}$ to $a_{2,1}+a_{4,1}-a_{4,3}$
and the coefficient in front of $e_2$ in $[e_{n+1},e_{n+1}],$ which
we rename back by $a_{2,n+1}.$
\item We assign $a_{2,1}+a_{4,1}-a_{4,3}:=a_{2,1}$ and $b_{2,1}-a_{4,3}:=b_{2,1}$
and finally we apply the transformation $e^{\prime}_j=e_j,(1\leq j\leq n),e^{\prime}_{n+1}=e_{n+1}-\frac{a_{2,n+1}}{b}e_2$ 
to remove the coefficient $a_{2,n+1}$ in front of $e_2$ in $[e_{n+1},e_{n+1}].$
\end{itemize}

\item[(4)] Suppose $b:=2a,a\neq0,(n\geq4).$ The left multiplication operator (not a derivation)
restricted to the nilradical is

$\L_{e_{n+1}}=\left[\begin{smallmatrix}
 -a & 0 & 0 & 0&0&&\cdots &0& \cdots&0 & 0&0 \\
  a_{2,1}+a_{4,1} & 0 & b_{2,3}& a &0& & \cdots &0&\cdots  & 0& 0&0\\
  0 & 0 & -a & 0 & 0& &\cdots &0&\cdots &0& 0&0 \\
 - a_{4,1} & 0 &  -A_{4,3} & -2a &0 & &\cdots&0&\cdots &0 & 0&0\\
  -a_{5,1} & 0 & -a_{5,3} & -A_{4,3} & -3a  &&\cdots &0 &\cdots&0 & 0&0 \\
 \boldsymbol{\cdot} & \boldsymbol{\cdot} & \boldsymbol{\cdot} & -a_{5,3} & -A_{4,3}  &\ddots& &\vdots &&\vdots & \vdots&\vdots \\
  \vdots & \vdots & \vdots &\vdots &\vdots  &\ddots& \ddots&\vdots &&\vdots & \vdots&\vdots \\
  -a_{i,1} & 0 & -a_{i,3} & -a_{i-1,3} & -a_{i-2,3}  &\cdots&-A_{4,3}& (2-i)a&\cdots&0 & 0&0\\
   \vdots  & \vdots  & \vdots &\vdots &\vdots&&\vdots &\vdots &&\vdots  & \vdots&\vdots \\
 -a_{n-1,1} & 0 & -a_{n-1,3}& -a_{n-2,3}& -a_{n-3,3}&\cdots &-a_{n-i+3,3} &-a_{n-i+2,3}&\cdots&-A_{4,3} &(3-n)a& 0\\
 -a_{n,1} & 0 & -a_{n,3}& -a_{n-1,3}& -a_{n-2,3}&\cdots &-a_{n-i+4,3}  &-a_{n-i+3,3}&\cdots&-a_{5,3} &-A_{4,3}& (2-n)a
\end{smallmatrix}\right].$
\begin{itemize}
\item The transformation $e^{\prime}_k=e_k,(1\leq k\leq n),e^{\prime}_{n+1}=e_{n+1}-A_{4,3}e_1,$
where $A_{4,3}:=b_{2,3}-a_{2,3}$
removes $A_{4,3}$ in $\r_{e_{n+1}}$ and $-A_{4,3}$ in $\L_{e_{n+1}}$ from the $(i,i-1)^{st}$ positions, where $(4\leq i\leq n),$ 
 but it affects other entries as well,
such as
the entry in the $(2,1)^{st}$ position in $\r_{e_{n+1}}$ and $\L_{e_{n+1}},$
which we change to $a_{2,1}-A_{4,3}$ and $a_{2,1}+a_{4,1}-A_{4,3},$ respectively.
It also changes the entry in the $(2,3)^{rd}$ position in $\L_{e_{n+1}}$ to $a_{2,3}$.
At the same time, the transformation affects the coefficient in front of $e_2$ in the bracket $[e_{n+1},e_{n+1}],$ which we change back to $a_{2,n+1}$.
\item Applying the transformation $e^{\prime}_i=e_i,(1\leq i\leq n),e^{\prime}_{n+1}=e_{n+1}+\sum_{k=3}^{n-1}a_{k+1,1}e_{k},$
we remove $a_{k+1,1}$ and $-a_{k+1,1}$ in $\r_{e_{n+1}}$ snd $\L_{e_{n+1}},$ respectively, from the entries in the $(k+1,1)^{st}$
positions, where $(3\leq k\leq n-1).$ It changes the entry in the $(2,1)^{st}$ position in
$\r_{e_{n+1}}$ to $a_{2,1}+a_{4,1}-A_{4,3}$
and the coefficient in front of $e_2$ in $[e_{n+1},e_{n+1}],$ which
we rename back by $a_{2,n+1}.$ Then we rename the entries in the $(2,1)^{st}$ positions in $\r_{e_{n+1}}$
and $\L_{e_{n+1}}$ by $a_{2,1}$.
\item The transformation $e^{\prime}_j=e_j,(1\leq j\leq n),e^{\prime}_{n+1}=e_{n+1}-\frac{a_{2,n+1}}{2a}e_2$ 
removes the coefficient $a_{2,n+1}$ in front of $e_2$ in $[e_{n+1},e_{n+1}]$ and we prove the result.
\end{itemize}
\end{enumerate}
\end{proof}
\allowdisplaybreaks
 \begin{theorem}\label{RL2(Change of Basis)} There are four solvable
indecomposable right Leibniz algebras up to isomorphism with a codimension one nilradical
$\mathcal{L}^2,(n\geq4),$ which are given below:
\begin{equation}
\begin{array}{l}
\displaystyle \nonumber (i)\,\,\, \g_{n+1,1}: [e_1,e_{n+1}]=e_1+(a-2)e_3,[e_2,e_{n+1}]=ae_2,
[e_{i},e_{n+1}]=\left(a+i-4\right)e_{i},\\
\displaystyle (3\leq i\leq n),[e_{n+1},e_1]=-e_1-(a-2)e_3,[e_{n+1},e_3]=(1-a)e_3,[e_{n+1},e_4]=e_2-ae_4,\\
\displaystyle [e_{n+1},e_j]=\left(4-j-a\right)e_j,(5\leq j\leq n);DS=[n+1,n,n-2,0],LS=[n+1,n,n,...],\\
\displaystyle  \nonumber(ii)\,\,\, \g_{n+1,2}: [e_1,e_{n+1}]=e_1-2e_3+\delta e_5,
[e_{i},e_{n+1}]=(i-4)e_{i},(3\leq i\leq n),\\
\displaystyle [e_{n+1},e_1]=-e_1+2e_3-\delta e_5,[e_{n+1},e_3]=e_3,[e_{n+1},e_4]=e_2,[e_{n+1},e_j]=(4-j)e_j,\\
\displaystyle (\delta=\pm1,5\leq j\leq n,n\geq5);DS=[n+1,n,n-2,0],LS=[n+1,n,n,...],\\
\displaystyle (iii)\,\,\,\g_{n+1,3}:
 [e_1,e_{n+1}]=e_1+(2-n)e_3,[e_2,e_{n+1}]=(4-n)e_2,
[e_{i},e_{n+1}]=(i-n)e_{i},\\
\displaystyle (3\leq i\leq n-1),[e_{n+1},e_{n+1}]=\delta e_n,[e_{n+1},e_1]=-e_1+(n-2)e_3,[e_{n+1},e_3]=(n-3)e_3,\\
\displaystyle [e_{n+1},e_4]=e_2+(n-4)e_4, [e_{n+1},e_j]=(n-j)e_j,(\delta=\pm1,5\leq j\leq n-1, n\geq5),\\
\displaystyle DS=[n+1,n,n-2,0],LS=[n+1,n,n,...],\\ 
\displaystyle (iv)\,\,\,\g_{n+1,4}: [e_1,e_{n+1}]=e_3,[e_2,e_{n+1}]=e_2,
[e_{i},e_{n+1}]=e_{i}+\epsilon e_{i+2}+\sum_{k=i+3}^n{b_{k-i-2}e_k},\\
\displaystyle [e_{n+1},e_1]=-e_3,[e_{n+1},e_i]=-e_i-\epsilon e_{i+2}-\sum_{k=i+3}^n{b_{k-i-2}e_k},(\epsilon=0,\pm1,3\leq i\leq n),\\  
\displaystyle DS=[n+1,n-1,0],LS=[n+1,n-1,n-1,...].
\end{array} 
\end{equation} 
\end{theorem}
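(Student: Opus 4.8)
The plan is to run the four families supplied by Theorem~\ref{TheoremRL2Absorption} through one further change of basis that fixes the nilradical $\mathcal{L}^2$, of which the admissible moves are compositions of a rescaling $e_{n+1}\mapsto\lambda e_{n+1}$ (multiplying every eigenvalue of $\r_{e_{n+1}}|_{\mathcal{L}^2}$ by $\lambda$), an automorphism of $\mathcal{L}^2$ acting on $e_1,\dots,e_n$, and a residual shift $e_{n+1}\mapsto e_{n+1}+\sum_k d^ke_k$ with the $d^k$ not yet used in the absorption. So the first step is to determine $\mathrm{Aut}(\mathcal{L}^2)$ from $(\ref{L2})$: it is generated by the dilation $e_1\mapsto te_1,\ e_2\mapsto t^2e_2,\ e_i\mapsto t^{\,i-2}e_i\ (i\ge3)$, by the degree-raising shift automorphisms $e_i\mapsto e_i+\sum_{j\ge1}c_je_{i+j}$, and by $e_1\mapsto e_1+\xi e_3$ coupled with $e_i\mapsto(1+\xi)e_i\ (i\ge2)$, each verified against the defining brackets. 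Since these act on $\r_{e_{n+1}}$ with limited freedom --- conjugation by this restricted group together with the nilpotent perturbations from the $e_{n+1}$-shifts --- the only true modulus beyond the single eigenvalue parameter $a$ or $b$ is a structure constant sitting at a spectral resonance of $\r_{e_{n+1}}$, and locating those resonances is the crux of the computation.

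Now normalise case by case. Rescale $e_{n+1}$ so that $a=1$ in cases (1), (2), (4) and $b=1$ in case~(3). In case~(1) the spectrum of $\r_{e_{n+1}}$ is $(1,b,b-1,b,b+1,\dots,b+n-4)$; the shifts clear $a_{2,1},b_{2,1},a_{2,3}$ and, because $a=1\neq0$, the whole tail $a_{5,3},\dots,a_{n,3}$, while $e_{n+1}\mapsto e_{n+1}+\mu e_2$ removes $a_{2,n+1}$ provided $b\neq0$; setting $\alpha:=b$ one obtains $\g_{n+1,1}$. When $b=0$ (which needs $n\ge5$, since for $n=4$ the vector $e_5$ does not lie in $\mathcal{L}^2$) the vectors $e_1$ and $e_5$ carry the common eigenvalue $1$: clearing $a_{5,3}$ by the shift-by-two automorphism re-expresses the forced term $-2e_3$ of $[e_1,e_{n+1}]$ and leaves there an $e_5$-component that cannot be absorbed, whose coefficient the dilation reduces to $\delta\in\{0,\pm1\}$ (the value $0$ recovering $\g_{n+1,1}$ at $\alpha=0$); this is $\g_{n+1,2}$. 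Case~(2) is the resonance $(n-4)a+b=0$, so the eigenvalue on $e_n$ vanishes and $a_{n,n+1}e_n$ in $[e_{n+1},e_{n+1}]$ survives; the residual scalings reduce it to $\delta\in\{0,\pm1\}$ (with $0$ giving $\g_{n+1,1}$ at $\alpha=4-n$), producing $\g_{n+1,3}$. In case~(3), $a=0$, so $e_1\notin[\g,\g]$; here the tail is only partly reducible, its leading coefficient normalising to $\epsilon\in\{0,\pm1\}$ and $a_{6,3},\dots,a_{n,3}$ persisting as the continuous moduli $b_1,\dots,b_{n-5}$, which gives $\g_{n+1,4}$. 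In case~(4) the coincidences on $\{e_1,e_3\}$ and $\{e_2,e_4\}$ are harmless after absorption, every constant is removable, and the algebra collapses to the $\alpha=2$ member of the $\g_{n+1,1}$ family. Hence the four cases exhaust precisely $\g_{n+1,1},\g_{n+1,2},\g_{n+1,3},\g_{n+1,4}$.

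It remains to check that these four are pairwise non-isomorphic, indecomposable, and have the stated characteristic series. Non-isomorphism: $\g_{n+1,4}$ is singled out by $DS=[n+1,n-1,0]$, i.e.\ $e_1\notin[\g,\g]$, against $DS=[n+1,n,n-2,0]$ for the others; within the $\g_{n+1,1}$ family distinct $\alpha$ give non-proportional $\r_{e_{n+1}}$-spectra, and the two spectral coincidences ($\g_{n+1,2}$ with $\g_{n+1,1}$ at $\alpha=0$, and $\g_{n+1,3}$ with $\g_{n+1,1}$ at $\alpha=4-n$) are resolved by the non-absorbable $\delta e_5$, respectively $\delta e_n$, term, whose non-vanishing is invariant, so the discrete parameters $\delta,\epsilon$ are essential. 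Indecomposability: a decomposition $\g=I_1\oplus I_2$ into ideals would, as $\dim(\g/\mathcal{L}^2)=1$, push one summand $I_2$ into $\mathcal{L}^2$, hence into $C(\mathcal{L}^2)=\langle e_2,e_n\rangle$ with $\r_{e_{n+1}}(I_2)=\L_{e_{n+1}}(I_2)=0$ and an ideal complement; but $e_2=[e_1,e_1]$, and in the cases with a $\delta e_n$ term also $e_n=\delta^{-1}[e_{n+1},e_{n+1}]$, lie in $[\g,\g]$ and cannot be split off, while $\L_{e_{n+1}}(e_n)\neq0$ in the remaining cases --- checking the explicit brackets kills every candidate. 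Finally $DS$ and $LS$ are read off: $[\g,\g]$ is $\mathcal{L}^2$ (or $\langle e_2,\dots,e_n\rangle$ for $\g_{n+1,4}$), its derived subalgebra is $\langle e_2,e_4,\dots,e_n\rangle$ (or $0$), and $LS$ stabilises because $\r_{e_{n+1}}$ is non-nilpotent on $\mathcal{L}^2$. The real obstacle I foresee is the very first step: pinning down $\mathrm{Aut}(\mathcal{L}^2)$ exactly and then tracking, through the cascading shift automorphisms, which of the many constants $a_{k,1},a_{k,3},b_{k,1},a_{k,n+1},\dots$ are genuinely free --- in particular confirming that the resonances $b=0$ and $(n-4)a+b=0$ (and no others) obstruct absorption and that the surviving $\delta,\epsilon$ admit no normalisation finer than $\pm1$ and $0,\pm1$; the remainder is systematic but lengthy linear bookkeeping, which in the paper is packaged into the preceding tables.
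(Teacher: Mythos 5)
Your proposal follows essentially the same route as the paper: starting from the four absorbed families of Theorem \ref{TheoremRL2Absorption}, you normalize by nilradical-preserving changes of basis and a rescaling of $e_{n+1}$, identify the spectral resonances $b=0$ (surviving $\delta e_5$ in $[e_1,e_{n+1}]$) and $b=(4-n)a$ (surviving $\delta e_n$ in $[e_{n+1},e_{n+1}]$) together with the degenerate case $a=0$ (surviving $\epsilon$ and the moduli $b_1,\dots,b_{n-5}$), and fold the generic and $b=2a$ cases into the one-parameter family $\g_{n+1,1}$ --- which is exactly the paper's organization, presented there as explicit matrix manipulations rather than in the language of $\mathrm{Aut}(\mathcal{L}^2)$ and resonances.

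One step fails as written. In case (1) you remove $a_{2,n+1}$ from $[e_{n+1},e_{n+1}]$ by $e_{n+1}\mapsto e_{n+1}+\mu e_2$, which uses $[e_2,e_{n+1}]+[e_{n+1},e_2]=be_2$ and therefore requires $b\neq0$; but the subcase $b=0$ lies inside case (1) and is precisely the one producing $\g_{n+1,2}$, where no $[e_{n+1},e_{n+1}]$ term survives, so the removal must still be justified there. The paper instead shifts by $e_4$, exploiting $[e_4,e_{n+1}]+[e_{n+1},e_4]=ae_2$ with $a\neq0$, which covers $b=0$ as well; substituting that move repairs the argument. Beyond this, your write-up supplies material the paper leaves implicit --- the generators of $\mathrm{Aut}(\mathcal{L}^2)$, the pairwise non-isomorphism via the $\r_{e_{n+1}}$-spectrum and the invariance of the non-vanishing resonant terms, and the indecomposability check --- which is a genuine strengthening, though the completeness of your claimed generating set for $\mathrm{Aut}(\mathcal{L}^2)$ and the assertion that distinct values of the parameter $a$ in $\g_{n+1,1}$ are never identified would each need the same kind of verification you defer to ``bookkeeping.''
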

\vskip 20pt    
\begin{proof}
One applies the change of basis transformations keeping the nilradical $\mathcal{L}^2$ given in $(\ref{L2})$ unchanged.
\begin{enumerate}[noitemsep, topsep=1pt]
\allowdisplaybreaks
\item[(1)] Suppose $a\neq0,b\neq2a,(n=4)$ and $b\neq(4-n)a,a\neq0,b\neq2a,(n\geq5).$ We have the right (a derivation) and the left (not a derivation) multiplication operators restricted to the nilradical: 
$$\r_{e_{n+1}}=\left[\begin{smallmatrix}
 a & 0 & 0 & 0&0&0&\cdots && 0&0 & 0\\
  a_{2,1}+a_{4,1}-A_{4,3} & b & a_{2,3}& 0 &0&0 & \cdots &&0  & 0& 0\\
  -2a+b & 0 & -a+b & 0 & 0&0 &\cdots &&0 &0& 0\\
  0 & 0 &  0 & b &0 &0 &\cdots&&0 &0 & 0\\
 0 & 0 & a_{5,3} & 0 & a+b  &0&\cdots & &0&0 & 0\\
  0 & 0 &\boldsymbol{\cdot} & a_{5,3} & 0 &2a+b&\cdots & &0&0 & 0\\
    0 & 0 &\boldsymbol{\cdot} & \boldsymbol{\cdot} & \ddots &0&\ddots &&\vdots&\vdots &\vdots\\
  \vdots & \vdots & \vdots &\vdots &  &\ddots&\ddots &\ddots &\vdots&\vdots & \vdots\\
 0 & 0 & a_{n-2,3}& a_{n-3,3}& \cdots&\cdots &a_{5,3}&0&(n-6)a+b &0& 0\\
0 & 0 & a_{n-1,3}& a_{n-2,3}& \cdots&\cdots &\boldsymbol{\cdot}&a_{5,3}&0 &(n-5)a+b& 0\\
 0 & 0 & a_{n,3}& a_{n-1,3}& \cdots&\cdots &\boldsymbol{\cdot}&\boldsymbol{\cdot}&a_{5,3} &0& (n-4)a+b
\end{smallmatrix}\right],$$
 $$\L_{e_{n+1}}=\left[\begin{smallmatrix}
 -a & 0 & 0 & 0&0&0&\cdots && 0&0 & 0\\
  b_{2,1}-A_{4,3} & 0 & \frac{(b-a)a_{2,3}}{a}& a &0&0 & \cdots &&0  & 0& 0\\
  2a-b & 0 & a-b & 0 & 0&0 &\cdots &&0 &0& 0\\
  0 & 0 &  0 & -b &0 &0 &\cdots&&0 &0 & 0\\
 0 & 0 & -a_{5,3} & 0 & -a-b  &0&\cdots & &0&0 & 0\\
  0 & 0 &\boldsymbol{\cdot} & -a_{5,3} & 0 &-2a-b&\cdots & &0&0 & 0\\
    0 & 0 &\boldsymbol{\cdot} & \boldsymbol{\cdot} & \ddots &0&\ddots &&\vdots&\vdots &\vdots\\
  \vdots & \vdots & \vdots &\vdots &  &\ddots&\ddots &\ddots &\vdots&\vdots & \vdots\\
 0 & 0 & -a_{n-2,3}& -a_{n-3,3}& \cdots&\cdots &-a_{5,3}&0&(6-n)a-b &0& 0\\
0 & 0 & -a_{n-1,3}& -a_{n-2,3}& \cdots&\cdots &\boldsymbol{\cdot}&-a_{5,3}&0 &(5-n)a-b& 0\\
 0 & 0 & -a_{n,3}& -a_{n-1,3}& \cdots&\cdots &\boldsymbol{\cdot}&\boldsymbol{\cdot}&-a_{5,3} &0& (4-n)a-b
\end{smallmatrix}\right].$$
\begin{itemize}[noitemsep, topsep=0pt]
\allowdisplaybreaks 
\item We apply the transformation $e^{\prime}_1=e_1,e^{\prime}_2=e_2,e^{\prime}_i=e_i-\frac{a_{k-i+3,3}}{(k-i)a}e_k,(3\leq i\leq n-2,
i+2\leq k\leq n,n\geq5),e^{\prime}_{j}=e_{j},(n-1\leq j\leq n+1),$ where $k$ is fixed, renaming all the affected entries back.
This transformation removes $a_{5,3},a_{6,3},...,a_{n,3}$ in $\r_{e_{n+1}}$ and $-a_{5,3},-a_{6,3},...,-a_{n,3}$ in $\L_{e_{n+1}}.$
Besides it introduces the entries in the $(5,1)^{st},(6,1)^{st},...,(n,1)^{st}$ positions in $\r_{e_{n+1}}$ and $\L_{e_{n+1}},$ 
which we call by $a_{5,1},a_{6,1},...,a_{n,1}$ and $-a_{5,1},-a_{6,1},...,-a_{n,1},$ respectively.
\item Applying the transformation $e^{\prime}_1=e_1-\frac{1}{a}\left(b_{2,1}-A_{4,3}+\frac{(2a-b)a_{2,3}}{a}\right)e_2,e^{\prime}_2=e_2,
e^{\prime}_3=e_3-\frac{a_{2,3}}{a}e_2,e^{\prime}_{i}=e_{i},
(4\leq i\leq n,n\geq4),e^{\prime}_{n+1}=e_{n+1}-\frac{a_{2,n+1}}{a}e_4+\sum_{k=5}^{n-1}a_{k+1,1}e_{k},$
we
remove $a_{2,1}+a_{4,1}-A_{4,3}$ and $b_{2,1}-A_{4,3}$ from the $(2,1)^{st}$ positions in $\r_{e_{n+1}}$ and $\L_{e_{n+1}},$
respectively. It also removes $a_{2,3}$ and $\frac{(b-a)a_{2,3}}{a}$
from the entries in the $(2,3)^{rd}$ positions in $\r_{e_{n+1}}$ and $\L_{e_{n+1}};$ 
$a_{k+1,1}$ in $\r_{e_{n+1}}$ and $-a_{k+1,1}$ in $\L_{e_{n+1}}$ from the entries in the $(k+1,1)^{st},$ $(5\leq k\leq n-1)$
positions and the coefficient $a_{2,n+1}$ in front of $e_2$ in $[e_{n+1},e_{n+1}]$.
This transformation affects the entries in the $(5,1)^{st}$ positions in $\r_{e_{n+1}}$ and $\L_{e_{n+1}},$
which we change back.
\item Then we scale $a$ to unity applying the transformation $e^{\prime}_i=e_i,(1\leq i\leq n,n\geq4),e^{\prime}_{n+1}=\frac{e_{n+1}}{a}.$ 
Renaming $\frac{b}{a}$ by $b$ and $\frac{a_{5,1}}{a}$ by $c,$ we obtain a continuous family of Leibniz algebras, where $c=0$ when $n=4:$
\begin{equation}
\left\{
\begin{array}{l}
\displaystyle  \nonumber [e_1,e_{n+1}]=e_1+(b-2)e_3+ce_5,[e_2,e_{n+1}]=be_2,[e_{i},e_{n+1}]=\left(b+i-4\right)e_{i},(3\leq i\leq n),\\
\displaystyle[e_{n+1},e_1]=-e_1+(2-b)e_3-ce_5, [e_{n+1},e_3]=(1-b)e_3,[e_{n+1},e_4]=e_2-be_4,\\
\displaystyle [e_{n+1},e_j]=\left(4-j-b\right)e_j,(5\leq j\leq n,n\geq4),\\
\displaystyle (b\neq2,n=4\,\,or\,\,b\neq2,b\neq4-n,n\geq5).
\end{array} 
\right.
\end{equation} 
Then we have the following two cases:

(I) If $b\neq0,(n\geq5),$ then we apply the transformation
$e^{\prime}_1=e_1-\frac{c}{b}e_5,e^{\prime}_i=e_i,(2\leq i\leq n+1)$
to remove $c,-c$
from the $(5,1)^{st}$ positions in $\r_{e_{n+1}}$ and $\L_{e_{n+1}},$ respectively. We have the following continuous family of Leibniz
algebras:
\allowdisplaybreaks
\begin{equation}
\begin{array}{l}
\displaystyle  [e_1,e_{n+1}]=e_1+(b-2)e_3,[e_2,e_{n+1}]=be_2,
[e_{i},e_{n+1}]=\left(b+i-4\right)e_{i},\\
\displaystyle (3\leq i\leq n),[e_{n+1},e_1]=-e_1-(b-2)e_3,[e_{n+1},e_3]=(1-b)e_3,\\
\displaystyle [e_{n+1},e_4]=e_2-be_4, [e_{n+1},e_j]=\left(4-j-b\right)e_j,(5\leq j\leq n,n\geq4),\\
\displaystyle(b\neq0, b\neq2,n=4\,\,or\,\,b\neq0,b\neq2,b\neq4-n,n\geq5). \end{array} 
\label{g(L2(c=0))}
\end{equation} 
(II) Suppose $b=0.$ If $c=0,$ then we have a limiting case of (\ref{g(L2(c=0))}) with $b=0,(n\geq4).$
If $c\neq0$ in particular greater than zero or less than zero, respectively,
 then we apply the transformation
$e^{\prime}_1=\sqrt{\pm c}e_1,e^{\prime}_2=\pm ce_2,e^{\prime}_i=\left(\pm c\right)^{\frac{i-2}{2}}e_i,(3\leq i\leq n, n\geq5),e^{\prime}_{n+1}=e_{n+1}$
to scale $c$ to $\pm1$ and we obtain a Leibniz algebra $\g_{n+1,2}$ given below:
\begin{equation}
\begin{array}{l}
\displaystyle \nonumber [e_1,e_{n+1}]=e_1-2e_3+\delta e_5,
[e_{i},e_{n+1}]=(i-4)e_{i},(3\leq i\leq n),[e_{n+1},e_1]=-e_1+2e_3-\delta e_5,\\
\displaystyle[e_{n+1},e_3]=e_3,[e_{n+1},e_4]=e_2,[e_{n+1},e_j]=(4-j)e_j, (\delta=\pm1,5\leq j\leq n,n\geq5). 
\end{array} 
\end{equation}  
\end{itemize}
\item[(2)] Suppose $b:=(4-n)a,a\neq0,(n\geq5).$ We have the right (a derivation) and the left (not a derivation) multiplication operators restricted to the nilradical: 
$$\r_{e_{n+1}}=\left[\begin{smallmatrix}
 a & 0 & 0 & 0&0&0&\cdots && 0&0 & 0\\
  a_{2,1}+a_{4,1}-A_{4,3} & (4-n)a & a_{2,3}& 0 &0&0 & \cdots &&0  & 0& 0\\
(2-n)a & 0 & (3-n)a & 0 & 0&0 &\cdots &&0 &0& 0\\
  0 & 0 &  0 & (4-n)a &0 &0 &\cdots&&0 &0 & 0\\
 0 & 0 & a_{5,3} & 0 & (5-n)a  &0&\cdots & &0&0 & 0\\
  0 & 0 &\boldsymbol{\cdot} & a_{5,3} & 0 &(6-n)a&\cdots & &0&0 & 0\\
    0 & 0 &\boldsymbol{\cdot} & \boldsymbol{\cdot} & \ddots &0&\ddots &&\vdots&\vdots &\vdots\\
  \vdots & \vdots & \vdots &\vdots &  &\ddots&\ddots &\ddots &\vdots&\vdots & \vdots\\
 0 & 0 & a_{n-2,3}& a_{n-3,3}& \cdots&\cdots &a_{5,3}&0&-2a &0& 0\\
0 & 0 & a_{n-1,3}& a_{n-2,3}& \cdots&\cdots &\boldsymbol{\cdot}&a_{5,3}&0 &-a& 0\\
 0 & 0 & a_{n,3}& a_{n-1,3}& \cdots&\cdots &\boldsymbol{\cdot}&\boldsymbol{\cdot}&a_{5,3} &0&0
\end{smallmatrix}\right],$$
 $$\L_{e_{n+1}}=\left[\begin{smallmatrix}
 -a & 0 & 0 & 0&0&0&\cdots && 0&0 & 0\\
  b_{2,1}-A_{4,3} & 0 & (3-n)a_{2,3}& a &0&0 & \cdots &&0  & 0& 0\\
 (n-2)a & 0 & (n-3)a & 0 & 0&0 &\cdots &&0 &0& 0\\
  0 & 0 &  0 & (n-4)a &0 &0 &\cdots&&0 &0 & 0\\
 0 & 0 & -a_{5,3} & 0 & (n-5)a  &0&\cdots & &0&0 & 0\\
  0 & 0 &\boldsymbol{\cdot} & -a_{5,3} & 0 &(n-6)a&\cdots & &0&0 & 0\\
    0 & 0 &\boldsymbol{\cdot} & \boldsymbol{\cdot} & \ddots &0&\ddots &&\vdots&\vdots &\vdots\\
  \vdots & \vdots & \vdots &\vdots &  &\ddots&\ddots &\ddots &\vdots&\vdots & \vdots\\
 0 & 0 & -a_{n-2,3}& -a_{n-3,3}& \cdots&\cdots &-a_{5,3}&0&2a &0& 0\\
0 & 0 & -a_{n-1,3}& -a_{n-2,3}& \cdots&\cdots &\boldsymbol{\cdot}&-a_{5,3}&0 &a& 0\\
 0 & 0 & -a_{n,3}& -a_{n-1,3}& \cdots&\cdots &\boldsymbol{\cdot}&\boldsymbol{\cdot}&-a_{5,3} &0& 0
\end{smallmatrix}\right].$$
\begin{itemize}[noitemsep, topsep=0pt]
\allowdisplaybreaks 
\item We apply the transformation $e^{\prime}_1=e_1,e^{\prime}_2=e_2,e^{\prime}_i=e_i-\frac{a_{k-i+3,3}}{(k-i)a}e_k,(3\leq i\leq n-2,
i+2\leq k\leq n),e^{\prime}_{j}=e_{j},(n-1\leq j\leq n+1),$ where $k$ is fixed, renaming all the affected entries the way they were.
This transformation removes $a_{5,3},a_{6,3},...,a_{n,3}$ in $\r_{e_{n+1}}$ and $-a_{5,3},-a_{6,3},...,-a_{n,3}$ in $\L_{e_{n+1}}.$
Besides it introduces the entries in the $(5,1)^{st},(6,1)^{st},...,(n,1)^{st}$ positions in $\r_{e_{n+1}}$ and $\L_{e_{n+1}},$ 
which we call by $a_{5,1},a_{6,1},...,a_{n,1}$ and $-a_{5,1},-a_{6,1},...,-a_{n,1},$ respectively.
\item The transformation $e^{\prime}_1=e_1-\frac{1}{a}\left(b_{2,1}-A_{4,3}+(n-2)a_{2,3}\right)e_2,e^{\prime}_2=e_2,
e^{\prime}_3=e_3-\frac{a_{2,3}}{a}e_2,e^{\prime}_{i}=e_{i},
(4\leq i\leq n),e^{\prime}_{n+1}=e_{n+1}+\sum_{k=5}^{n-1}a_{k+1,1}e_{k}$
removes $a_{2,1}+a_{4,1}-A_{4,3}$ and $b_{2,1}-A_{4,3}$ from the $(2,1)^{st}$ positions in $\r_{e_{n+1}}$ and $\L_{e_{n+1}},$
respectively. It also removes $a_{2,3}$ and $(3-n)a_{2,3}$
from the entries in the $(2,3)^{rd}$ positions in $\r_{e_{n+1}}$ and $\L_{e_{n+1}};$ 
$a_{k+1,1}$ in $\r_{e_{n+1}}$ and $-a_{k+1,1}$ in $\L_{e_{n+1}}$ from the entries in the $(k+1,1)^{st}$
positions, where $(5\leq k\leq n-1)$ keeping other entries unchanged.
\item Applying the transformation $e^{\prime}_i=e_i,(1\leq i\leq n),e^{\prime}_{n+1}=e_{n+1}+\frac{a_{5,1}}{n-4}e_2+a_{5,1}e_4,$
we remove $a_{5,1}$ and $-a_{5,1}$ from the $(5,1)^{st}$ positions in $\r_{e_{n+1}}$ and
$\L_{e_{n+1}}$, respectively. Therefore we have a continuous family
of Leibniz algebras:
\begin{equation}
\begin{array}{l}
\displaystyle   \nonumber[e_1,e_{n+1}]=ae_1+(2-n)ae_3,[e_2,e_{n+1}]=(4-n)ae_2,
[e_{i},e_{n+1}]=(i-n)ae_{i},(3\leq i\leq n-1),\\
\displaystyle [e_{n+1},e_{n+1}]=a_{n,n+1}e_n,[e_{n+1},e_1]=-ae_1+(n-2)ae_3,[e_{n+1},e_3]=(n-3)ae_3,\\
\displaystyle [e_{n+1},e_4]=ae_2+(n-4)ae_4, [e_{n+1},e_j]=(n-j)ae_j,(5\leq j\leq n-1). \end{array} 
\end{equation} 
\item To scale $a$ to unity, we apply the transformation $e^{\prime}_i=e_i,(1\leq i\leq n),e^{\prime}_{n+1}=\frac{e_{n+1}}{a}$ 
renaming the coefficient in front of $e_n$ in $[e_{n+1},e_{n+1}]$ back by $a_{n,n+1}.$ We obtain a Leibniz algebra
\allowdisplaybreaks
\begin{equation}
\begin{array}{l}
\displaystyle   \nonumber[e_1,e_{n+1}]=e_1+(2-n)e_3,[e_2,e_{n+1}]=(4-n)e_2,
[e_{i},e_{n+1}]=(i-n)e_{i},(3\leq i\leq n-1),\\
\displaystyle [e_{n+1},e_{n+1}]=a_{n,n+1}e_n,[e_{n+1},e_1]=-e_1+(n-2)e_3,[e_{n+1},e_3]=(n-3)e_3,\\
\displaystyle [e_{n+1},e_4]=e_2+(n-4)e_4, [e_{n+1},e_j]=(n-j)e_j,(5\leq j\leq n-1). \end{array} 
\end{equation} 
 \end{itemize}
If $a_{n,n+1}=0,$ then we have a limiting case of (\ref{g(L2(c=0))}) with $b=4-n,(n\geq5)$. If $a_{n,n+1}\neq0$ in particular greater than zero or less than zero, respectively,
 then we apply the transformation
$e^{\prime}_i=\left(\pm a_{n,n+1}\right)^{\frac{i}{n-2}}e_i,(1\leq i\leq 2),e^{\prime}_k=\left(\pm a_{n,n+1}\right)^{\frac{k-2}{n-2}}e_k,(3\leq k\leq n),
e^{\prime}_{n+1}=e_{n+1}$
to scale $a_{n,n+1}$ to $\pm1$. We have the algebra $\g_{n+1,3}$ given below:
\begin{equation}
\begin{array}{l}
\displaystyle   \nonumber[e_1,e_{n+1}]=e_1+(2-n)e_3,[e_2,e_{n+1}]=(4-n)e_2,
[e_{i},e_{n+1}]=(i-n)e_{i},(3\leq i\leq n-1),\\
\displaystyle [e_{n+1},e_{n+1}]=\delta e_n,[e_{n+1},e_1]=-e_1+(n-2)e_3,[e_{n+1},e_3]=(n-3)e_3,\\
\displaystyle [e_{n+1},e_4]=e_2+(n-4)e_4, [e_{n+1},e_j]=(n-j)e_j,(\delta=\pm1,5\leq j\leq n-1, n\geq5). \end{array} 
\label{g_{n+1,3}}
\end{equation} 
\item[(3)] Suppose $a=0$ and $b\neq0,(n\geq4).$ We have the right (a derivation) and the left (not a derivation) multiplication operators restricted to the nilradical: 
$$\r_{e_{n+1}}=\left[\begin{smallmatrix}
 0 & 0 & 0 & 0&0&0&\cdots && 0&0 & 0\\
  a_{2,1} & b & 0& 0 &0&0 & \cdots &&0  & 0& 0\\
  b & 0 & b & 0 & 0&0 &\cdots &&0 &0& 0\\
  0 & 0 &  0 & b &0 &0 &\cdots&&0 &0 & 0\\
 0 & 0 & a_{5,3} & 0 & b  &0&\cdots & &0&0 & 0\\
  0 & 0 &\boldsymbol{\cdot} & a_{5,3} & 0 &b&\cdots & &0&0 & 0\\
    0 & 0 &\boldsymbol{\cdot} & \boldsymbol{\cdot} & \ddots &0&\ddots &&\vdots&\vdots &\vdots\\
  \vdots & \vdots & \vdots &\vdots &  &\ddots&\ddots &\ddots &\vdots&\vdots & \vdots\\
 0 & 0 & a_{n-2,3}& a_{n-3,3}& \cdots&\cdots &a_{5,3}&0&b &0& 0\\
0 & 0 & a_{n-1,3}& a_{n-2,3}& \cdots&\cdots &\boldsymbol{\cdot}&a_{5,3}&0 &b& 0\\
 0 & 0 & a_{n,3}& a_{n-1,3}& \cdots&\cdots &\boldsymbol{\cdot}&\boldsymbol{\cdot}&a_{5,3} &0& b
\end{smallmatrix}\right],$$
 $$\L_{e_{n+1}}=\left[\begin{smallmatrix}
 0 & 0 & 0 & 0&0&0&\cdots && 0&0 & 0\\
  b_{2,1} & 0 & b_{2,1}& 0 &0&0 & \cdots &&0  & 0& 0\\
  -b & 0 & -b & 0 & 0&0 &\cdots &&0 &0& 0\\
  0 & 0 &  0 & -b &0 &0 &\cdots&&0 &0 & 0\\
 0 & 0 & -a_{5,3} & 0 & -b  &0&\cdots & &0&0 & 0\\
  0 & 0 &\boldsymbol{\cdot} & -a_{5,3} & 0 &-b&\cdots & &0&0 & 0\\
    0 & 0 &\boldsymbol{\cdot} & \boldsymbol{\cdot} & \ddots &0&\ddots &&\vdots&\vdots &\vdots\\
  \vdots & \vdots & \vdots &\vdots &  &\ddots&\ddots &\ddots &\vdots&\vdots & \vdots\\
 0 & 0 & -a_{n-2,3}& -a_{n-3,3}& \cdots&\cdots &-a_{5,3}&0&-b &0& 0\\
0 & 0 & -a_{n-1,3}& -a_{n-2,3}& \cdots&\cdots &\boldsymbol{\cdot}&-a_{5,3}&0 &-b& 0\\
 0 & 0 & -a_{n,3}& -a_{n-1,3}& \cdots&\cdots &\boldsymbol{\cdot}&\boldsymbol{\cdot}&-a_{5,3} &0&-b
\end{smallmatrix}\right].$$
\begin{itemize}[noitemsep, topsep=0pt]
\allowdisplaybreaks 
\item Applying the transformation $e^{\prime}_1=e_1-\frac{a_{2,1}+b_{2,1}}{b}e_2,e^{\prime}_2=e_2,
e^{\prime}_3=e_3-\frac{b_{2,1}}{b}e_2,e^{\prime}_{i}=e_{i},
(4\leq i\leq n+1),$
we
remove $a_{2,1}$ and $b_{2,1}$ from the $(2,1)^{st}$ positions in $\r_{e_{n+1}}$ and $\L_{e_{n+1}},$
respectively. It also removes $b_{2,1}$
from the entry in the $(2,3)^{rd}$ position in $\L_{e_{n+1}}$ keeping other entries unchanged.
 Therefore we have a continuous family
of Leibniz algebras given below:
\begin{equation}
\begin{array}{l}
\displaystyle   \nonumber[e_1,e_{n+1}]=be_3,[e_2,e_{n+1}]=be_2,
[e_{i},e_{n+1}]=be_{i}+\sum_{k=i+2}^n{a_{k-i+3,3}e_k},\\
\displaystyle [e_{n+1},e_1]=-be_3,[e_{n+1},e_i]=-be_i-\sum_{k=i+2}^n{a_{k-i+3,3}e_k},(3\leq i\leq n,n\geq4).\end{array} 
\end{equation} 
\item To scale $b$ to unity, we apply the transformation $e^{\prime}_i=e_i,(1\leq i\leq n),e^{\prime}_{n+1}=\frac{e_{n+1}}{b}.$ 
Then we rename $\frac{a_{5,3}}{b},\frac{a_{6,3}}{b},...,\frac{a_{n,3}}{b}$ by $a_{5,3},a_{6,3},...,a_{n,3},$ respectively.
We obtain a Leibniz algebra:
\begin{equation}
\begin{array}{l}
\displaystyle   \nonumber[e_1,e_{n+1}]=e_3,[e_2,e_{n+1}]=e_2,
[e_{i},e_{n+1}]=e_{i}+\sum_{k=i+2}^n{a_{k-i+3,3}e_k},\\
\displaystyle [e_{n+1},e_1]=-e_3,[e_{n+1},e_i]=-e_i-\sum_{k=i+2}^n{a_{k-i+3,3}e_k},(3\leq i\leq n,n\geq4).\end{array} 
\end{equation}
If $a_{5,3}\neq0,(n\geq5),$ precisely, greater than zero and less than zero, respectively, then applying
the transformation $e^{\prime}_i=\left(\pm a_{5,3}\right)^{\frac{i}{2}}e_i,(1\leq i\leq 2),e^{\prime}_k=\left(\pm a_{5,3}\right)^{\frac{k-2}{2}}e_k,
(3\leq k\leq n),e^{\prime}_{n+1}=e_{n+1},$ we scale it to $\pm1.$ We also rename all the affected entries back
and then we rename $a_{6,3},...,a_{n,3}$ by $b_1,...,b_{n-5},$ respectively.
We combine with the case, when $a_{5,3}=0$ and obtain a Leibniz algebra $\g_{n+1,4}$ given below:
\begin{equation}
\begin{array}{l}
\displaystyle   \nonumber[e_1,e_{n+1}]=e_3,[e_2,e_{n+1}]=e_2,
[e_{i},e_{n+1}]=e_{i}+\epsilon e_{i+2}+\sum_{k=i+3}^n{b_{k-i-2}e_k},[e_{n+1},e_1]=-e_3,\\
\displaystyle [e_{n+1},e_i]=-e_i-\epsilon e_{i+2}-\sum_{k=i+3}^n{b_{k-i-2}e_k},(\epsilon=0,\pm1,3\leq i\leq n,n\geq4).\end{array} 
\end{equation}
\end{itemize}
\item[(4)] Suppose $b:=2a,a\neq0,(n\geq4).$ We have the right (a derivation) and the left (not a derivation) multiplication operators restricted to the nilradical: 
$$\r_{e_{n+1}}=\left[\begin{smallmatrix}
 a & 0 & 0 & 0&0&0&\cdots && 0&0 & 0\\
  a_{2,1} & 2a & a_{2,3}& 0 &0&0 & \cdots &&0  & 0& 0\\
  0 & 0 & a & 0 & 0&0 &\cdots &&0 &0& 0\\
  0 & 0 &  0 & 2a &0 &0 &\cdots&&0 &0 & 0\\
 0 & 0 & a_{5,3} & 0 & 3a  &0&\cdots & &0&0 & 0\\
  0 & 0 &\boldsymbol{\cdot} & a_{5,3} & 0 &4a&\cdots & &0&0 & 0\\
    0 & 0 &\boldsymbol{\cdot} & \boldsymbol{\cdot} & \ddots &0&\ddots &&\vdots&\vdots &\vdots\\
  \vdots & \vdots & \vdots &\vdots &  &\ddots&\ddots &\ddots &\vdots&\vdots & \vdots\\
 0 & 0 & a_{n-2,3}& a_{n-3,3}& \cdots&\cdots &a_{5,3}&0&(n-4)a &0& 0\\
0 & 0 & a_{n-1,3}& a_{n-2,3}& \cdots&\cdots &\boldsymbol{\cdot}&a_{5,3}&0 &(n-3)a& 0\\
 0 & 0 & a_{n,3}& a_{n-1,3}& \cdots&\cdots &\boldsymbol{\cdot}&\boldsymbol{\cdot}&a_{5,3} &0& (n-2)a
\end{smallmatrix}\right],$$
 $$\L_{e_{n+1}}=\left[\begin{smallmatrix}
 -a & 0 & 0 & 0&0&0&\cdots && 0&0 & 0\\
  a_{2,1} & 0 & a_{2,3}& a &0&0 & \cdots &&0  & 0& 0\\
  0 & 0 & -a & 0 & 0&0 &\cdots &&0 &0& 0\\
  0 & 0 &  0 & -2a &0 &0 &\cdots&&0 &0 & 0\\
 0 & 0 & -a_{5,3} & 0 & -3a  &0&\cdots & &0&0 & 0\\
  0 & 0 &\boldsymbol{\cdot} & -a_{5,3} & 0 &-4a&\cdots & &0&0 & 0\\
    0 & 0 &\boldsymbol{\cdot} & \boldsymbol{\cdot} & \ddots &0&\ddots &&\vdots&\vdots &\vdots\\
  \vdots & \vdots & \vdots &\vdots &  &\ddots&\ddots &\ddots &\vdots&\vdots & \vdots\\
 0 & 0 & -a_{n-2,3}& -a_{n-3,3}& \cdots&\cdots &-a_{5,3}&0&(4-n)a &0& 0\\
0 & 0 & -a_{n-1,3}& -a_{n-2,3}& \cdots&\cdots &\boldsymbol{\cdot}&-a_{5,3}&0 &(3-n)a& 0\\
 0 & 0 & -a_{n,3}& -a_{n-1,3}& \cdots&\cdots &\boldsymbol{\cdot}&\boldsymbol{\cdot}&-a_{5,3} &0& (2-n)a
\end{smallmatrix}\right].$$
\begin{itemize}[noitemsep, topsep=0pt]
\allowdisplaybreaks 
\item We apply the transformation $e^{\prime}_1=e_1,e^{\prime}_2=e_2,e^{\prime}_i=e_i-\frac{a_{k-i+3,3}}{(k-i)a}e_k,(3\leq i\leq n-2,
i+2\leq k\leq n,n\geq5),e^{\prime}_{j}=e_{j},(n-1\leq j\leq n+1),$ where $k$ is fixed renaming all the affected entries back.
This transformation removes $a_{5,3},a_{6,3},...,a_{n,3}$ and $-a_{5,3},-a_{6,3},...,-a_{n,3}$ in $\r_{e_{n+1}}$ and $\L_{e_{n+1}},$
respectively.
It does not affect any other entries.
\item Applying the transformation $e^{\prime}_1=e_1-\frac{a_{2,1}}{a}e_2,e^{\prime}_2=e_2,
e^{\prime}_3=e_3-\frac{a_{2,3}}{a}e_2,e^{\prime}_{i}=e_{i},
(4\leq i\leq n+1),$
we
remove $a_{2,1}$ from the $(2,1)^{st}$ positions in $\r_{e_{n+1}}$ and $\L_{e_{n+1}},$
$a_{2,3}$ 
from the entries in the $(2,3)^{rd}$ positions in $\r_{e_{n+1}}$ and $\L_{e_{n+1}}$ keeping other entries unchanged.
\item The transformation $e^{\prime}_i=e_i,(1\leq i\leq n),e^{\prime}_{n+1}=\frac{e_{n+1}}{a}$ scales $a$ to unity.
We obtain the following algebra:
\begin{equation}
\begin{array}{l}
\displaystyle  \nonumber [e_1,e_{n+1}]=e_1,[e_2,e_{n+1}]=2e_2,[e_{i},e_{n+1}]=\left(i-2\right)e_{i},(3\leq i\leq n),[e_{n+1},e_1]=-e_1,\\
\displaystyle [e_{n+1},e_3]=-e_3,[e_{n+1},e_4]=e_2-2e_4,[e_{n+1},e_j]=\left(2-j\right)e_j,(5\leq j\leq n),
\end{array} 
\end{equation} 
which is a limiting case of (\ref{g(L2(c=0))}) with $b=2,(n\geq4).$ Altogether (\ref{g(L2(c=0))}) and all its limiting
cases after replacing $b$ with $a$ give us a Leibniz algebra $\g_{n+1,1}$ given below:
\begin{equation}
\begin{array}{l}
\displaystyle \nonumber [e_1,e_{n+1}]=e_1+(a-2)e_3,[e_2,e_{n+1}]=ae_2,
[e_{i},e_{n+1}]=\left(a+i-4\right)e_{i},\\
\displaystyle (3\leq i\leq n),[e_{n+1},e_1]=-e_1-(a-2)e_3,[e_{n+1},e_3]=(1-a)e_3,\\
\displaystyle [e_{n+1},e_4]=e_2-ae_4, [e_{n+1},e_j]=\left(4-j-a\right)e_j,(5\leq j\leq n,n\geq4). \end{array} 
\end{equation} 
\end{itemize}
 \end{enumerate}
\end{proof}

\subsubsection{Two dimensional right solvable extensions of $\mathcal{L}^2$}\label{Section5.1.2}
The non-zero inner derivations of $\mathcal{L}^2,(n\geq4)$ are
given by
 \[
\r_{e_1}=\left[\begin{smallmatrix}
0&0 & 0 & 0 & \cdots & 0 & 0  & 0 \\
1&0 & 0 & 0 & \cdots & 0 & 0  & 0 \\
0&0 & 0 & 0 & \cdots & 0 & 0 & 0 \\
 0&0 & 1 & 0 & \cdots & 0 & 0 & 0 \\
0& 0 & 0 & 1 & \cdots & 0 & 0 & 0 \\
\vdots& \vdots  & \vdots  & \vdots  & \ddots & \vdots & \vdots & \vdots\\
 0& 0 & 0 & 0&\cdots & 1 & 0 &0\\
  0& 0 & 0 & 0&\cdots & 0 & 1 &0
\end{smallmatrix}\right],\r_{e_3}=E_{2,1}-E_{4,1},
\r_{e_i}=-E_{i+1,1}=\left[\begin{smallmatrix} 0 & 0&0&\cdots &  0 \\
 0& 0&0&\cdots &  0 \\
  0 & 0&0&\cdots &  0 \\
    0 & 0&0&\cdots &  0 \\
 \vdots &\vdots &\vdots& & \vdots\\
 -1 &0&0& \cdots & 0\\
  \vdots &\vdots &\vdots& & \vdots\\
  \boldsymbol{\cdot} & 0&0&\cdots &  0
 \end{smallmatrix}\right]\,(4\leq i\leq n-1),\] where $E_{2,1}, E_{4,1}$ and $E_{i+1,1}$ are $n\times n$ matrices that has $1$ in the 
 $(2,1)^{st},(4,1)^{st}$ and $(i+1,1)^{st}$
positions, respectively, and all other entries are zero.

One could notice that two outer derivations $\r_{e_{n+1}}$ and $\r_{e_{n+2}}$ are ``nil-independent'' (see Section \ref{Pr}.) only in case $(1)$
in Theorem \ref{TheoremRL2Absorption}.
 \begin{remark} \label{TwoOuterDerivations}
If we have three or more outer derivations, then they are
``nil-dependent''(see Section \ref{Pr}.).
Therefore the solvable algebras we are constructing are of codimension at most two.
\end{remark}

By taking a linear combination of $\r_{e_{n+1}}$ and $\r_{e_{n+2}}$ and keeping in mind that no nontrivial linear combination of the matrices $\r_{e_{n+1}}$ and $\r_{e_{n+2}}$ 
can be a nilpotent matrix, one could set $\left(
\begin{array}{c}
  a \\
 b
\end{array}\right)=\left(
\begin{array}{c}
  1\\
 0
\end{array}\right)$ and $\left(
\begin{array}{c}
 \alpha \\
 \beta
\end{array}\right)=\left(
\begin{array}{c}
  1\\
 1
\end{array}\right).$
Therefore the vector space of outer derivations as the $n \times n$ matrices is as follows:
{ $$\r_{e_{n+1}}=\left[\begin{smallmatrix}
 1 & 0 & 0 & 0&0&0&\cdots && 0&0 & 0\\
  \frac{a_{2,1}+a_{4,1}-b_{2,1}}{2} & 0 & a_{2,3}& 0 &0&0 & \cdots &&0  & 0& 0\\
  -2 & 0 & -1 & 0 & 0&0 &\cdots &&0 &0& 0\\
  0 & 0 &  0 & 0 &0 &0 &\cdots&&0 &0 & 0\\
 0 & 0 & a_{5,3} & 0 & 1  &0&\cdots & &0&0 & 0\\
  0 & 0 &\boldsymbol{\cdot} & a_{5,3} & 0 &2&\cdots & &0&0 & 0\\
    0 & 0 &\boldsymbol{\cdot} & \boldsymbol{\cdot} & \ddots &0&\ddots &&\vdots&\vdots &\vdots\\
  \vdots & \vdots & \vdots &\vdots &  &\ddots&\ddots &\ddots &\vdots&\vdots & \vdots\\
 0 & 0 & a_{n-2,3}& a_{n-3,3}& \cdots&\cdots &a_{5,3}&0&n-6 &0& 0\\
0 & 0 & a_{n-1,3}& a_{n-2,3}& \cdots&\cdots &\boldsymbol{\cdot}&a_{5,3}&0 &n-5& 0\\
 0 & 0 & a_{n,3}& a_{n-1,3}& \cdots&\cdots &\boldsymbol{\cdot}&\boldsymbol{\cdot}&a_{5,3} &0& n-4
\end{smallmatrix}\right],$$}
{ $$ \r_{e_{n+2}}=\left[\begin{smallmatrix}
 1 & 0 & 0 & 0&0&0&\cdots && 0&0 & 0\\
  \alpha_{2,3} & 1 & \alpha_{2,3}& 0 &0&0 & \cdots &&0  & 0& 0\\
  -1 & 0 & 0 & 0 & 0&0 &\cdots &&0 &0& 0\\
  0 & 0 &  0 & 1 &0 &0 &\cdots&&0 &0 & 0\\
 0 & 0 & \alpha_{5,3} & 0 & 2  &0&\cdots & &0&0 & 0\\
  0 & 0 &\boldsymbol{\cdot} & \alpha_{5,3} & 0 &3&\cdots & &0&0 & 0\\
    0 & 0 &\boldsymbol{\cdot} & \boldsymbol{\cdot} & \ddots &0&\ddots &&\vdots&\vdots &\vdots\\
  \vdots & \vdots & \vdots &\vdots &  &\ddots&\ddots &\ddots &\vdots&\vdots & \vdots\\
 0 & 0 & \alpha_{n-2,3}& \alpha_{n-3,3}& \cdots&\cdots &\alpha_{5,3}&0&n-5 &0& 0\\
0 & 0 & \alpha_{n-1,3}& \alpha_{n-2,3}& \cdots&\cdots &\boldsymbol{\cdot}&\alpha_{5,3}&0 &n-4& 0\\
 0 & 0 & \alpha_{n,3}& \alpha_{n-1,3}& \cdots&\cdots &\boldsymbol{\cdot}&\boldsymbol{\cdot}&\alpha_{5,3} &0& n-3
\end{smallmatrix}\right].$$}
\begin{center}\textit{General approach to find right solvable Leibniz
algebras with a codimension two nilradical $\mathcal{L}^2$.}\footnote{When we work with the left Leibniz algebras, we first change the right multiplication operator to the left  everywhere and the right Leibniz identity to the left Leibniz identity in step $(iii).$ We also interchange
$s$ and  $r$ on the very left in steps $(i)$ and $(ii)$ as well. }
\end{center}
\begin{enumerate}[noitemsep, topsep=0pt]
\item[(i)] We consider $\r_{[e_r,e_s]}=[\r_{e_s},\r_{e_r}]=\r_{e_s}\r_{e_r}-\r_{e_r}\r_{e_s},\,(n+1\leq r\leq n+2,\,1\leq s\leq n+2)$
and compare with $c_1\r_{e_1}+\sum_{k=3}^{n-1}c_k\r_{e_k},$ because $e_{2}$
and $e_n$ are in the center of $\mathcal{L}^2,\,(n\geq4)$ defined in $(\ref{L2})$ to find all the unknown commutators.
\item[(ii)] We write down
$[e_{r},e_{s}],\,(n+1\leq r\leq n+2,\,1\leq s\leq n+2)$ including a linear combination of
$e_{2}$ and $e_n$ as well. We add the brackets of the nilradical $\mathcal{L}^2$ and outer derivations $\r_{e_{n+1}}$ and $\r_{e_{n+2}}.$
\item[(iii)] One satisfies the right Leibniz identity: $[[e_r,e_s],e_t]=[[e_r,e_t],e_s]+[e_r,[e_s,e_t]]$ 
or, equivalently, $\r_{e_t}\left([e_r,e_s]\right)=[\r_{e_t}(e_r),e_s]+[e_r,\r_{e_t}(e_s)],\,(1\leq r,s,t\leq n+2)$ for all the brackets obtained in step $(ii).$
\item[(iv)] Then we carry out the technique of ``absorption'' (see Section \ref{Solvable left Leibniz algebras}) to remove as many parameters as possible to simplify the algebra. 
\item[(v)]  We apply the change of basis transformations without affecting the nilradical to remove as many parameters as possible.\end{enumerate}
\noindent $(i)$ Considering $\r_{[e_{n+1},e_{n+2}]},$ which is the same as $[\r_{e_{n+2}},\r_{e_{n+1}}],$ we deduce that $\alpha_{2,3}:=a_{2,3},
\alpha_{i,3}:=a_{i,3},(5\leq i\leq n).$ As a result, the outer derivation $\r_{e_{n+2}}$ becomes:
$$ \r_{e_{n+2}}=\left[\begin{smallmatrix}
 1 & 0 & 0 & 0&0&0&\cdots && 0&0 & 0\\
 a_{2,3} & 1 & a_{2,3}& 0 &0&0 & \cdots &&0  & 0& 0\\
  -1 & 0 & 0 & 0 & 0&0 &\cdots &&0 &0& 0\\
  0 & 0 &  0 & 1 &0 &0 &\cdots&&0 &0 & 0\\
 0 & 0 & a_{5,3} & 0 & 2  &0&\cdots & &0&0 & 0\\
  0 & 0 &\boldsymbol{\cdot} & a_{5,3} & 0 &3&\cdots & &0&0 & 0\\
    0 & 0 &\boldsymbol{\cdot} & \boldsymbol{\cdot} & \ddots &0&\ddots &&\vdots&\vdots &\vdots\\
  \vdots & \vdots & \vdots &\vdots &  &\ddots&\ddots &\ddots &\vdots&\vdots & \vdots\\
 0 & 0 & a_{n-2,3}& a_{n-3,3}& \cdots&\cdots &a_{5,3}&0&n-5 &0& 0\\
0 & 0 & a_{n-1,3}& a_{n-2,3}& \cdots&\cdots &\boldsymbol{\cdot}&a_{5,3}&0 &n-4& 0\\
 0 & 0 & a_{n,3}& a_{n-1,3}& \cdots&\cdots &\boldsymbol{\cdot}&\boldsymbol{\cdot}&a_{5,3} &0& n-3
\end{smallmatrix}\right].$$
We also find the following commutators:
\allowdisplaybreaks
\begin{equation}
\left\{
\begin{array}{l}
\displaystyle  \nonumber \r_{[e_{n+1},e_{1}]}=-\r_{e_1}+2\r_{e_3},\r_{[e_{n+1},e_2]}=0,\r_{[e_{n+1},e_i]}=(4-i)\r_{e_i}-\sum_{k=i+2}^{n-1}{a_{k-i+3,3}\r_{e_k}},\\
\displaystyle \r_{[e_{n+1},e_j]}=0,(n\leq j\leq n+1),\r_{[e_{n+1},e_{n+2}]}=\sum_{k=4}^{n-1}{a_{k+1,3}\r_{e_{k}}},\r_{[e_{n+2},e_1]}=-\r_{e_1}+
\r_{e_3},\\
\displaystyle \r_{[e_{n+2},e_2]}=0,\r_{[e_{n+2},e_{i}]}=(3-i)\r_{e_i}-\sum_{k=i+2}^{n-1}{a_{k-i+3,3}\r_{e_k}},(3\leq i\leq n-1),\\
\displaystyle \r_{[e_{n+2},e_n]}=0,\r_{[e_{n+2},e_{n+1}]}=-\sum_{k=4}^{n-1}{a_{k+1,3}\r_{e_{k}}},\r_{[e_{n+2},e_{n+2}]}=0.
\end{array} 
\right.
\end{equation} 
\noindent $(ii)$ We include a linear combination of $e_2$ and $e_n$:
\begin{equation}
\left\{
\begin{array}{l}
\displaystyle  \nonumber [e_{n+1},e_{1}]=-e_1+c_{2,1}e_2+2e_3+c_{n,1}e_n,[e_{n+1},e_2]=c_{2,2}e_2+c_{n,2}e_n,[e_{n+1},e_i]=c_{2,i}e_2\\
\displaystyle+(4-i)e_i-\sum_{k=i+2}^{n-1}{a_{k-i+3,3}e_k}+c_{n,i}e_n,[e_{n+1},e_j]=c_{2,j}e_2+c_{n,j}e_n,(n\leq j\leq n+1),\\
\displaystyle [e_{n+1},e_{n+2}]=c_{2,n+2}e_2+\sum_{k=4}^{n-1}{a_{k+1,3}e_{k}}+
c_{n,n+2}e_n,[e_{n+2},e_1]=-e_1+d_{2,1}e_2+e_3+d_{n,1}e_n,\\
\displaystyle [e_{n+2},e_2]=d_{2,2}e_2+d_{n,2}e_n, [e_{n+2},e_{i}]=d_{2,i}e_2+(3-i)e_i-\sum_{k=i+2}^{n-1}{a_{k-i+3,3}e_k}+d_{n,i}e_n,\\
\displaystyle (3\leq i\leq n-1),[e_{n+2},e_n]=d_{2,n}e_2+d_{n,n}e_n,[e_{n+2},e_{n+1}]=d_{2,n+1}e_2-\sum_{k=4}^{n-1}{a_{k+1,3}e_{k}}\\
\displaystyle +d_{n,n+1}e_n, [e_{n+2},e_{n+2}]=d_{2,n+2}e_2+d_{n,n+2}e_n.
\end{array} 
\right.
\end{equation} 
Besides we have the brackets from $\mathcal{L}^2$ and outer derivations $\r_{e_{n+1}}$ and $\r_{e_{n+2}}$ as well.

\noindent $(iii)$ We satisfy the right Leibniz identity. There are two cases: $(n=4)$ and $(n\geq5)$.
Case $(n\geq5)$ is shown in Table \ref{RightCodimTwo(L2)}.
If $(n=4)$, then we apply the following identities one after another: $1.-3.,6.,
\r_{e_5}[e_5,e_3]=[\r_{e_5}(e_5),e_3]+[e_5,\r_{e_5}(e_3)], \r_{e_5}[e_5,e_5]=[\r_{e_5}(e_5),e_5]+[e_5,\r_{e_5}(e_5)],
\r_{e_5}[e_5,e_1]=[\r_{e_5}(e_5),e_1]+[e_5,\r_{e_5}(e_1)],$ $12.$ (gives that $d_{2,3}=0$ as well, but instead that $d_{4,3}=0$),
$\r_{e_6}[e_6,e_6]=[\r_{e_6}(e_6),e_6]+[e_6,\r_{e_6}(e_6)],14.,
\r_{e_6}[e_5,e_5]=[\r_{e_6}(e_5),e_5]+[e_5,\r_{e_6}(e_5)],\r_{e_5}[e_6,e_6]=[\r_{e_5}(e_6),e_6]+[e_6,\r_{e_5}(e_6)].$
We finish with the identity $15.$, which gives us that $d_{2,5}:=c_{2,5}$. 
\begin{table}[htb]
\caption{Right Leibniz identities in the codimension two nilradical $\mathcal{L}^2,(n\geq5)$.}
\label{RightCodimTwo(L2)}
\begin{tabular}{|l|p{2.7cm}|p{12cm}|}
\hline
\scriptsize Steps &\scriptsize Ordered triple &\scriptsize
Result\\ \hline
\scriptsize $1.$ &\scriptsize $\r_{e_1}\left([e_{n+1},e_{1}]\right)$ &\scriptsize
$[e_{n+1},e_2]=0$
$\implies$ $c_{2,2}=c_{n,2}=0.$\\ \hline
\scriptsize $2.$ &\scriptsize $\r_{e_1}\left([e_{n+2},e_{1}]\right)$ &\scriptsize
$[e_{n+2},e_2]=0$
$\implies$ $d_{2,2}=d_{n,2}=0.$\\ \hline
\scriptsize $3.$ &\scriptsize $\r_{e_3}\left([e_{n+1},e_{1}]\right)$ &\scriptsize
$c_{2,4}:=1,c_{n,4}:=-a_{n-1,3},$ where $a_{4,3}=0$ 
$\implies$  $[e_{n+1},e_4]=e_2-\sum_{k=6}^{n}{a_{k-1,3}e_k}.$ \\ \hline
\scriptsize $4.$ &\scriptsize $\r_{e_i}\left([e_{n+1},e_{1}]\right)$ &\scriptsize
$c_{2,i+1}=0,c_{n,i+1}:=-a_{n-i+2,3},(4\leq i\leq n-2),$ where $a_{4,3}=0$ 
$\implies$  $[e_{n+1},e_j]=\left(4-j\right)e_j-\sum_{k=j+2}^{n}{a_{k-j+3,3}e_k},(5\leq j\leq n-1).$ \\ \hline
\scriptsize $5.$ &\scriptsize $\r_{e_{n-1}}\left([e_{n+1},e_{1}]\right)$ &\scriptsize
$c_{2,n}=0,$ $c_{n,n}:=4-n$
$\implies$  $[e_{n+1},e_{n}]=\left(4-n\right)e_{n}.$ Altogether with $4.,$
  $[e_{n+1},e_i]=\left(4-i\right)e_i-\sum_{k=i+2}^{n}{a_{k-i+3,3}e_k},(5\leq i\leq n).$  \\ \hline
   \scriptsize $6.$ &\scriptsize $\r_{e_3}\left([e_{n+2},e_{1}]\right)$ &\scriptsize
$d_{2,4}:=1,d_{n,4}:=-a_{n-1,3},$ where $a_{4,3}=0$ 
$\implies$  $[e_{n+2},e_4]=e_2-e_4-\sum_{k=6}^{n}{a_{k-1,3}e_k}.$   \\ \hline  
   \scriptsize $7.$ &\scriptsize $\r_{e_i}\left([e_{n+2},e_{1}]\right)$ &\scriptsize
$d_{2,i+1}=0,d_{n,i+1}:=-a_{n-i+2,3},(4\leq i\leq n-2),$ where $a_{4,3}=0$ 
$\implies$  $[e_{n+2},e_j]=\left(3-j\right)e_j-\sum_{k=j+2}^{n}{a_{k-j+3,3}e_k},(5\leq j\leq n-1).$   \\ \hline
\scriptsize $8.$ &\scriptsize $\r_{e_{n-1}}\left([e_{n+2},e_{1}]\right)$ &\scriptsize
 $d_{2,n}=0,$ $d_{n,n}:=3-n$ 
$\implies$  $[e_{n+2},e_{n}]=\left(3-n\right)e_{n}.$ Altogether with $7.,$
  $[e_{n+2},e_i]=\left(3-i\right)e_i-\sum_{k=i+2}^{n}{a_{k-i+3,3}e_k},(5\leq i\leq n).$   \\ \hline
  \scriptsize $9.$ &\scriptsize $\r_{e_{3}}\left([e_{n+1},e_{n+1}]\right)$ &\scriptsize
$c_{2,3}:=-a_{2,3},$ $c_{n,3}:=-a_{n,3}$ 
$\implies$  $[e_{n+1},e_{3}]=-a_{2,3}e_2+e_3-\sum_{k=5}^n{a_{k,3}e_k}.$   \\ \hline
\scriptsize $10.$ &\scriptsize $\r_{e_{n+2}}\left([e_{n+1},e_{n+1}]\right)$ &\scriptsize
$c_{2,n+1}:=a_{5,3},c_{n,n+1}=0$
$\implies$ $[e_{n+1},e_{n+1}]=a_{5,3}e_2.$ \\ \hline
\scriptsize $11.$ &\scriptsize $\r_{e_1}\left([e_{n+1},e_{n+1}]\right)$ &\scriptsize
 $c_{2,1}:=-\frac{1}{2}\left(a_{2,1}+a_{4,1}-b_{2,1}\right), c_{n,1}=0$
$\implies$  $[e_{n+1},e_1]=-e_1-\frac{1}{2}\left(a_{2,1}+a_{4,1}-b_{2,1}\right)e_2+2e_3.$\\ \hline
\scriptsize $12.$ &\scriptsize $\r_{e_{n+2}}\left([e_{n+2},e_{3}]\right)$ &\scriptsize
$d_{2,3}=0,$ $d_{n,3}:=-a_{n,3}$ 
$\implies$  $[e_{n+2},e_{3}]=-\sum_{k=5}^n{a_{k,3}e_k}.$   \\ \hline
\scriptsize $13.$ &\scriptsize $\r_{e_{n+1}}\left([e_{n+2},e_{n+2}]\right)$ &\scriptsize
$d_{2,n+1}:=a_{5,3},d_{n,n+2}=0$
$\implies$  $[e_{n+2},e_{n+1}]=a_{5,3}e_2-\sum_{k=4}^{n-1}{a_{k+1,3}e_k}+d_{n,n+1}e_n,[e_{n+2},e_{n+2}]=d_{2,n+2}e_2$. \\ \hline
\scriptsize $14.$ &\scriptsize $\r_{e_{n+1}}\left([e_{n+2},e_{1}]\right)$ &\scriptsize
$d_{2,1}:=a_{2,3}-\frac{a_{2,1}}{2}-\frac{a_{4,1}}{2}+\frac{b_{2,1}}{2},d_{n,1}=0,(n\geq6)$
$\implies$ $[e_{n+2},e_1]=-e_1+\left(a_{2,3}-\frac{a_{2,1}}{2}-\frac{a_{4,1}}{2}+\frac{b_{2,1}}{2}\right)e_2+e_3.$
(\textbf{Remark}: If $n=5,$ then we apply $\r_{e_7}\left([e_7,e_1]\right)$ to obtain that $d_{5,1}=0.$)\\ \hline
\scriptsize $15.$ &\scriptsize $\r_{e_{n+2}}\left([e_{n+2},e_{n+1}]\right)$ &\scriptsize
$d_{n,n+1}:=-c_{n,n+2}$
$\implies$ $[e_{n+2},e_{n+1}]=a_{5,3}e_2-\sum_{k=4}^{n-1}{a_{k+1,3}e_{k}}-
c_{n,n+2}e_n.$\\ \hline
\end{tabular}
\end{table}
In both cases $\r_{e_{n+1}}$ and $\r_{e_{n+2}}$ restricted to the nilradical do not change, but we assign $\frac{a_{2,1}+a_{4,1}-b_{2,1}}{2}:=a_{2,1},$
make corresponding changes in $\r_{e_{n+1}}$ and in the remaining brackets below:
   \allowdisplaybreaks
\begin{equation}
\begin{array}{l}
\displaystyle  \nonumber (1)[e_{5},e_{1}]=-e_1-a_{2,1}e_2+2e_3,
[e_{5},e_3]=-a_{2,3}e_2+e_3, [e_{5},e_4]=e_2,[e_{5},e_{5}]=c_{2,5}e_2,\\
\displaystyle[e_{5},e_{6}]=c_{2,6}e_2+c_{2,5}e_4,[e_{6},e_1]=-e_1+\left(a_{2,3}-a_{2,1}\right)e_2+e_3,[e_{6},e_4]=e_2-e_4,\\
\displaystyle [e_{6},e_{5}]=c_{2,5}\left(e_2-e_{4}\right),[e_{6},e_{6}]=d_{2,6}e_2,(n=4).\\
\displaystyle (2) [e_{n+1},e_{1}]=-e_1-a_{2,1}e_2+2e_3,
[e_{n+1},e_3]=-a_{2,3}e_2+e_3-\sum_{k=5}^n{a_{k,3}e_k},\\
\displaystyle [e_{n+1},e_4]=e_2-\sum_{k=6}^n{a_{k-1,3}e_k},[e_{n+1},e_i]=(4-i)e_i-\sum_{k=i+2}^{n}{a_{k-i+3,3}e_k},[e_{n+1},e_{n+1}]=a_{5,3}e_2,\\
\displaystyle [e_{n+1},e_{n+2}]=c_{2,n+2}e_2+\sum_{k=4}^{n-1}{a_{k+1,3}e_{k}}+
c_{n,n+2}e_n,[e_{n+2},e_1]=-e_1+\left(a_{2,3}-a_{2,1}\right)e_2+e_3,\\
\displaystyle [e_{n+2},e_3]=-\sum_{k=5}^n{a_{k,3}e_k}, [e_{n+2},e_4]=e_2-e_4-\sum_{k=6}^n{a_{k-1,3}e_k},[e_{n+2},e_{i}]=(3-i)e_i-\sum_{k=i+2}^{n}{a_{k-i+3,3}e_k},\\
\displaystyle [e_{n+2},e_{n+1}]=a_{5,3}e_2-\sum_{k=4}^{n-1}{a_{k+1,3}e_{k}}-c_{n,n+2}e_n, [e_{n+2},e_{n+2}]=d_{2,n+2}e_2,(5\leq i\leq n,n\geq5).\end{array} 
\end{equation}

Altogether the nilradical $\mathcal{L}^2$ $(\ref{L2}),$ the outer derivations $\r_{e_{n+1}}$ and $\r_{e_{n+2}}$
 written in the bracket notation and the remaining brackets given above define a continuous family of two solvable right Leibniz algebras
depending on the parameters. Then we apply the technique of ``absorption''  according to step $(iv)$ to the algebras in $(1)$ and $(2).$ 
\begin{itemize}[noitemsep, topsep=0pt]
\item First we apply the transformation $e^{\prime}_i=e_i,(1\leq i\leq n,n\geq4),e^{\prime}_{n+1}=e_{n+1}-c_{2,n+2}e_2,
e^{\prime}_{n+2}=e_{n+2}-d_{2,n+2}e_2.$
This transformation removes the coefficients $c_{2,n+2}$ and $d_{2,n+2}$ in front of $e_2$ in
$[e_{n+1},e_{n+2}]$ and $[e_{n+2},e_{n+2}],$ respectively, without affecting other entries.

\item Then we apply the transformation
$e^{\prime}_i=e_i,(1\leq i\leq n,n\geq4),e^{\prime}_{n+1}=e_{n+1}-a_{5,3}e_4,
e^{\prime}_{n+2}=e_{n+2}.$ It removes the coefficient $a_{5,3}$
in front of $e_2$ in $[e_{n+1},e_{n+1}]$ and $[e_{n+2},e_{n+1}]$. Besides it removes
$a_{5,3}$ and $-a_{5,3}$ in front of $e_4$ in $[e_{n+1},e_{n+2}]$ and  $[e_{n+2},e_{n+1}],$ respectively. 
This transformation affects the coefficients in front of $e_{k},(6\leq k\leq n-1)$
in $[e_{n+1},e_{n+2}]$ and $[e_{n+2},e_{n+1}],$ which we rename by $a_{k+1,3}-a_{5,3}a_{k-1,3}$ and 
$-a_{k+1,3}+a_{5,3}a_{k-1,3},$ respectively. It also affects the coefficients in front $e_n,(n\geq6)$ in
$[e_{n+1},e_{n+2}]$ and $[e_{n+2},e_{n+1}],$ which we rename back by $c_{n,n+2}$ and $-c_{n,n+2},$
respectively. It introduces $a_{5,3}$ and $-a_{5,3}$ in the $(5,1)^{st}$ position in $\r_{e_{n+1}}$ and $\L_{e_{n+1}},$
respectively.
\begin{remark}
If $n=4,$ then we change $a_{5,3}$ to $c_{2,5}$. In this case the transformation does not affect any other entries.
\end{remark}
\item
Applying the transformation $e^{\prime}_j=e_j,(1\leq j\leq n+1,n\geq6),
e^{\prime}_{n+2}=e_{n+2}+a_{6,3}e_5+\sum_{k=6}^{n-1}{\frac{A_{k+1,3}}{k-4}e_k},$
where $A_{k+1,3}:=a_{k+1,3}-\sum_{i=5}^6a_{i,3}a_{k-i+4,3}-\sum_{i=8}^k{\frac{A_{i-1,3}a_{k-i+5,3}}{i-6}},$
$(6\leq k\leq n-1,n\geq7)$ such that $a_{4,3}=0,$
we remove the coefficients $a_{6,3}$ and $-a_{6,3}$ in front of $e_5$ in $[e_{n+1},e_{n+2}]$
and $[e_{n+2},e_{n+1}],$ respectively. Besides we remove $a_{k+1,3}-a_{5,3}a_{k-1,3}$ and $-a_{k+1,3}+a_{5,3}a_{k-1,3}$
in front of $e_k,(6\leq k\leq n-1)$ in $[e_{n+1},e_{n+2}]$ and $[e_{n+2},e_{n+1}],$ respectively.
This transformation introduces $-a_{6,3}$ and $a_{6,3}$ in the $(6,1)^{st}$ position;
$\frac{A_{k+1,3}}{4-k}$ and $\frac{A_{k+1,3}}{k-4}$ in the $(k+1,1)^{st},(6\leq k\leq n-1)$ positions in $\r_{e_{n+2}}$
and $\L_{e_{n+2}},$ respectively. It also affects the coefficients in front $e_n,(n\geq7)$ in
$[e_{n+1},e_{n+2}]$ and $[e_{n+2},e_{n+1}],$ which we rename back by $c_{n,n+2}$ and $-c_{n,n+2},$
respectively.
\item Finally applying the transformation $e^{\prime}_i=e_i,(1\leq i\leq n+1,n\geq5),
e^{\prime}_{n+2}=e_{n+2}+\frac{c_{n,n+2}}{n-4}e_n,$
we remove $c_{n,n+2}$ and $-c_{n,n+2}$ in front of $e_n$
in $[e_{n+1},e_{n+2}]$ and $[e_{n+2},e_{n+1}],$ respectively,
without affecting other entries. We obtain that $\r_{e_{n+1}}$ and $\r_{e_{n+2}}$ are as follows:
 \end{itemize}
{$$\r_{e_{n+1}}=\left[\begin{smallmatrix}
 1 & 0 & 0 & 0&0&0&\cdots && 0&0 & 0\\
  a_{2,1} & 0 & a_{2,3}& 0 &0&0 & \cdots &&0  & 0& 0\\
  -2 & 0 & -1 & 0 & 0&0 &\cdots &&0 &0& 0\\
  0 & 0 &  0 & 0 &0 &0 &\cdots&&0 &0 & 0\\
 a_{5,3} & 0 & a_{5,3} & 0 & 1  &0&\cdots & &0&0 & 0\\
  0 & 0 &\boldsymbol{\cdot} & a_{5,3} & 0 &2&\cdots & &0&0 & 0\\
    0 & 0 &\boldsymbol{\cdot} & \boldsymbol{\cdot} & \ddots &0&\ddots &&\vdots&\vdots &\vdots\\
  \vdots & \vdots & \vdots &\vdots &  &\ddots&\ddots &\ddots &\vdots&\vdots & \vdots\\
 0 & 0 & a_{n-2,3}& a_{n-3,3}& \cdots&\cdots &a_{5,3}&0&n-6 &0& 0\\
0 & 0 & a_{n-1,3}& a_{n-2,3}& \cdots&\cdots &\boldsymbol{\cdot}&a_{5,3}&0 &n-5& 0\\
 0 & 0 & a_{n,3}& a_{n-1,3}& \cdots&\cdots &\boldsymbol{\cdot}&\boldsymbol{\cdot}&a_{5,3} &0& n-4
\end{smallmatrix}\right],$$}
$$\r_{e_{n+2}}=\left[\begin{smallmatrix}
 1 & 0 & 0 & 0&0&0&0&\cdots && 0&0 & 0\\
 a_{2,3} & 1 & a_{2,3}& 0 &0&0 & 0&\cdots &&0  & 0& 0\\
  -1 & 0 & 0 & 0 & 0&0 &0&\cdots &&0 &0& 0\\
  0 & 0 &  0 & 1 &0 &0 &0&\cdots&&0 &0 & 0\\
  0 & 0 & a_{5,3} & 0 & 2  &0&0&\cdots & &0&0 & 0\\
 -a_{6,3} & 0 &\boldsymbol{\cdot} & a_{5,3} & 0 &3&0&\cdots & &0&0 & 0\\
  -\frac{1}{2}A_{7,3} & 0 &\boldsymbol{\cdot} & \boldsymbol{\cdot} & a_{5,3}&0 &4& &&\vdots&\vdots &\vdots\\
      -\frac{1}{3}A_{8,3} & 0 &\boldsymbol{\cdot} & \boldsymbol{\cdot} &\boldsymbol{\cdot} &a_{5,3}&0 &\ddots&&\vdots&\vdots &\vdots\\
  \vdots & \vdots & \vdots &\vdots &  &&\ddots&\ddots &\ddots &\vdots&\vdots & \vdots\\
  -\frac{1}{n-7}A_{n-2,3} & 0 & a_{n-2,3}& a_{n-3,3}& \cdots&\cdots&\cdots &a_{5,3}&0&n-5 &0& 0\\
    -\frac{1}{n-6}A_{n-1,3} & 0 & a_{n-1,3}& a_{n-2,3}& \cdots&\cdots&\cdots &\boldsymbol{\cdot}&a_{5,3}&0 &n-4& 0\\
 -\frac{1}{n-5}A_{n,3} & 0 & a_{n,3}&a_{n-1,3}& \cdots&\cdots &\cdots&\boldsymbol{\cdot}&\boldsymbol{\cdot}&a_{5,3} &0& n-3
\end{smallmatrix}\right],(n\geq4).$$
The remaining brackets are given below:
\begin{equation}
\left\{
\begin{array}{l}
\displaystyle  \nonumber [e_{n+1},e_{1}]=-e_1-a_{2,1}e_2+2e_3-a_{5,3}e_5,(a_{5,3}=0,\,when\,\,n=4),[e_{n+1},e_3]=-a_{2,3}e_2+e_3-\\
\displaystyle \sum_{k=5}^n{a_{k,3}e_k},[e_{n+1},e_4]=e_2-\sum_{k=6}^n{a_{k-1,3}e_k},[e_{n+1},e_i]=(4-i)e_i-\sum_{k=i+2}^{n}{a_{k-i+3,3}e_k},\\
\displaystyle [e_{n+2},e_1]=-e_1+\left(a_{2,3}-a_{2,1}\right)e_2+e_3+
\sum_{k=6}^n{\frac{A_{k,3}}{k-5}e_k};where\,A_{6,3}:=a_{6,3},\\
\displaystyle [e_{n+2},e_3]=-\sum_{k=5}^n{a_{k,3}e_k},[e_{n+2},e_4]=e_2-e_4-\sum_{k=6}^n{a_{k-1,3}e_k},\\
\displaystyle [e_{n+2},e_{i}]=(3-i)e_i-\sum_{k=i+2}^{n}{a_{k-i+3,3}e_k},(5\leq i\leq n,n\geq4).
\end{array} 
\right.
\end{equation} 
\noindent $(v)$ Finally we apply the change of basis transformation:
$e^{\prime}_1=e_1+\left(a_{2,1}-2a_{2,3}\right)e_2+\frac{a_{5,3}}{2}e_5+
\frac{2a_{6,3}}{3}e_6+\sum_{k=7}^{n}{\frac{C_{1,k}}{k-5}e_k},e^{\prime}_2=e_2,
e^{\prime}_3=e_3-a_{2,3}e_2,
e^{\prime}_i=e_i-\sum_{k=i+2}^n{\frac{B_{k-i+3,3}}{k-i}e_k},$
$(3\leq i\leq n-2),e^{\prime}_{n-1}=e_{n-1},e^{\prime}_{n}=e_{n},e^{\prime}_{n+1}=e_{n+1},e^{\prime}_{n+2}=e_{n+2},$
where $B_{j,3}:=a_{j,3}-\sum_{k=7}^j{\frac{B_{k-2,3}a_{j-k+5,3}}{k-5}},(5\leq j\leq n)$ and
$C_{1,k}:=\frac{2B_{k,3}}{k-3}-\sum_{i=5}^6{\frac{i-4}{i-3}a_{i,3}a_{k-i+3,3}}-\sum_{i=9}^k{\frac{C_{1,i-2}a_{k-i+5,3}}{i-7}},(7\leq k\leq n),$
where $a_{4,3}=0.$
\begin{remark} We have that $a_{5,3}=a_{6,3}=0$ in the transformation, when $(n=4)$
and $a_{6,3}=0,$ when $(n=5).$
\end{remark}
 We summarize a result 
in the following theorem: 
 \begin{theorem}\label{RCodim2L2} There is one solvable
indecomposable right Leibniz algebra up to isomorphism with a codimension two nilradical
$\mathcal{L}^2,(n\geq4),$ which is given below:
\begin{equation}
\begin{array}{l}
\displaystyle  \nonumber \g_{n+2,1}: [e_1,e_{n+1}]=e_1-2e_3,
 [e_{i},e_{n+1}]=(i-4)e_i,[e_{n+1},e_{1}]=-e_1+2e_3,[e_{n+1},e_3]=e_3,\\
\displaystyle [e_{n+1},e_4]=e_2,[e_{n+1},e_{j}]=(4-j)e_j,[e_{1},e_{n+2}]=e_1-e_3,[e_2,e_{n+2}]=e_2,[e_i,e_{n+2}]=(i-3)e_i,\\
\displaystyle (3\leq i\leq n),[e_{n+2},e_1]=-e_1+e_3,[e_{n+2},e_4]=e_2-e_4,[e_{n+2},e_j]=(3-j)e_j,(5\leq j\leq n),\\
\displaystyle DS=[n+2,\,n,\,n-2,\,0],LS=[n+2,\,n,\,n,...].
\end{array} 
\end{equation} 
\end{theorem}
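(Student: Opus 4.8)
The plan is to pick up directly from the normal form reached at the end of step $(iv)$, in which the outer derivations $\r_{e_{n+1}}$ and $\r_{e_{n+2}}$ have been reduced to the two matrices displayed above and the only surviving parameters in the remaining brackets are $a_{2,1},a_{2,3}$ together with $a_{5,3},a_{6,3},\ldots,a_{n,3}$ (under the convention $a_{4,3}=0$), and to show that the single change of basis written out in step $(v)$ clears all of them simultaneously. First I would substitute the new basis vectors into the brackets of $(\ref{L2})$ and check that the nilradical $\mathcal{L}^2$ is left unchanged: this holds because $e'_1$ and $e'_3$ differ from $e_1,e_3$ only by multiples of the central element $e_2$, while each correction $e'_i=e_i-\sum_{k=i+2}^{n}\frac{B_{k-i+3,3}}{k-i}e_k$ adds only strictly higher-weight basis vectors, which lie in the tail of $LS$ and therefore do not perturb the structure constants of the nilradical.

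Next I would recompute $[e'_i,e'_{n+1}]$ and $[e'_i,e'_{n+2}]$ and verify, weight by weight in the grading of $\mathcal{L}^2$, that the corrections cancel the unwanted coefficients. The recursive definitions $B_{j,3}:=a_{j,3}-\sum_{k=7}^{j}\frac{B_{k-2,3}a_{j-k+5,3}}{k-5}$ and $C_{1,k}:=\frac{2B_{k,3}}{k-3}-\sum_{i=5}^{6}\frac{i-4}{i-3}a_{i,3}a_{k-i+3,3}-\sum_{i=9}^{k}\frac{C_{1,i-2}a_{k-i+5,3}}{i-7}$ are arranged precisely so that, when one collects the coefficient of $e_k$ in $[e'_i,e'_{n+1}]$ for each $k\geq i+2$, the term produced by the correction to $e'_i$ annihilates $a_{k-i+3,3}$, leaving $[e'_i,e'_{n+1}]=(i-4)e'_i$ and $[e'_i,e'_{n+2}]=(i-3)e'_i$ for $3\leq i\leq n$; simultaneously the $e_2$-component of $e'_1$ and $e'_3=e_3-a_{2,3}e_2$ absorb $a_{2,1}$ and $a_{2,3}$ out of the $e_2$-entries of $[e_{n+1},e_1]$, $[e_{n+2},e_1]$ and $[e_{n+1},e_3]$. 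After these cancellations the brackets collapse to exactly the list defining $\g_{n+2,1}$, and the degenerate dimensions are disposed of by the stated Remark: when $n=4$ the parameters $a_{5,3},a_{6,3}$ are absent and the transformation reduces to $e'_1=e_1+(a_{2,1}-2a_{2,3})e_2$, $e'_3=e_3-a_{2,3}e_2$, while when $n=5$ one has $a_{6,3}=0$.

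The main obstacle is the bookkeeping: one must be sure that the three successive auxiliary transformations of step $(iv)$ and the final transformation of step $(v)$ never reintroduce a coefficient that was previously removed — in particular that the parameter $c_{n,n+2}$ created when $a_{5,3}$ is absorbed from $[e_{n+1},e_{n+1}]$ is genuinely killed by $e'_{n+2}=e_{n+2}+\frac{c_{n,n+2}}{n-4}e_n$, and that the first-column entries $-a_{6,3}$ and $-\frac{1}{k-6}A_{k,3}$ of $\r_{e_{n+2}}$ produced along the way are themselves absorbed at the last step rather than left behind. To finish I would record that $\g_{n+2,1}$ is indecomposable, which one checks directly from the structure constants (no proper ideal of $\mathcal{L}^2$ together with $\langle e_{n+1},e_{n+2}\rangle$ forms a direct-summand ideal of $\g$), and that the characteristic series is as claimed: $[\g,\g]$ already equals the nilradical, since the diagonal actions of $e_{n+1},e_{n+2}$ together with $[e_{n+1},e_4]=e_2$ recover every $e_i$, whence $[\mathcal{L}^2,\mathcal{L}^2]=\langle e_2,e_4,\ldots,e_n\rangle$ is abelian and $DS=[n+2,n,n-2,0]$, while $[\mathcal{L}^2,\g]=\mathcal{L}^2$ again gives $LS=[n+2,n,n,\ldots]$. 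Since no free parameter survives the normalisation, the isomorphism class is unique, which is the assertion of the theorem.
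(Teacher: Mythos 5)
Your proposal is correct and follows essentially the same route as the paper: the theorem is obtained by executing the step $(v)$ change of basis (with the recursive coefficients $B_{j,3}$ and $C_{1,k}$) on the normal form produced by the absorption in step $(iv)$, checking that the nilradical brackets survive and that all residual parameters cancel, with the $n=4,5$ degenerations handled by the stated Remark. Your added verifications of indecomposability and of the characteristic series $DS$ and $LS$ are consistent with what the paper asserts without detail; the only slight imprecision is that $e'_1$ is corrected not just by the central $e_2$ but also by $e_5,\dots,e_n$, so preserving $[e'_1,e'_1]=e_2$ relies on the cancellation $[e_k,e_1]+[e_1,e_k]=0$ rather than centrality alone.
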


\subsection{Solvable indecomposable left Leibniz algebras with a nilradical $\mathcal{L}^2$}
\subsubsection{One dimensional left solvable extensions of $\mathcal{L}^2$}
Classification follows the same steps, but with different cases per theorem. However we start with the same equation $(\ref{BRLeibniz})$.
\begin{theorem}\label{TheoremLL2} Set $a_{1,1}:=a$ and $b_{2,2}:=-b$ in $(\ref{BRLeibniz})$. To satisfy the left Leibniz identity, there are the following cases based on the conditions involving parameters,
each gives a continuous family of solvable Leibniz algebras:
\begin{enumerate}[noitemsep, topsep=0pt]
\item[(1)] If $a\neq0,b\neq a,(n=4)$ and $b\neq(4-n)a,a\neq0,b\neq a,(n\geq5),$ then
\begin{equation}
\left\{
\begin{array}{l}
\displaystyle  \nonumber [e_1,e_{n+1}]=ae_1+a_{2,1}e_2-(2a-b)e_3+A_{4,1}e_4+\sum_{k=5}^n{a_{k,1}e_k},
[e_3,e_{n+1}]=a_{2,3}e_2-\\
\displaystyle (a-b)e_3+A_{4,3}e_4+\sum_{k=5}^n{a_{k,3}e_k},[e_4,e_{n+1}]=(a-b)e_2+be_4+A_{4,3}e_5+\sum_{k=6}^n{a_{k-1,3}e_k},\\
\displaystyle [e_{j},e_{n+1}]=\left((j-4)a+b\right)e_{j}+A_{4,3}e_{j+1}+\sum_{k=j+2}^n{a_{k-j+3,3}e_k},(5\leq j\leq n),\\
\displaystyle[e_{n+1},e_{n+1}]=a_{2,n+1}e_2,[e_{n+1},e_1]=-ae_1+b_{2,1}e_2+(2a-b)e_3-A_{4,1}e_4-\sum_{k=5}^n{a_{k,1}e_k},\\
\displaystyle 
[e_{n+1},e_2]=-be_2,[e_{n+1},e_3]=b_{2,3}e_2+(a-b)e_3-A_{4,3}e_4-\sum_{k=5}^n{a_{k,3}e_k},\\
\displaystyle [e_{n+1},e_i]=\left((4-i)a-b\right)e_i-A_{4,3}e_{i+1}-\sum_{k=i+2}^n{a_{k-i+3,3}e_k},(4\leq i\leq n),\\
\displaystyle where\,\,A_{4,1}:=-a_{2,1}+\frac{(2a-b)(a_{2,3}+b_{2,3})-a\cdot b_{2,1}}{a-b}\,\,and\,\,A_{4,3}:=b_{2,3}+\frac{a\cdot a_{2,3}}{a-b},
\end{array} 
\right.
\end{equation} 
$$\L_{e_{n+1}}=\left[\begin{smallmatrix}
 -a & 0 & 0 & 0&0&&\cdots &0& \cdots&0 & 0&0 \\
  b_{2,1} & -b & b_{2,3}& 0 &0& & \cdots &0&\cdots  & 0& 0&0\\
  2a-b & 0 & a-b & 0 & 0& &\cdots &0&\cdots &0& 0&0 \\
  -A_{4,1} & 0 &  -A_{4,3} & -b &0 & &\cdots&0&\cdots &0 & 0&0\\
  -a_{5,1} & 0 & -a_{5,3} & -A_{4,3} & -a-b  &&\cdots &0 &\cdots&0 & 0&0 \\
 \boldsymbol{\cdot} & \boldsymbol{\cdot} & \boldsymbol{\cdot} & -a_{5,3} & -A_{4,3}  &\ddots& &\vdots &&\vdots & \vdots&\vdots \\
  \vdots & \vdots & \vdots &\vdots &\vdots  &\ddots& \ddots&\vdots &&\vdots & \vdots&\vdots \\
  -a_{i,1} & 0 & -a_{i,3} & -a_{i-1,3} & -a_{i-2,3}  &\cdots&-A_{4,3}& (4-i)a-b&\cdots&0 & 0&0\\
   \vdots  & \vdots  & \vdots &\vdots &\vdots&&\vdots &\vdots &&\vdots  & \vdots&\vdots \\
 -a_{n-1,1} & 0 & -a_{n-1,3}& -a_{n-2,3}& -a_{n-3,3}&\cdots &-a_{n-i+3,3} &-a_{n-i+2,3}&\cdots&-A_{4,3} &(5-n)a-b& 0\\
 -a_{n,1} & 0 & -a_{n,3}& -a_{n-1,3}& -a_{n-2,3}&\cdots &-a_{n-i+4,3}  &-a_{n-i+3,3}&\cdots&-a_{5,3} &-A_{4,3}& (4-n)a-b
\end{smallmatrix}\right].$$
\item[(2)] If $b:=(4-n)a,a\neq0,(n\geq5),$
then the brackets for the algebra are  
\begin{equation}
\left\{
\begin{array}{l}
\displaystyle  \nonumber [e_1,e_{n+1}]=ae_1+a_{2,1}e_2+(2-n)ae_3+A_{4,1}e_4+\sum_{k=5}^n{a_{k,1}e_k}, [e_3,e_{n+1}]=a_{2,3}e_2+\\
\displaystyle (3-n)ae_3+A_{4,3}e_4+\sum_{k=5}^n{a_{k,3}e_k},[e_4,e_{n+1}]=
(n-3)ae_2+(4-n)ae_4+A_{4,3}e_5+\\
\displaystyle \sum_{k=6}^n{a_{k-1,3}e_k},[e_{j},e_{n+1}]=\left(j-n\right)ae_{j}+A_{4,3}e_{j+1}+\sum_{k=j+2}^n{a_{k-j+3,3}e_k},(5\leq j\leq n),\\
\displaystyle [e_{n+1},e_{n+1}]=a_{2,n+1}e_2+a_{n,n+1}e_n,[e_{n+1},e_1]=-ae_1+b_{2,1}e_2+(n-2)ae_3-A_{4,1}e_4-\\
\displaystyle \sum_{k=5}^n{a_{k,1}e_k},[e_{n+1},e_2]=(n-4)ae_2,[e_{n+1},e_3]=b_{2,3}e_2+(n-3)ae_3-A_{4,3}e_4-\sum_{k=5}^n{a_{k,3}e_k},\\
\displaystyle [e_{n+1},e_i]=\left(n-i\right)ae_i-A_{4,3}e_{i+1}- \sum_{k=i+2}^n{a_{k-i+3,3}e_k},(4\leq i\leq n),\\
\displaystyle where\,\,A_{4,1}:=-a_{2,1}+\frac{(n-2)(a_{2,3}+b_{2,3})-b_{2,1}}{n-3}\,\,and\,\,A_{4,3}:=b_{2,3}+\frac{a_{2,3}}{n-3},
\end{array} 
\right.
\end{equation}
$$\L_{e_{n+1}}=\left[\begin{smallmatrix}
 -a & 0 & 0 & 0&0&&\cdots &0& \cdots&0 & 0&0 \\
  b_{2,1} & (n-4)a & b_{2,3}& 0 &0& & \cdots &0&\cdots  & 0& 0&0\\
  (n-2)a & 0 & (n-3)a & 0 & 0& &\cdots &0&\cdots &0& 0&0 \\
  -A_{4,1} & 0 &  -A_{4,3} & (n-4)a &0 & &\cdots&0&\cdots &0 & 0&0\\
  -a_{5,1} & 0 & -a_{5,3} & -A_{4,3} & (n-5)a  &&\cdots &0 &\cdots&0 & 0&0 \\
 \boldsymbol{\cdot} & \boldsymbol{\cdot} & \boldsymbol{\cdot} & -a_{5,3} & -A_{4,3}  &\ddots& &\vdots &&\vdots & \vdots&\vdots \\
  \vdots & \vdots & \vdots &\vdots &\vdots  &\ddots& \ddots&\vdots &&\vdots & \vdots&\vdots \\
  -a_{i,1} & 0 & -a_{i,3} & -a_{i-1,3} & -a_{i-2,3}  &\cdots&-A_{4,3}& (n-i)a&\cdots&0 & 0&0\\
   \vdots  & \vdots  & \vdots &\vdots &\vdots&&\vdots &\vdots &&\vdots  & \vdots&\vdots \\
 -a_{n-1,1} & 0 & -a_{n-1,3}& -a_{n-2,3}& -a_{n-3,3}&\cdots &-a_{n-i+3,3} &-a_{n-i+2,3}&\cdots&-A_{4,3} &a& 0\\
 -a_{n,1} & 0 & -a_{n,3}& -a_{n-1,3}& -a_{n-2,3}&\cdots &-a_{n-i+4,3}  &-a_{n-i+3,3}&\cdots&-a_{5,3} &-A_{4,3}& 0
\end{smallmatrix}\right].$$
 
\item[(3)] If $a=0$ and $b\neq0,(n\geq4),$ then
\begin{equation}
\left\{
\begin{array}{l}
\displaystyle  \nonumber [e_1,e_{n+1}]=a_{2,1}e_2+be_3+A_{4,1}e_4+\sum_{k=5}^n{a_{k,1}e_k}, [e_3,e_{n+1}]=
a_{2,3}e_2+be_3+\sum_{k=4}^n{a_{k,3}e_k},\\
\displaystyle [e_4,e_{n+1}]=-be_2+be_4+\sum_{k=5}^n{a_{k-1,3}e_k},
[e_{j},e_{n+1}]=be_{j}+\sum_{k=j+1}^n{a_{k-j+3,3}e_k},(5\leq j\leq n),\\
\displaystyle [e_{n+1},e_{n+1}]=a_{2,n+1}e_2,[e_{n+1},e_1]=b_{2,1}e_2-be_3-A_{4,1}e_4-\sum_{k=5}^n{a_{k,1}e_k},[e_{n+1},e_{2}]=-be_2,\\
\displaystyle[e_{n+1},e_3]=a_{4,3}e_2-be_3-\sum_{k=4}^n{a_{k,3}e_k},[e_{n+1},e_i]=-be_i-\sum_{k=i+1}^n{a_{k-i+3,3}e_k},(4\leq i\leq n),\\
\displaystyle where\,\,A_{4,1}:=-a_{2,1}+a_{4,3}+a_{2,3},
\end{array} 
\right.
\end{equation} 
$$\L_{e_{n+1}}=\left[\begin{smallmatrix}
 0 & 0 & 0 & 0&0&&\cdots &0& \cdots&0 & 0&0 \\
  b_{2,1} & -b & a_{4,3}& 0 &0& & \cdots &0&\cdots  & 0& 0&0\\
 -b & 0 & -b & 0 & 0& &\cdots &0&\cdots &0& 0&0 \\
  -A_{4,1} & 0 &  -a_{4,3} & -b &0 & &\cdots&0&\cdots &0 & 0&0\\
  -a_{5,1} & 0 & -a_{5,3} & -a_{4,3} & -b  &&\cdots &0 &\cdots&0 & 0&0 \\
 \boldsymbol{\cdot} & \boldsymbol{\cdot} & \boldsymbol{\cdot} & -a_{5,3} & -a_{4,3}  &\ddots& &\vdots &&\vdots & \vdots&\vdots \\
  \vdots & \vdots & \vdots &\vdots &\vdots  &\ddots& \ddots&\vdots &&\vdots & \vdots&\vdots \\
  -a_{i,1} & 0 & -a_{i,3} & -a_{i-1,3} & -a_{i-2,3}  &\cdots&-a_{4,3}& -b&\cdots&0 & 0&0\\
   \vdots  & \vdots  & \vdots &\vdots &\vdots&&\vdots &\vdots &&\vdots  & \vdots&\vdots \\
 -a_{n-1,1} & 0 & -a_{n-1,3}& -a_{n-2,3}& -a_{n-3,3}&\cdots &-a_{n-i+3,3} &-a_{n-i+2,3}&\cdots&-a_{4,3} &-b& 0\\
 -a_{n,1} & 0 & -a_{n,3}& -a_{n-1,3}& -a_{n-2,3}&\cdots &-a_{n-i+4,3}  &-a_{n-i+3,3}&\cdots&-a_{5,3} &-a_{4,3}& -b
\end{smallmatrix}\right].$$

   \allowdisplaybreaks
\item[(4)] If $b:=a,a\neq0,(n\geq4),$ then
\begin{equation}
\left\{
\begin{array}{l}
\displaystyle  \nonumber [e_1,e_{n+1}]=ae_1+a_{2,1}e_2-ae_3+\sum_{k=4}^n{a_{k,1}e_k},[e_{i},e_{n+1}]=\left(i-3\right)ae_{i}+\sum_{k=i+1}^n{a_{k-i+3,3}e_k},\\
\displaystyle 
(3\leq i\leq n),
[e_{n+1},e_{n+1}]=a_{2,n+1}e_2,[e_{n+1},e_1]=-ae_1+b_{2,3}e_2+ae_3-\sum_{k=4}^n{a_{k,1}e_k},\\
\displaystyle [e_{n+1},e_{2}]=-ae_2,[e_{n+1},e_3]=b_{2,3}e_2-\sum_{k=4}^n{a_{k,3}e_k},[e_{n+1},e_j]=\left(3-j\right)ae_j-\sum_{k=j+1}^n{a_{k-j+3,3}e_k},\\
\displaystyle (4\leq j\leq n),
\end{array} 
\right.
\end{equation} 
$$\L_{e_{n+1}}=\left[\begin{smallmatrix}
 -a & 0 & 0 & 0&0&&\cdots &0& \cdots&0 & 0&0 \\
  b_{2,3} & -a & b_{2,3}& 0 &0& & \cdots &0&\cdots  & 0& 0&0\\
  a & 0 & 0 & 0 & 0& &\cdots &0&\cdots &0& 0&0 \\
  -a_{4,1} & 0 &  -a_{4,3} & -a &0 & &\cdots&0&\cdots &0 & 0&0\\
  -a_{5,1} & 0 & -a_{5,3} & -a_{4,3} & -2a  &&\cdots &0 &\cdots&0 & 0&0 \\
 \boldsymbol{\cdot} & \boldsymbol{\cdot} & \boldsymbol{\cdot} & -a_{5,3} & -a_{4,3}  &\ddots& &\vdots &&\vdots & \vdots&\vdots \\
  \vdots & \vdots & \vdots &\vdots &\vdots  &\ddots& \ddots&\vdots &&\vdots & \vdots&\vdots \\
  -a_{i,1} & 0 & -a_{i,3} & -a_{i-1,3} & -a_{i-2,3}  &\cdots&-a_{4,3}& (3-i)a&\cdots&0 & 0&0\\
   \vdots  & \vdots  & \vdots &\vdots &\vdots&&\vdots &\vdots &&\vdots  & \vdots&\vdots \\
 -a_{n-1,1} & 0 & -a_{n-1,3}& -a_{n-2,3}& -a_{n-3,3}&\cdots &-a_{n-i+3,3} &-a_{n-i+2,3}&\cdots&-a_{4,3} &(4-n)a& 0\\
 -a_{n,1} & 0 & -a_{n,3}& -a_{n-1,3}& -a_{n-2,3}&\cdots &-a_{n-i+4,3}  &-a_{n-i+3,3}&\cdots&-a_{5,3} &-a_{4,3}& (3-n)a
\end{smallmatrix}\right].$$

\end{enumerate}
\end{theorem}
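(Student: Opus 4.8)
The plan is to mirror on the left the computation behind Theorem \ref{TheoremRL2}: start from the most general brackets $(\ref{BRLeibniz})$ compatible with $[l,l]\subseteq\mathcal{L}^2$, normalize the two scalars that survive on the diagonal by setting $a_{1,1}:=a$ and $b_{2,2}:=-b$, and then impose the left Leibniz identities $(\ref{LLeibniz})$--$(\ref{LLLeibniz})$ on all relevant triples of basis vectors, one after another, recording the resulting constraints in a table analogous to Table \ref{Right(L2)}. The four cases will fall out of the loci where the pivots produced by this elimination vanish.

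First I would use that, for a left Leibniz algebra, $\L_{e_{n+1}}$ is a derivation, so its restriction to the nilradical is an (outer) derivation of $\mathcal{L}^2$. Hence the first batch of identities to run are those coming from the triples $\{e_{n+1},e_i,e_j\}$ with $1\le i,j\le n$, i.e. $\L_{e_{n+1}}([e_i,e_j])=[\L_{e_{n+1}}(e_i),e_j]+[e_i,\L_{e_{n+1}}(e_j)]$. Feeding in the defining relations of $\mathcal{L}^2$ from $(\ref{L2})$ --- $[e_1,e_1]=e_2$, $[e_i,e_1]=e_{i+1}$ $(3\le i\le n-1)$, $[e_1,e_3]=e_2-e_4$, $[e_1,e_j]=-e_{j+1}$ --- pins down the matrix $\L_{e_{n+1}}$ displayed in the statement up to the free entries $a,b,b_{2,1},b_{2,3}$ and $a_{k,3}$ $(5\le k\le n)$, the subdiagonals being forced to be constant down each stripe exactly as in the right-hand computation; in particular the $(2,2)$ slot is forced to be $-b$, the $(1,1)$ slot to be $-a$, and the quantities $A_{4,1},A_{4,3}$ emerge as rational expressions in these parameters with denominators $a-b$ and $a$ respectively. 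Running the remaining identities --- the triples $\{e_i,e_j,e_{n+1}\}$, then $\{e_i,e_{n+1},e_{n+1}\}$, $\{e_{n+1},e_i,e_{n+1}\}$, $\{e_{n+1},e_{n+1},e_i\}$ and finally $\{e_{n+1},e_{n+1},e_{n+1}\}$ --- fixes the non-derivation brackets $[e_i,e_{n+1}]$ (multiplication by $e_{n+1}$ on the right, which is \emph{not} a derivation here), ties the $a_{k,1}$ to the $a_{k,3}$, and confines $[e_{n+1},e_{n+1}]$ to $\langle e_2\rangle$ in the generic branch.

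The case split then comes from the pivots. Solving for $A_{4,1}$ and $A_{4,3}$ requires dividing by $a$ and by $a-b$, which is legitimate only when $a\neq0$ and $b\neq a$: this is case (1). The locus $a=0$, $b\neq0$ must be handled separately because the first row of $\L_{e_{n+1}}$ collapses, giving case (3); the locus $b=a$, $a\neq0$ kills the $e_3$--$e_4$ coupling and changes which step is degenerate, giving case (4); and the locus $b=(4-n)a$, $a\neq0$ is special because the identity $\{e_{n+1},e_{n+1},e_{n+1}\}$ that normally forces $a_{n,n+1}=0$ in $[e_{n+1},e_{n+1}]$ relies on the eigenvalue combination $(4-n)a-b\neq0$; at $b=(4-n)a$ it becomes vacuous, so an extra parameter $a_{n,n+1}e_n$ survives, giving case (2). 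The subcase $a=b=0$ is discarded, since then $\L_{e_{n+1}}$ would be nilpotent, contradicting the nil-independence of the outer derivation --- this is exactly the condition that makes $\mathcal{L}^2$ the full nilradical and not a proper subalgebra.

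The main obstacle, as in Theorem \ref{TheoremRL2}, is the low-dimensional bookkeeping: for $n=4$ and $n=5$ several of the generic steps valid for indices $k\ge5$ become vacuous or merge, so these dimensions must be processed separately, replacing a couple of the uniform identities by ad hoc ones (as in the parenthetical remarks attached to the right-hand table), and one must then check that the resulting brackets still coincide with the uniform formulas in the statement. Beyond this there is no conceptual difficulty: once the table of steps is assembled, every individual entry is a short linear computation, and the four displayed continuous families are read off directly from the final form of $\L_{e_{n+1}}$ and the accompanying brackets.
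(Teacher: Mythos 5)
Your proposal follows essentially the same route as the paper: the published proof is precisely a tabulated sequence of left Leibniz identities (Table \ref{Left(L2)}) whose degenerate pivots produce the four cases you identify, with separate ad hoc bookkeeping for $n=4$ and with the identity $\L[e_{n+1},e_{n+1}]$ dropped when $b=(4-n)a$ so that the extra term $a_{n,n+1}e_n$ survives, exactly as you predict. One small correction: both $A_{4,1}$ and $A_{4,3}$ carry the denominator $a-b$; the hypothesis $a\neq0$ enters instead through a different deduction (e.g.\ $b_{n,3}:=-a_{n,3}$ in step $12.$ of the table), which is why $a=0$ must still be split off as its own case.
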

\begin{proof} 
\begin{enumerate}[noitemsep, topsep=2pt]
\item[(1)] Suppose $b\neq(4-n)a,a\neq0,b\neq a,(n\geq5).$ 
Then the proof is off-loaded to Table \ref{Left(L2)}. If $a\neq0,b\neq a,(n=4),$ then we consider applicable identities given in Table \ref{Left(L2)}. and calculations go the same way with one exception: 
in $12.$ we apply two identities:
$\L[e_5,e_3]=[\L(e_5),e_3]+[e_5,\L(e_3)]$ to obtain that $a_{1,5}=0$ and 
$\L[e_3,e_5]=[\L(e_3),e_5]+[e_3,\L(e_5)]$ to have that $b_{4,3}:=-a_{4,3}, a_{4,3}:=b_{2,3}+\frac{a\cdot a_{2,3}}{a-b}.$ 
\item[(2)]  Suppose $b:=(4-n)a,a\neq0,(n\geq5).$ We recalculate the identities given
in Table \ref{Left(L2)}., except the identity $15.$
\item[(3)] Suppose $a=0$ and $b\neq0.$ If $(n\geq5),$ then we apply the identities given in Table \ref{Left(L2)}.,
except $12.$ and $14,$ where we apply, respectively: $\L_{e_3}[e_{n+1},e_{n+1}]=[\L_{e_3}(e_{n+1}),e_{n+1}]+[e_{n+1},\L_{e_3}(e_{n+1})]$
and $\L[e_1,e_{n+1}]=[\L(e_1),e_{n+1}]+[e_1,\L(e_{n+1})]$ with the same results.
For $n=4,$ we apply the same identities as for $(n\geq5)$ except that in $12.$ we
apply two identities one after another: $\L[e_5,e_3]=[\L(e_5),e_3]+[e_5,\L(e_3)]$ and $\L_{e_3}[e_5,e_5]=[\L_{e_3}(e_5),e_5]+[e_5,\L_{e_3}(e_5)].$

\item[(4)] Suppose $b:=a,a\neq0.$ For $(n\geq5),$ we apply the same identities as in Table \ref{Left(L2)}.,
but we obtain slightly different conditions on the parameters in $12.$ and $14.$ The identity $12.$ gives that $a_{1,n+1}=0,b_{n,3}:=-a_{n,3},a_{2,3}=0$ and $14.$ that $b_{n,1}:=-a_{n,1},b_{2,1}:=b_{2,3}.$
For $n=4,$ the difference is that in $12.$ we apply the following two identities one after another:
$\L[e_5,e_3]=[\L(e_5),e_3]+[e_5,\L(e_3)]$ and $\L[e_3,e_5]=[\L(e_3),e_5]+[e_3,\L(e_5)].$
\end{enumerate}
\end{proof} 
\newpage
\begin{table}[h!]
\caption{Left Leibniz identities in the generic case in Theorem \ref{TheoremLL2}, ($n\geq5$).}
\label{Left(L2)}
\begin{tabular}{|l|p{2.4cm}|p{12cm}|}
\hline
\scriptsize Steps &\scriptsize Ordered triple &\scriptsize
Result\\ \hline
\scriptsize $1.$ &\scriptsize $\L_{e_1}\left([e_1,e_{n+1}]\right)$ &\scriptsize
$[e_{2},e_{n+1}]=0$
$\implies$ $a_{k,2}=0,(1\leq k\leq n).$\\ \hline
\scriptsize $2.$ &\scriptsize $\L_{e_{i}}\left([e_{n+1},e_{3}]\right)$ &\scriptsize
 $a_{1,i}=0,(3\leq i\leq n-1)\implies b_{1,3}=0$ $\implies$ 
 $[e_i,e_{n+1}]=\sum_{k=2}^n{a_{k,i}e_k},
 [e_{n+1},e_{3}]=\sum_{k=2}^n{b_{k,3}e_k}.$ \\ \hline
  \scriptsize $3.$ &\scriptsize $\L_{e_n}\left([e_{n+1},e_{3}]\right)$ &\scriptsize
 $a_{1,n}=0$ $\implies$ 
 $[e_{n},e_{n+1}]=\sum_{k=2}^n{a_{k,n}e_k}.$ Combining with $2.,$
 $[e_i,e_{n+1}]=\sum_{k=2}^n{a_{k,i}e_k},(3\leq i\leq n).$\\ \hline
  \scriptsize $4.$ &\scriptsize $\L[e_{1},e_{1}]$ &\scriptsize
 $b_{1,2}=0,b_{k,2}=0,(3\leq k\leq n),b_{3,1}:=-2b_{1,1}-b$ $\implies$ 
 $[e_{n+1},e_{2}]=-be_2,[e_{n+1},e_1]=b_{1,1}e_1+b_{2,1}e_2-(2b_{1,1}+b)e_3+\sum_{k=4}^n{b_{k,1}e_k}.$ \\ \hline
 \scriptsize $5.$ &\scriptsize $\L_{e_1}\left([e_{n+1},e_{1}]\right)$ &\scriptsize
 $b_{k-1,1}:=-a_{k-1,1},(5\leq k\leq n),b_{1,1}:=-a,a_{3,1}:=-2a+b$ $\implies$ 
 $[e_{1},e_{n+1}]=ae_1+a_{2,1}e_2-(2a-b)e_3+\sum_{k=4}^n{a_{k,1}e_k},
 [e_{n+1},e_1]=-ae_1+b_{2,1}e_2+(2a-b)e_3-\sum_{k=4}^{n-1}{a_{k,1}e_k}+b_{n,1}e_n.$ \\ \hline
 \scriptsize $6.$ &\scriptsize $\L_{e_1}\left([e_{3},e_{n+1}]\right)$ &\scriptsize
 $a_{k,4}:=a_{k-1,3},(5\leq k\leq n),a_{2,4}:=-a_{3,3},a_{3,4}=0,a_{4,4}:=a_{3,3}+a$ $\implies$ 
 $[e_{4},e_{n+1}]=-a_{3,3}e_2+(a+a_{3,3})e_4+\sum_{k=5}^n{a_{k-1,3}e_k}.$ \\ \hline
 \scriptsize $7.$ &\scriptsize $\L_{e_1}\left([e_{i},e_{n+1}]\right)$ &\scriptsize
 $a_{2,i+1}=a_{3,i+1}=a_{4,i+1}=0,a_{i+1,i+1}:=a_{3,3}+(i-2)a,a_{k,i+1}:=a_{k-1,i},(5\leq k\leq n,k\neq i+1,4\leq i\leq n-1)$ $\implies$ 
 $[e_{j},e_{n+1}]=\left(a_{3,3}+(j-3)a\right)e_j+\sum_{k=j+1}^n{a_{k-j+3,3}e_k},(5\leq j\leq n).$ \\ \hline
 \scriptsize $8.$ &\scriptsize $\L_{e_3}\left([e_{n+1},e_{i}]\right)$ &\scriptsize
 $b_{1,i}=0$ $\implies$ 
 $[e_{n+1},e_{i}]=\sum_{k=2}^n{b_{k,i}e_k},(4\leq i\leq n).$
 Combining with $2.,$ $[e_{n+1},e_{j}]=\sum_{k=2}^n{b_{k,j}e_k},(3\leq j\leq n)$. \\ \hline
 \scriptsize $9.$ &\scriptsize $\L_{e_1}\left([e_{n+1},e_{3}]\right)$ &\scriptsize
 $b_{2,4}:=a-b-b_{3,3},b_{4,4}:=b_{3,3}-a,b_{3,4}=0,b_{k,4}:=b_{k-1,3},(5\leq k\leq n)$ $\implies$ 
 $[e_{n+1},e_{4}]=\left(a-b-b_{3,3}\right)e_2+\left(b_{3,3}-a\right)e_4+\sum_{k=5}^n{b_{k-1,3}e_k}.$ \\ \hline
 \scriptsize $10.$ &\scriptsize $\L_{e_1}\left([e_{n+1},e_{i}]\right)$ &\scriptsize
 $b_{2,i+1}=b_{3,i+1}=b_{4,i+1}=0,b_{i+1,i+1}:=b_{3,3}-(i-2)a,b_{k,i+1}:=b_{k-1,i},(5\leq k\leq n,k\neq i+1,4\leq i\leq n-1)$ $\implies$ 
$[e_{n+1},e_j]=\left(b_{3,3}-(j-3)a\right)e_j+\sum_{k=j+1}^n{b_{k-j+3,3}e_k},(5\leq j\leq n).$ \\ \hline
  \scriptsize $11.$ &\scriptsize $\L_{e_3}\left([e_{n+1},e_{1}]\right)$ &\scriptsize
 $b_{3,3}:=a-b,a_{3,3}:=-a+b,b_{k-1,3}:=-a_{k-1,3},(5\leq k\leq n)$ $\implies$ 
$[e_3,e_{n+1}]=a_{2,3}e_2-(a-b)e_3+\sum_{k=4}^n{a_{k,3}e_k},
[e_{4},e_{n+1}]=(a-b)e_2+be_4+\sum_{k=5}^n{a_{k-1,3}e_k},
[e_{j},e_{n+1}]=\left((j-4)a+b\right)e_j+\sum_{k=j+1}^n{a_{k-j+3,3}e_k},(5\leq j\leq n),
[e_{n+1},e_{3}]=b_{2,3}e_2+(a-b)e_3-\sum_{k=4}^{n-1}{a_{k,3}e_k}+b_{n,3}e_n,
[e_{n+1},e_i]=\left((4-i)a-b\right)e_i-\sum_{k=i+1}^n{a_{k-i+3,3}e_k},(4\leq i\leq n).$\\ \hline
 \scriptsize $12.$ &\scriptsize $\L[e_{3},e_{n+1}]$ &\scriptsize
 $a_{1,n+1}=0;b_{n,3}:=-a_{n,3},(because\,\,a\neq0),a_{4,3}:=b_{2,3}+\frac{a\cdot a_{2,3}}{a-b},(a\neq b)$ $\implies$ 
 $[e_{n+1},e_{n+1}]=\sum_{k=2}^n{a_{k,n+1}e_k},
 [e_3,e_{n+1}]=a_{2,3}e_2-(a-b)e_3+A_{4,3}e_4+\sum_{k=5}^n{a_{k,3}e_k},
 [e_{4},e_{n+1}]=(a-b)e_2+be_4+A_{4,3}e_5+\sum_{k=6}^n{a_{k-1,3}e_k},
 [e_{j},e_{n+1}]=\left((j-4)a+b\right)e_j+A_{4,3}e_{j+1}+\sum_{k=j+2}^n{a_{k-j+3,3}e_k},(5\leq j\leq n),
 [e_{n+1},e_{3}]=b_{2,3}e_2+(a-b)e_3-A_{4,3}e_4-\sum_{k=5}^{n}{a_{k,3}e_k},
 [e_{n+1},e_i]=\left((4-i)a-b\right)e_i-A_{4,3}e_{i+1}-\sum_{k=i+2}^n{a_{k-i+3,3}e_k},(4\leq i\leq n),$
 where $A_{4,3}:=b_{2,3}+\frac{a\cdot a_{2,3}}{a-b}.$ \\ \hline
 \scriptsize $13.$ &\scriptsize $\L[e_{n+1},e_{1}]$ &\scriptsize
 $a_{k-1,n+1}=0,(4\leq k\leq n)$ $\implies$ 
 $[e_{n+1},e_{n+1}]=a_{2,n+1}e_2+a_{n,n+1}e_n.$ \\ \hline
  \scriptsize $14.$ &\scriptsize $\L_{e_1}\left([e_{n+1},e_{n+1}]\right)$ &\scriptsize
 $b_{n,1}:=-a_{n,1},a_{4,1}:=-a_{2,1}+\frac{(2a-b)(a_{2,3}+b_{2,3})-a\cdot b_{2,1}}{a-b}$ $\implies$ 
 $[e_{n+1},e_{1}]=-ae_1+b_{2,1}e_2+(2a-b)e_3-A_{4,1}e_4-\sum_{k=5}^{n}{a_{k,1}e_k},
 [e_{1},e_{n+1}]=ae_1+a_{2,1}e_2-(2a-b)e_3+A_{4,1}e_4+\sum_{k=5}^n{a_{k,1}e_k},$
 where $A_{4,1}:=-a_{2,1}+\frac{(2a-b)(a_{2,3}+b_{2,3})-a\cdot b_{2,1}}{a-b}.$ \\ \hline
 \scriptsize $15.$ &\scriptsize $\L[e_{n+1},e_{n+1}]$ &\scriptsize
 $a_{n,n+1}=0$ $\implies$ 
 $[e_{n+1},e_{n+1}]=a_{2,n+1}e_2.$ \\ \hline
\end{tabular}
\end{table}
\begin{theorem}\label{TheoremL(L2)Absorption} Applying the technique of ``absorption'' (see Section \ref{Solvable left Leibniz algebras}), we can further simplify the algebras
in each of the four cases in Theorem \ref{TheoremLL2} as follows:
\begin{enumerate}[noitemsep, topsep=0pt]
\item[(1)] If $a\neq0,b\neq a,(n=4)$ and $b\neq(4-n)a,a\neq0,b\neq a,(n\geq5),$ then
\begin{equation}
\left\{
\begin{array}{l}
\displaystyle  \nonumber [e_1,e_{n+1}]=ae_1+\left(a_{2,1}+A_{4,1}-A_{4,3}\right)e_2-(2a-b)e_3,
[e_3,e_{n+1}]=a_{2,3}e_2-(a-b)e_3+\\
\displaystyle \sum_{k=5}^n{a_{k,3}e_k},[e_4,e_{n+1}]=(a-b)e_2+be_4+\sum_{k=6}^n{a_{k-1,3}e_k},[e_{j},e_{n+1}]=\left((j-4)a+b\right)e_{j}+\\
\displaystyle \sum_{k=j+2}^n{a_{k-j+3,3}e_k},(5\leq j\leq n),[e_{n+1},e_{n+1}]=a_{2,n+1}e_2,[e_{n+1},e_1]=-ae_1+\left(b_{2,1}-A_{4,3}\right)e_2+\\
\displaystyle(2a-b)e_3,[e_{n+1},e_2]=-be_2,[e_{n+1},e_3]=\frac{a\cdot a_{2,3}}{b-a}e_2+(a-b)e_3-\sum_{k=5}^n{a_{k,3}e_k},\\
\displaystyle [e_{n+1},e_i]=\left((4-i)a-b\right)e_i-\sum_{k=i+2}^n{a_{k-i+3,3}e_k},(4\leq i\leq n),
\end{array} 
\right.
\end{equation} 
$$\L_{e_{n+1}}=\left[\begin{smallmatrix}
 -a & 0 & 0 & 0&0&0&\cdots && 0&0 & 0\\
  b_{2,1}-A_{4,3} & -b & \frac{a\cdot a_{2,3}}{b-a}& 0 &0&0 & \cdots &&0  & 0& 0\\
  2a-b & 0 & a-b & 0 & 0&0 &\cdots &&0 &0& 0\\
  0 & 0 &  0 & -b &0 &0 &\cdots&&0 &0 & 0\\
 0 & 0 & -a_{5,3} & 0 & -a-b  &0&\cdots & &0&0 & 0\\
  0 & 0 &\boldsymbol{\cdot} & -a_{5,3} & 0 &-2a-b&\cdots & &0&0 & 0\\
    0 & 0 &\boldsymbol{\cdot} & \boldsymbol{\cdot} & \ddots &0&\ddots &&\vdots&\vdots &\vdots\\
  \vdots & \vdots & \vdots &\vdots &  &\ddots&\ddots &\ddots &\vdots&\vdots & \vdots\\
 0 & 0 & -a_{n-2,3}& -a_{n-3,3}& \cdots&\cdots &-a_{5,3}&0&(6-n)a-b &0& 0\\
0 & 0 & -a_{n-1,3}& -a_{n-2,3}& \cdots&\cdots &\boldsymbol{\cdot}&-a_{5,3}&0 &(5-n)a-b& 0\\
 0 & 0 & -a_{n,3}& -a_{n-1,3}& \cdots&\cdots &\boldsymbol{\cdot}&\boldsymbol{\cdot}&-a_{5,3} &0& (4-n)a-b
\end{smallmatrix}\right].$$
\item[(2)] If $b:=(4-n)a,a\neq0,(n\geq5),$
then the brackets for the algebra are  
\begin{equation}
\left\{
\begin{array}{l}
\displaystyle  \nonumber [e_1,e_{n+1}]=ae_1+\left(a_{2,1}+A_{4,1}-A_{4,3}\right)e_2+(2-n)ae_3, [e_3,e_{n+1}]=a_{2,3}e_2+(3-n)ae_3+\\
\displaystyle \sum_{k=5}^n{a_{k,3}e_k},[e_4,e_{n+1}]=(n-3)ae_2+
(4-n)ae_4+ \sum_{k=6}^n{a_{k-1,3}e_k},[e_{j},e_{n+1}]=\left(j-n\right)ae_{j}+\\
\displaystyle \sum_{k=j+2}^n{a_{k-j+3,3}e_k},(5\leq j\leq n),[e_{n+1},e_{n+1}]=a_{n,n+1}e_n,[e_{n+1},e_1]=-ae_1+\left(b_{2,1}-A_{4,3}\right)e_2+\\
\displaystyle (n-2)ae_3,[e_{n+1},e_2]=(n-4)ae_2,[e_{n+1},e_3]=\frac{a_{2,3}}{3-n}e_2+(n-3)ae_3-\sum_{k=5}^n{a_{k,3}e_k},\\
\displaystyle [e_{n+1},e_i]=\left(n-i\right)ae_i-\sum_{k=i+2}^n{a_{k-i+3,3}e_k},(4\leq i\leq n),
\end{array} 
\right.
\end{equation}
$$\L_{e_{n+1}}=\left[\begin{smallmatrix}
 -a & 0 & 0 & 0&0&0&\cdots && 0&0 & 0\\
  b_{2,1}-A_{4,3} & (n-4)a & \frac{a_{2,3}}{3-n}& 0 &0&0 & \cdots &&0  & 0& 0\\
  (n-2)a & 0 & (n-3)a & 0 & 0&0 &\cdots &&0 &0& 0\\
  0 & 0 &  0 & (n-4)a &0 &0 &\cdots&&0 &0 & 0\\
 0 & 0 & -a_{5,3} & 0 & (n-5)a  &0&\cdots & &0&0 & 0\\
  0 & 0 &\boldsymbol{\cdot} & -a_{5,3} & 0 &(n-6)a&\cdots & &0&0 & 0\\
    0 & 0 &\boldsymbol{\cdot} & \boldsymbol{\cdot} & \ddots &0&\ddots &&\vdots&\vdots &\vdots\\
  \vdots & \vdots & \vdots &\vdots &  &\ddots&\ddots &\ddots &\vdots&\vdots & \vdots\\
 0 & 0 & -a_{n-2,3}& -a_{n-3,3}& \cdots&\cdots &-a_{5,3}&0&2a &0& 0\\
0 & 0 & -a_{n-1,3}& -a_{n-2,3}& \cdots&\cdots &\boldsymbol{\cdot}&-a_{5,3}&0 &a& 0\\
 0 & 0 & -a_{n,3}& -a_{n-1,3}& \cdots&\cdots &\boldsymbol{\cdot}&\boldsymbol{\cdot}&-a_{5,3} &0&0
\end{smallmatrix}\right].$$
 
\item[(3)] If $a=0$ and $b\neq0,(n\geq4),$ then
\begin{equation}
\left\{
\begin{array}{l}
\displaystyle  \nonumber [e_1,e_{n+1}]=a_{2,3}e_2+be_3, [e_3,e_{n+1}]=
a_{2,3}e_2+be_3+\sum_{k=5}^n{a_{k,3}e_k},[e_4,e_{n+1}]=-be_2+be_4+\\
\displaystyle \sum_{k=6}^n{a_{k-1,3}e_k},
[e_{j},e_{n+1}]=be_{j}+\sum_{k=j+2}^n{a_{k-j+3,3}e_k},(5\leq j\leq n),[e_{n+1},e_1]=b_{2,1}e_2-be_3,\\
\displaystyle [e_{n+1},e_{2}]=-be_2,[e_{n+1},e_i]=-be_i-\sum_{k=i+2}^n{a_{k-i+3,3}e_k},(3\leq i\leq n),
\end{array} 
\right.
\end{equation} 
$$\L_{e_{n+1}}=\left[\begin{smallmatrix}
 0 & 0 & 0 & 0&0&0&\cdots && 0&0 & 0\\
  b_{2,1} & -b & 0& 0 &0&0 & \cdots &&0  & 0& 0\\
 - b & 0 & -b & 0 & 0&0 &\cdots &&0 &0& 0\\
  0 & 0 &  0 & -b &0 &0 &\cdots&&0 &0 & 0\\
 0 & 0 & -a_{5,3} & 0 & -b  &0&\cdots & &0&0 & 0\\
  0 & 0 &\boldsymbol{\cdot} & -a_{5,3} & 0 &-b&\cdots & &0&0 & 0\\
    0 & 0 &\boldsymbol{\cdot} & \boldsymbol{\cdot} & \ddots &0&\ddots &&\vdots&\vdots &\vdots\\
  \vdots & \vdots & \vdots &\vdots &  &\ddots&\ddots &\ddots &\vdots&\vdots & \vdots\\
 0 & 0 & -a_{n-2,3}& -a_{n-3,3}& \cdots&\cdots &-a_{5,3}&0&-b &0& 0\\
0 & 0 & -a_{n-1,3}& -a_{n-2,3}& \cdots&\cdots &\boldsymbol{\cdot}&-a_{5,3}&0 &-b& 0\\
 0 & 0 & -a_{n,3}& -a_{n-1,3}& \cdots&\cdots &\boldsymbol{\cdot}&\boldsymbol{\cdot}&-a_{5,3} &0&-b
\end{smallmatrix}\right].$$

   \allowdisplaybreaks
\item[(4)] If $b:=a,a\neq0,(n\geq4),$ then
\begin{equation}
\left\{
\begin{array}{l}
\displaystyle  \nonumber [e_1,e_{n+1}]=ae_1+a_{2,1}e_2-ae_3,[e_{i},e_{n+1}]=\left(i-3\right)ae_{i}+\sum_{k=i+2}^n{a_{k-i+3,3}e_k},(3\leq i\leq n),\\
\displaystyle 
[e_{n+1},e_1]=-ae_1+b_{2,3}e_2+ae_3,[e_{n+1},e_{2}]=-ae_2,[e_{n+1},e_3]=b_{2,3}e_2-\sum_{k=5}^n{a_{k,3}e_k},\\
\displaystyle [e_{n+1},e_j]=\left(3-j\right)ae_j-\sum_{k=j+2}^n{a_{k-j+3,3}e_k},(4\leq j\leq n),
\end{array} 
\right.
\end{equation} 
$$\L_{e_{n+1}}=\left[\begin{smallmatrix}
 -a & 0 & 0 & 0&0&0&\cdots && 0&0 & 0\\
  b_{2,3} & -a & b_{2,3}& 0 &0&0 & \cdots &&0  & 0& 0\\
  a& 0 & 0 & 0 & 0&0 &\cdots &&0 &0& 0\\
  0 & 0 &  0 & -a &0 &0 &\cdots&&0 &0 & 0\\
 0 & 0 & -a_{5,3} & 0 & -2a  &0&\cdots & &0&0 & 0\\
  0 & 0 &\boldsymbol{\cdot} & -a_{5,3} & 0 &-3a&\cdots & &0&0 & 0\\
    0 & 0 &\boldsymbol{\cdot} & \boldsymbol{\cdot} & \ddots &0&\ddots &&\vdots&\vdots &\vdots\\
  \vdots & \vdots & \vdots &\vdots &  &\ddots&\ddots &\ddots &\vdots&\vdots & \vdots\\
 0 & 0 & -a_{n-2,3}& -a_{n-3,3}& \cdots&\cdots &-a_{5,3}&0&(5-n)a &0& 0\\
0 & 0 & -a_{n-1,3}& -a_{n-2,3}& \cdots&\cdots &\boldsymbol{\cdot}&-a_{5,3}&0 &(4-n)a& 0\\
 0 & 0 & -a_{n,3}& -a_{n-1,3}& \cdots&\cdots &\boldsymbol{\cdot}&\boldsymbol{\cdot}&-a_{5,3} &0& (3-n)a
\end{smallmatrix}\right].$$

\end{enumerate}
\end{theorem}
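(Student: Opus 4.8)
The plan is to run the same ``absorption'' argument that settled the right‑hand case in Theorem~\ref{TheoremRL2Absorption}, now tracking the left multiplication operator $\L_{e_{n+1}}$ (a derivation of $\mathcal{L}^2$ in the left setting) together with the auxiliary brackets $[e_i,e_{n+1}]$, $(1\le i\le n)$; these are the two pieces of data that a transformation $e'_i=e_i\ (1\le i\le n)$, $e'_{n+1}=e_{n+1}+\sum_{k=1}^n d^k e_k$ acts on, while the nilradical $\mathcal{L}^2$ in $(\ref{L2})$ is left unchanged. Because in a left Leibniz algebra $\r_{e_{n+1}}$ need not be a derivation, I would compute the changes induced in the $[e_i,e_{n+1}]$ directly from $[e_1,e_1]=e_2$, $[e_i,e_1]=e_{i+1}$, $[e_1,e_3]=e_2-e_4$, $[e_1,e_j]=-e_{j+1}$ rather than reading them off a derivation property; this bookkeeping is the one place where care is needed. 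I would begin by reading off from Theorem~\ref{TheoremLL2}, in each of the four cases, the two families of removable entries of $\L_{e_{n+1}}$: the common value on the sub‑diagonal positions $(i,i-1)$, $4\le i\le n$ (namely $A_{4,3}$ in cases~(1)--(2) and $a_{4,3}$ in cases~(3)--(4)), and the first‑column entries $-A_{4,1},-a_{5,1},\ldots,-a_{n,1}$ below the diagonal (with $a_{4,1}$ replacing $A_{4,1}$ in cases~(3)--(4)).

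The first substitution I would apply is $e'_{n+1}=e_{n+1}-(\text{the sub-diagonal value})\,e_1$: using $[e_1,e_3]=e_2-e_4$, $[e_1,e_j]=-e_{j+1}$, $[e_3,e_1]=e_4$, $[e_j,e_1]=e_{j+1}$ this cancels the sub‑diagonal of $\L_{e_{n+1}}$ and the matching constant appearing in the brackets $[e_j,e_{n+1}]$; the side effects are that the $(2,1)$ and $(2,3)$ entries get shifted (to $b_{2,1}-A_{4,3}$ and $b_{2,3}-A_{4,3}=\tfrac{a\,a_{2,3}}{b-a}$ in case~(1), and analogously in the others) and that the coefficient of $e_2$ in $[e_{n+1},e_{n+1}]$ changes by a combination of the cross‑terms $d^k([e_{n+1},e_k]+[e_k,e_{n+1}])$, which I would rename back to $a_{2,n+1}$; the quadratic terms $d^k d^l[e_k,e_l]$ contribute nothing, since $[e_1,e_1]$ is the only non‑zero such bracket and has already been accounted for. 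Next I would apply $e'_{n+1}=e_{n+1}+A_{4,1}e_3+\sum_{j=4}^{n-1}a_{j+1,1}e_j$ (with $a_{4,1}$ in place of $A_{4,1}$ in cases~(3)--(4)), which, by the same relations, removes $-A_{4,1},-a_{5,1},\ldots,-a_{n,1}$ from the first column of $\L_{e_{n+1}}$ and the matching terms from $[e_1,e_{n+1}]$, pushes the $e_2$‑coefficient of $[e_1,e_{n+1}]$ to $a_{2,1}+A_{4,1}-A_{4,3}$, and again only perturbs the $e_2$‑coefficient of $[e_{n+1},e_{n+1}]$, which I rename $a_{2,n+1}$; substituting the definitions of $A_{4,1}$, $a_{4,3}$ and the relation $a_{2,3}=0$ available in cases~(3)--(4) shows the coefficients collapse to the stated ones.

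For cases~(2), (3), (4), where $b=-b_{2,2}\neq 0$ forces $[e_{n+1},e_2]=-be_2\neq 0$, I would finish with $e'_{n+1}=e_{n+1}\pm\tfrac{a_{2,n+1}}{b}e_2$ (with $b=(n-4)a$ in case~(2)), which — using $[e_2,e_{n+1}]=0$ and $e_2\in C(\mathcal{L}^2)$ — removes $a_{2,n+1}e_2$ from $[e_{n+1},e_{n+1}]$ without disturbing anything else. In case~(1) the value $b=0$ is allowed, so this term cannot be cleared in general and is retained, exactly as in case~(1) of Theorem~\ref{TheoremRL2Absorption}; in case~(2) the term $a_{n,n+1}e_n$ is left untouched because $e_n\in C(\mathcal{L}^2)$ and the $e_n$‑slot on the diagonal of $\L_{e_{n+1}}$ vanishes, so no cross‑term can ever produce $e_n$. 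The $n=4$ instance I would treat separately, as in Theorem~\ref{TheoremLL2}: the brackets involving $e_5,\ldots$ are vacuous, so only the first‑row and first‑column adjustments survive. The hard part is purely organisational: after the two (or three) substitutions one must check that every bracket $[e_i,e_{n+1}]$, $[e_{n+1},e_i]$, $[e_{n+1},e_{n+1}]$ and the matrix $\L_{e_{n+1}}$ agrees term‑by‑term with the four displayed forms, and the side‑effect tracking through the nilradical relations is where an error is most likely to creep in; comparison with the already‑completed right‑hand case confirms that nothing structurally new arises.
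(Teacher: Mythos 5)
Your proposal follows essentially the same route as the paper's proof: the substitution $e^{\prime}_{n+1}=e_{n+1}-A_{4,3}e_1$ (resp.\ $-a_{4,3}e_1$) to clear the sub-diagonal of $\L_{e_{n+1}}$, then $e^{\prime}_{n+1}=e_{n+1}+A_{4,1}e_3+\sum a_{k+1,1}e_k$ to clear the first column, then the $e_2$-shift in the cases with $b\neq0$, together with the same observations about why $a_{2,n+1}e_2$ must survive in case (1) and $a_{n,n+1}e_n$ in case (2). Only two bookkeeping details are off: the relation $a_{2,3}=0$ holds in case (4) but not in case (3), where the collapse of the $(2,1)$-entry to $a_{2,3}$ comes instead from the definition $A_{4,1}=-a_{2,1}+a_{4,3}+a_{2,3}$, and the paper's second substitution in case (3) still uses $A_{4,1}$ rather than $a_{4,1}$.
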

\begin{proof}
\begin{enumerate}
\item[(1)] Suppose $a\neq0,b\neq a,(n=4)$ and $b\neq(4-n)a,a\neq0,b\neq a,(n\geq5).$ The right multiplication
operator (not a derivation) restricted to the nilradical is given below:
$$\r_{e_{n+1}}=\left[\begin{smallmatrix}
 a & 0 & 0 & 0&0&&\cdots &0& \cdots&0 & 0&0 \\
 a_{2,1} & 0 & a_{2,3}& a-b &0& & \cdots &0&\cdots  & 0& 0&0\\
  -2a+b & 0 & -a+b & 0 & 0& &\cdots &0&\cdots &0& 0&0 \\
  A_{4,1} & 0 &  A_{4,3} & b &0 & &\cdots&0&\cdots &0 & 0&0\\
  a_{5,1} & 0 & a_{5,3} & A_{4,3} & a+b  &&\cdots &0 &\cdots&0 & 0&0 \\
 \boldsymbol{\cdot} & \boldsymbol{\cdot} & \boldsymbol{\cdot} & a_{5,3} & A_{4,3}  &\ddots& &\vdots &&\vdots & \vdots&\vdots \\
  \vdots & \vdots & \vdots &\vdots &\vdots  &\ddots& \ddots&\vdots &&\vdots & \vdots&\vdots \\
  a_{i,1} & 0 & a_{i,3} & a_{i-1,3} & a_{i-2,3}  &\cdots&A_{4,3}& (i-4)a+b&\cdots&0 & 0&0\\
   \vdots  & \vdots  & \vdots &\vdots &\vdots&&\vdots &\vdots &&\vdots  & \vdots&\vdots \\
 a_{n-1,1} & 0 & a_{n-1,3}& a_{n-2,3}& a_{n-3,3}&\cdots &a_{n-i+3,3} &a_{n-i+2,3}&\cdots&A_{4,3} &(n-5)a+b& 0\\
 a_{n,1} & 0 & a_{n,3}& a_{n-1,3}& a_{n-2,3}&\cdots &a_{n-i+4,3}  &a_{n-i+3,3}&\cdots&a_{5,3} &A_{4,3}& (n-4)a+b
\end{smallmatrix}\right].$$
 \begin{itemize}
\item The transformation $e^{\prime}_k=e_k,(1\leq k\leq n),e^{\prime}_{n+1}=e_{n+1}-A_{4,3}e_1$
removes $A_{4,3}$ in $\r_{e_{n+1}}$ and $-A_{4,3}$ in $\L_{e_{n+1}}$ from the $(i,i-1)^{st}$ positions, where $(4\leq i\leq n),$ 
 but it affects other entries as well,
such as
the entries in the $(2,1)^{st}$ position in $\r_{e_{n+1}}$ and $\L_{e_{n+1}},$
which we change to $a_{2,1}-A_{4,3}$ and $b_{2,1}-A_{4,3},$ respectively.
It also changes the entry in the $(2,3)^{rd}$ position in $\L_{e_{n+1}}$ to 
$\frac{a\cdot a_{2,3}}{b-a}.$
At the same time, it affects the coefficient in front of $e_2$ in the bracket $[e_{n+1},e_{n+1}],$ which we change back to $a_{2,n+1}$.
\item Then we apply the transformation $e^{\prime}_i=e_i,(1\leq i\leq n),e^{\prime}_{n+1}=e_{n+1}+A_{4,1}e_3+\sum_{k=4}^{n-1}a_{k+1,1}e_{k}$
to remove $A_{4,1},a_{k+1,1}$ in $\r_{e_{n+1}}$ and $-A_{4,1},-a_{k+1,1}$ in $\L_{e_{n+1}}$ from the entries in the $(4,1)^{st}$ and $(k+1,1)^{st}$
positions, where $(4\leq k\leq n-1).$ It changes the entry in the $(2,1)^{st}$ position in
$\r_{e_{n+1}}$ to $a_{2,1}+A_{4,1}-A_{4,3}$
and the coefficient in front of $e_2$ in $[e_{n+1},e_{n+1}],$ which
we rename back by $a_{2,n+1}.$
\end{itemize}
\allowdisplaybreaks
\item[(2)] Suppose $b:=(4-n)a,a\neq0,(n\geq5).$ The right multiplication
operator (not a derivation) restricted to the nilradical is given below:
$$\r_{e_{n+1}}=\left[\begin{smallmatrix}
 a & 0 & 0 & 0&0&&\cdots &0& \cdots&0 & 0&0 \\
 a_{2,1} & 0 & a_{2,3}& (n-3)a &0& & \cdots &0&\cdots  & 0& 0&0\\
  (2-n)a & 0 & (3-n)a & 0 & 0& &\cdots &0&\cdots &0& 0&0 \\
  A_{4,1} & 0 &  A_{4,3} & (4-n)a &0 & &\cdots&0&\cdots &0 & 0&0\\
  a_{5,1} & 0 & a_{5,3} & A_{4,3} & (5-n)a  &&\cdots &0 &\cdots&0 & 0&0 \\
 \boldsymbol{\cdot} & \boldsymbol{\cdot} & \boldsymbol{\cdot} & a_{5,3} & A_{4,3}  &\ddots& &\vdots &&\vdots & \vdots&\vdots \\
  \vdots & \vdots & \vdots &\vdots &\vdots  &\ddots& \ddots&\vdots &&\vdots & \vdots&\vdots \\
  a_{i,1} & 0 & a_{i,3} & a_{i-1,3} & a_{i-2,3}  &\cdots&A_{4,3}& (i-n)a&\cdots&0 & 0&0\\
   \vdots  & \vdots  & \vdots &\vdots &\vdots&&\vdots &\vdots &&\vdots  & \vdots&\vdots \\
 a_{n-1,1} & 0 & a_{n-1,3}& a_{n-2,3}& a_{n-3,3}&\cdots &a_{n-i+3,3} &a_{n-i+2,3}&\cdots&A_{4,3} &-a& 0\\
 a_{n,1} & 0 & a_{n,3}& a_{n-1,3}& a_{n-2,3}&\cdots &a_{n-i+4,3}  &a_{n-i+3,3}&\cdots&a_{5,3} &A_{4,3}& 0
\end{smallmatrix}\right].$$

 \begin{itemize}
\item The transformation $e^{\prime}_k=e_k,(1\leq k\leq n),e^{\prime}_{n+1}=e_{n+1}-A_{4,3}e_1$
removes $A_{4,3}$ and $-A_{4,3}$ in $\r_{e_{n+1}}$ and $\L_{e_{n+1}},$ respectively, from the $(i,i-1)^{st}$ positions, where $(4\leq i\leq n),$ 
 but other entries are affected as well,
such as
the entries in the $(2,1)^{st}$ position in $\r_{e_{n+1}}$ and $\L_{e_{n+1}},$
which we change to $a_{2,1}-A_{4,3}$ and $b_{2,1}-A_{4,3},$ respectively.
This transformation also changes the entry in the $(2,3)^{rd}$ position in $\L_{e_{n+1}}$ to 
$\frac{a_{2,3}}{3-n}.$
At the same time, it affects the coefficient in front of $e_2$ in the bracket $[e_{n+1},e_{n+1}],$ which we change back to $a_{2,n+1}$.
\item Then we apply the transformation $e^{\prime}_i=e_i,(1\leq i\leq n),e^{\prime}_{n+1}=e_{n+1}+A_{4,1}e_3+\sum_{k=4}^{n-1}a_{k+1,1}e_{k}$
to remove $A_{4,1},a_{k+1,1}$ in $\r_{e_{n+1}}$ and $-A_{4,1},-a_{k+1,1}$ in $\L_{e_{n+1}}$ from the entries in the $(4,1)^{st}$ and $(k+1,1)^{st}$
positions, where $(4\leq k\leq n-1).$ The transformation changes the entry in the $(2,1)^{st}$ position in
$\r_{e_{n+1}}$ to $a_{2,1}+A_{4,1}-A_{4,3}$
and the coefficient in front of $e_2$ in $[e_{n+1},e_{n+1}],$ which
we rename back by $a_{2,n+1}.$
\item Applying the transformation $e^{\prime}_j=e_j,(1\leq j\leq n),e^{\prime}_{n+1}=e_{n+1}-\frac{a_{2,n+1}}{(n-4)a}e_2,$ we
remove the coefficient $a_{2,n+1}$ in front of $e_2$ in $[e_{n+1},e_{n+1}]$ and we prove the result.
\end{itemize}
\item[(3)] Suppose $a=0$ and $b\neq0,(n\geq4).$ The right multiplication operator (not a derivation) restricted to the nilradical is given below: 
$$\r_{e_{n+1}}=\left[\begin{smallmatrix}
 0 & 0 & 0 & 0&0&&\cdots &0& \cdots&0 & 0&0 \\
 a_{2,1} & 0 & a_{2,3}& -b &0& & \cdots &0&\cdots  & 0& 0&0\\
  b & 0 & b & 0 & 0& &\cdots &0&\cdots &0& 0&0 \\
  A_{4,1} & 0 &  a_{4,3} & b &0 & &\cdots&0&\cdots &0 & 0&0\\
  a_{5,1} & 0 & a_{5,3} & a_{4,3} & b  &&\cdots &0 &\cdots&0 & 0&0 \\
 \boldsymbol{\cdot} & \boldsymbol{\cdot} & \boldsymbol{\cdot} & a_{5,3} & a_{4,3}  &\ddots& &\vdots &&\vdots & \vdots&\vdots \\
  \vdots & \vdots & \vdots &\vdots &\vdots  &\ddots& \ddots&\vdots &&\vdots & \vdots&\vdots \\
  a_{i,1} & 0 & a_{i,3} & a_{i-1,3} & a_{i-2,3}  &\cdots&a_{4,3}&b&\cdots&0 & 0&0\\
   \vdots  & \vdots  & \vdots &\vdots &\vdots&&\vdots &\vdots &&\vdots  & \vdots&\vdots \\
 a_{n-1,1} & 0 & a_{n-1,3}& a_{n-2,3}& a_{n-3,3}&\cdots &a_{n-i+3,3} &a_{n-i+2,3}&\cdots&a_{4,3} &b& 0\\
 a_{n,1} & 0 & a_{n,3}& a_{n-1,3}& a_{n-2,3}&\cdots &a_{n-i+4,3}  &a_{n-i+3,3}&\cdots&a_{5,3} &a_{4,3}& b
\end{smallmatrix}\right].$$
 \begin{itemize}
\item The transformation $e^{\prime}_k=e_k,(1\leq k\leq n),e^{\prime}_{n+1}=e_{n+1}-a_{4,3}e_1$ first removes 
$a_{4,3}$ from the $(2,3)^{rd}$ position in $\L_{e_{n+1}}$, then it
removes $a_{4,3}$ and $-a_{4,3}$ in $\r_{e_{n+1}}$ and $\L_{e_{n+1}},$ respectively, from the $(i,i-1)^{st}$ positions, where $(4\leq i\leq n).$ 
 The entries affected by the transformation are
in the $(2,1)^{st}$ positions in $\r_{e_{n+1}}$ and $\L_{e_{n+1}},$
which we change to $a_{2,1}-a_{4,3}$ and $b_{2,1}-a_{4,3},$ respectively. We assign $b_{2,1}-a_{4,3}:=b_{2,1}$.
At the same time, the transformation affects the coefficient in front of $e_2$ in the bracket $[e_{n+1},e_{n+1}],$ which we change back to $a_{2,n+1}$.
\item Applying the transformation $e^{\prime}_i=e_i,(1\leq i\leq n),e^{\prime}_{n+1}=e_{n+1}+A_{4,1}e_3+\sum_{k=4}^{n-1}a_{k+1,1}e_{k},$
we remove $A_{4,1},a_{k+1,1}$ in $\r_{e_{n+1}}$ and $-A_{4,1},-a_{k+1,1}$ in $\L_{e_{n+1}}$ from the entries in the $(4,1)^{st}$ and $(k+1,1)^{st}$
positions, where $(4\leq k\leq n-1).$ This transformation changes the entry in the $(2,1)^{st}$ position in
$\r_{e_{n+1}}$ to $a_{2,3}$
and the coefficient in front of $e_2$ in $[e_{n+1},e_{n+1}],$ which we rename back by $a_{2,n+1}.$
\item The transformation $e^{\prime}_j=e_j,(1\leq j\leq n),e^{\prime}_{n+1}=e_{n+1}+\frac{a_{2,n+1}}{b}e_2$ 
removes the coefficient $a_{2,n+1}$ in front of $e_2$ in $[e_{n+1},e_{n+1}]$ and we prove the result.
\end{itemize}
\item[(4)] Suppose $b:=a,a\neq0,(n\geq4).$ The right multiplication
operator (not a derivation) restricted to the nilradical is given below:
$$\r_{e_{n+1}}=\left[\begin{smallmatrix}
 a & 0 & 0 & 0&0&&\cdots &0& \cdots&0 & 0&0 \\
 a_{2,1} & 0 & 0& 0 &0& & \cdots &0&\cdots  & 0& 0&0\\
  -a & 0 & 0 & 0 & 0& &\cdots &0&\cdots &0& 0&0 \\
  a_{4,1} & 0 &  a_{4,3} & a &0 & &\cdots&0&\cdots &0 & 0&0\\
  a_{5,1} & 0 & a_{5,3} & a_{4,3} & 2a  &&\cdots &0 &\cdots&0 & 0&0 \\
 \boldsymbol{\cdot} & \boldsymbol{\cdot} & \boldsymbol{\cdot} & a_{5,3} & a_{4,3}  &\ddots& &\vdots &&\vdots & \vdots&\vdots \\
  \vdots & \vdots & \vdots &\vdots &\vdots  &\ddots& \ddots&\vdots &&\vdots & \vdots&\vdots \\
  a_{i,1} & 0 & a_{i,3} & a_{i-1,3} & a_{i-2,3}  &\cdots&a_{4,3}& (i-3)a&\cdots&0 & 0&0\\
   \vdots  & \vdots  & \vdots &\vdots &\vdots&&\vdots &\vdots &&\vdots  & \vdots&\vdots \\
 a_{n-1,1} & 0 & a_{n-1,3}& a_{n-2,3}& a_{n-3,3}&\cdots &a_{n-i+3,3} &a_{n-i+2,3}&\cdots&a_{4,3} &(n-4)a& 0\\
 a_{n,1} & 0 & a_{n,3}& a_{n-1,3}& a_{n-2,3}&\cdots &a_{n-i+4,3}  &a_{n-i+3,3}&\cdots&a_{5,3} &a_{4,3}& (n-3)a
\end{smallmatrix}\right].$$
\begin{itemize}
\item The transformation $e^{\prime}_k=e_k,(1\leq k\leq n),e^{\prime}_{n+1}=e_{n+1}-a_{4,3}e_1$
removes $a_{4,3}$ and $-a_{4,3}$ from the $(i,i-1)^{st},(4\leq i\leq n)$ positions in $\r_{e_{n+1}}$ and $\L_{e_{n+1}},$ respectively,
affecting other entries as well,
such as
the entries in the $(2,1)^{st}$ positions in $\r_{e_{n+1}}$ and $\L_{e_{n+1}}$ and in the $(2,3)^{rd}$ position in $\L_{e_{n+1}},$
which we change to $a_{2,1}-a_{4,3}$ in $\r_{e_{n+1}}$ and to $b_{2,3}-a_{4,3}$ in $\L_{e_{n+1}}$.
At the same time, it affects the coefficient in front of $e_2$ in the bracket $[e_{n+1},e_{n+1}],$ which we change back to $a_{2,n+1}$.
\item Then we apply the transformation $e^{\prime}_i=e_i,(1\leq i\leq n),e^{\prime}_{n+1}=e_{n+1}+\sum_{k=3}^{n-1}a_{k+1,1}e_{k}$
to remove $a_{k+1,1}$ in $\r_{e_{n+1}}$ and $-a_{k+1,1}$ in $\L_{e_{n+1}}$ from the entries in the $(k+1,1)^{st}$
positions, where $(3\leq k\leq n-1).$ It changes the entry in the $(2,1)^{st}$ position in
$\r_{e_{n+1}}$ to $a_{2,1}+a_{4,1}-a_{4,3}$
and the coefficient in front of $e_2$ in $[e_{n+1},e_{n+1}],$ which
we rename back by $a_{2,n+1}.$
\item We assign $a_{2,1}+a_{4,1}-a_{4,3}:=a_{2,1}$ and $b_{2,3}-a_{4,3}:=b_{2,3}.$
Applying the transformation $e^{\prime}_i=e_i,(1\leq i\leq n),e^{\prime}_{n+1}=e_{n+1}+\frac{a_{2,n+1}}{a}e_2,$ we
remove the coefficient $a_{2,n+1}$ in front of $e_2$ in $[e_{n+1},e_{n+1}].$
\end{itemize}
\end{enumerate}
\end{proof}
\allowdisplaybreaks
 \begin{theorem}\label{TheoremL(L2)Basis}There are four solvable
indecomposable left Leibniz algebras up to isomorphism with a codimension one nilradical
$\mathcal{L}^2,(n\geq4),$ which are given below:
\begin{equation}
\begin{array}{l}
\displaystyle  \nonumber (i)\,l_{n+1,1}: [e_1,e_{n+1}]=e_1+(a-2)e_3,
[e_3,e_{n+1}]=(a-1)e_3, [e_4,e_{n+1}]=(1-a)e_2+ae_4,\\
\displaystyle[e_{j},e_{n+1}]=\left(a+j-4\right)e_{j},(5\leq j\leq n),[e_{n+1},e_1]=-e_1+(2-a)e_3,[e_{n+1},e_2]=-ae_2,\\
\displaystyle[e_{n+1},e_i]=\left(4-i-a\right)e_i,(3\leq i\leq n);DS=[n+1,n,n-2,0],LS=[n+1,n,n,...],\\
\displaystyle (ii)\,l_{n+1,2}: [e_1,e_{n+1}]=e_1-2e_3+\delta e_5,
[e_3,e_{n+1}]=-e_3, [e_4,e_{n+1}]=e_2,[e_{j},e_{n+1}]=\left(j-4\right)e_{j},\\
\displaystyle [e_{n+1},e_1]=-e_1+2e_3-\delta e_5, [e_{n+1},e_i]=\left(4-i\right)e_i,(\delta=\pm1,3\leq i\leq n,5\leq j\leq n,n\geq5),\\
\displaystyle  DS=[n+1,n,n-2,0],LS=[n+1,n,n,...],\\
\displaystyle (iii)\,l_{n+1,3}:[e_1,e_{n+1}]=e_1+(2-n)e_3, [e_3,e_{n+1}]=(3-n)e_3,[e_4,e_{n+1}]=(n-3)e_2+
(4-n)e_4,\\
\displaystyle [e_{j},e_{n+1}]=\left(j-n\right)e_{j},(5\leq j\leq n-1),[e_{n+1},e_{n+1}]=\delta e_n,[e_{n+1},e_1]=-e_1+(n-2)e_3,\\
\displaystyle [e_{n+1},e_2]=(n-4)e_2,[e_{n+1},e_i]=\left(n-i\right)e_i,(\delta=\pm1,3\leq i\leq n-1,n\geq5),\\
\displaystyle  DS=[n+1,n,n-2,0],LS=[n+1,n,n,...],\\
\displaystyle \nonumber (iv)\,l_{n+1,4}: [e_1,e_{n+1}]=e_3, [e_3,e_{n+1}]=
e_3+\epsilon e_5+\sum_{k=6}^n{b_{k-5}e_k},\\
\displaystyle [e_4,e_{n+1}]=-e_2+e_4+\epsilon e_6+ \sum_{k=7}^n{b_{k-6}e_k},[e_{j},e_{n+1}]=e_{j}+\epsilon e_{j+2}+\sum_{k=j+3}^n{b_{k-j-2}e_k},
\end{array} 
\end{equation}  
\begin{equation}
\begin{array}{l}
\displaystyle \nonumber [e_{n+1},e_1]=-e_3,[e_{n+1},e_{2}]=-e_2,[e_{n+1},e_i]=-e_i-\epsilon e_{i+2}-\sum_{k=i+3}^n{b_{k-i-2}e_k},\\
\displaystyle (\epsilon=0,\pm1,3\leq i\leq n,5\leq j\leq n);DS=[n+1,n-1,0],LS=[n+1,n-1,n-1,...].
\end{array} 
\end{equation}   
\begin{remark}
We remember that the right hand-side of each bracket is the span of the basis vectors in the nilradical.
\end{remark}
\end{theorem}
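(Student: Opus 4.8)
The plan is to mimic the proof of Theorem~\ref{RL2(Change of Basis)}, starting now from the four absorbed families listed in Theorem~\ref{TheoremL(L2)Absorption} and removing the surviving parameters by change-of-basis transformations that keep the brackets of $\mathcal{L}^2$ in $(\ref{L2})$ fixed. In each case I would first record, alongside the derivation $\L_{e_{n+1}}$, the (non-derivation) right multiplication operator $\r_{e_{n+1}}$ restricted to the nilradical, because a shift $e'_{n+1}=e_{n+1}+\sum_{k=1}^{n}d^{k}e_{k}$ only moves the matrices $\L_{e_{n+1}}$ and $\r_{e_{n+1}}$ by inner derivations of $\mathcal{L}^2$; since $\r_{e_1},\r_{e_3},\r_{e_i}$ $(4\le i\le n-1)$ have support in the first column (and in the $e_3$-column for $\r_{e_3}$), such shifts let me sweep first-column entries in a controlled way.

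Then, case by case. In case~(1): use $e'_i=e_i-\frac{a_{k-i+3,3}}{(k-i)a}e_k$ (for $3\le i\le n-2$, $k$ fixed) to kill all $a_{k,3}$ with $k\ge5$; follow with a transformation on $e_1,e_3,e_{n+1}$ removing the $(2,1)$ and $(2,3)$ entries of $\L_{e_{n+1}}$ and $\r_{e_{n+1}}$ together with the $e_2$-coefficient $a_{2,n+1}$ of $[e_{n+1},e_{n+1}]$; then scale $a$ to $1$ via $e'_{n+1}=e_{n+1}/a$ and rename $b/a$ to $b$. This leaves a one-parameter family plus a residual coefficient $c$ coming from $a_{5,1}/a$; if $b\neq0$ the transformation $e'_1=e_1-\frac{c}{b}e_5$ removes $c$ and yields $l_{n+1,1}$ (after finally renaming $b$ to $a$ and absorbing the $b=a$ and $b=4-n$ limiting values), while if $b=0$ the transformation $e'_i=(\pm c)^{(i-2)/2}e_i$ scales $c$ to $\delta=\pm1$ and gives $l_{n+1,2}$. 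In case~(2) the same reductions, plus a transformation removing $a_{5,1}$, leave a single parameter $a_{n,n+1}$: if it vanishes we land in a limiting case of $l_{n+1,1}$, otherwise it scales to $\pm1$ and produces $l_{n+1,3}$. In case~(3) I would clear the $(2,1)$ and $(2,3)$ entries, scale $b$ to $1$, scale $a_{5,3}$ to $\pm1$ when nonzero, and rename $a_{6,3},\dots,a_{n,3}$ to $b_1,\dots,b_{n-5}$ to obtain $l_{n+1,4}$ with $\epsilon=0,\pm1$. In case~(4) the analogous clean-up together with scaling $a$ to $1$ produces exactly the $a=1$ member of the $l_{n+1,1}$ family, so this case contributes nothing new. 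Finally I would compute the derived and lower central series of each algebra to record the characteristic series claimed, observe that these invariants (and the differing parameter ranges) force the four families to be pairwise non-isomorphic and to be indecomposable, and check that $\r_{e_{n+1}}$ is not a derivation of $\mathcal{L}^2$ (the asymmetry of the $(2,3)$/$(2,1)$ entries between $\r_{e_{n+1}}$ and $\L_{e_{n+1}}$ already obstructs it), so none of them is a right Leibniz algebra.

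The main obstacle is the bookkeeping: each transformation introduced to annihilate one block of structure constants disturbs several others --- first-column entries of $\L_{e_{n+1}}$ and $\r_{e_{n+1}}$, the $e_2$- and $e_n$-coefficients of $[e_{n+1},e_{n+1}]$, and the $(2,1),(2,3)$ positions --- so the real work is to order the steps so that every freshly created term is removable by a later step and to confirm that the stated normal forms are actually attained for all $n\ge4$. A secondary subtlety is the low-dimensional exceptions ($n=4$, and the vanishing of the $a_{6,3}$-type term when $n=5$), which must be handled separately, exactly as in Theorems~\ref{TheoremLL2} and~\ref{TheoremL(L2)Absorption}.
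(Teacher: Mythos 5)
Your proposal follows essentially the same route as the paper: case by case from the four absorbed families of Theorem \ref{TheoremL(L2)Absorption}, the same nilradical-preserving transformations in the same order (clearing the $a_{k,3}$ tail, then the $(2,3)$ and $(2,1)$ entries and $a_{2,n+1}$, then scaling $a$ or $b$ to unity), the same splitting into $b\neq0$ versus $b=0$ and $a_{n,n+1}=0$ versus $a_{n,n+1}\neq0$, and the same final step of collecting the $b=0$, $b=1$, $b=4-n$ limiting cases into the single family $l_{n+1,1}$. This matches the paper's proof, so no further comparison is needed.
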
\begin{proof}
\begin{enumerate}
\allowdisplaybreaks
\item[(1)] Suppose $a\neq0,b\neq a,(n=4)$ and $b\neq(4-n)a,a\neq0,b\neq a,(n\geq5).$ The right (not a derivation) and the left (a derivation) multiplication operators restricted to the nilradical are given below:
$$\r_{e_{n+1}}=\left[\begin{smallmatrix}
 a & 0 & 0 & 0&0&0&\cdots && 0&0 & 0\\
 a_{2,1}+A_{4,1}-A_{4,3} & 0 & a_{2,3}& a-b &0&0 & \cdots &&0  & 0& 0\\
  -2a+b & 0 & -a+b & 0 & 0&0 &\cdots &&0 &0& 0\\
  0 & 0 &  0 & b &0 &0 &\cdots&&0 &0 & 0\\
 0 & 0 & a_{5,3} & 0 & a+b  &0&\cdots & &0&0 & 0\\
  0 & 0 &\boldsymbol{\cdot} & a_{5,3} & 0 &2a+b&\cdots & &0&0 & 0\\
    0 & 0 &\boldsymbol{\cdot} & \boldsymbol{\cdot} & \ddots &0&\ddots &&\vdots&\vdots &\vdots\\
  \vdots & \vdots & \vdots &\vdots &  &\ddots&\ddots &\ddots &\vdots&\vdots & \vdots\\
 0 & 0 & a_{n-2,3}& a_{n-3,3}& \cdots&\cdots &a_{5,3}&0&(n-6)a+b &0& 0\\
0 & 0 & a_{n-1,3}& a_{n-2,3}& \cdots&\cdots &\boldsymbol{\cdot}&a_{5,3}&0 &(n-5)a+b& 0\\
 0 & 0 & a_{n,3}& a_{n-1,3}& \cdots&\cdots &\boldsymbol{\cdot}&\boldsymbol{\cdot}&a_{5,3} &0& (n-4)a+b
\end{smallmatrix}\right],$$
$$\L_{e_{n+1}}=\left[\begin{smallmatrix}
 -a & 0 & 0 & 0&0&0&\cdots && 0&0 & 0\\
  b_{2,1}-A_{4,3} & -b & \frac{a\cdot a_{2,3}}{b-a}& 0 &0&0 & \cdots &&0  & 0& 0\\
  2a-b & 0 & a-b & 0 & 0&0 &\cdots &&0 &0& 0\\
  0 & 0 &  0 & -b &0 &0 &\cdots&&0 &0 & 0\\
 0 & 0 & -a_{5,3} & 0 & -a-b  &0&\cdots & &0&0 & 0\\
  0 & 0 &\boldsymbol{\cdot} & -a_{5,3} & 0 &-2a-b&\cdots & &0&0 & 0\\
    0 & 0 &\boldsymbol{\cdot} & \boldsymbol{\cdot} & \ddots &0&\ddots &&\vdots&\vdots &\vdots\\
  \vdots & \vdots & \vdots &\vdots &  &\ddots&\ddots &\ddots &\vdots&\vdots & \vdots\\
 0 & 0 & -a_{n-2,3}& -a_{n-3,3}& \cdots&\cdots &-a_{5,3}&0&(6-n)a-b &0& 0\\
0 & 0 & -a_{n-1,3}& -a_{n-2,3}& \cdots&\cdots &\boldsymbol{\cdot}&-a_{5,3}&0 &(5-n)a-b& 0\\
 0 & 0 & -a_{n,3}& -a_{n-1,3}& \cdots&\cdots &\boldsymbol{\cdot}&\boldsymbol{\cdot}&-a_{5,3} &0& (4-n)a-b
\end{smallmatrix}\right].$$

\begin{itemize}[noitemsep, topsep=0pt]
\allowdisplaybreaks 
\item We apply the transformation $e^{\prime}_1=e_1,e^{\prime}_2=e_2,e^{\prime}_i=e_i-\frac{a_{k-i+3,3}}{(k-i)a}e_k,(3\leq i\leq n-2,
i+2\leq k\leq n,n\geq5),e^{\prime}_{j}=e_{j},(n-1\leq j\leq n+1),$ where $k$ is fixed, renaming all the affected entries back.
This transformation removes $a_{5,3},a_{6,3},...,a_{n,3}$ in $\r_{e_{n+1}}$ and $-a_{5,3},-a_{6,3},...,-a_{n,3}$ in $\L_{e_{n+1}}.$
Besides it introduces the entries in the $(5,1)^{st},(6,1)^{st},...,(n,1)^{st}$ positions in $\r_{e_{n+1}}$ and $\L_{e_{n+1}},$ 
which we call by $a_{5,1},a_{6,1},...,a_{n,1}$ and $-a_{5,1},-a_{6,1},...,-a_{n,1},$ respectively.
\item Applying the transformation $e^{\prime}_1=e_1,e^{\prime}_2=e_2,
e^{\prime}_3=e_3+\frac{a_{2,3}}{b-a}e_2,e^{\prime}_{i}=e_{i},
(4\leq i\leq n+1,n\geq4),$
we remove $a_{2,3}$ and $\frac{a\cdot a_{2,3}}{b-a}$ from the $(2,3)^{rd}$ positions in $\r_{e_{n+1}}$ and $\L_{e_{n+1}},$
respectively. This transformation changes the entries in the $(2,1)^{st}$ positions in $\r_{e_{n+1}}$ and $\L_{e_{n+1}}$
to 
$\frac{a(b_{2,1}-b_{2,3}+a_{2,3})}{b-a}$ and $b_{2,1}-b_{2,3}+a_{2,3},$ respectively.

\item The transformation $e^{\prime}_1=e_1+\frac{b_{2,1}-b_{2,3}+a_{2,3}}{b-a}e_2,e^{\prime}_{i}=e_{i},
(2\leq i\leq n,n\geq4),e^{\prime}_{n+1}=e_{n+1}+\frac{a_{2,n+1}}{b-a}e_4+\sum_{k=5}^{n-1}a_{k+1,1}e_{k}$
removes $\frac{a(b_{2,1}-b_{2,3}+a_{2,3})}{b-a}$ and $b_{2,1}-b_{2,3}+a_{2,3}$ from the entries in the $(2,1)^{st}$ positions in $\r_{e_{n+1}}$ and $\L_{e_{n+1}},$
respectively. It also removes 
$a_{k+1,1}$ and $-a_{k+1,1}$ in $\r_{e_{n+1}}$ and $\L_{e_{n+1}},$ respectively, from the entries in the $(k+1,1)^{st}$
positions, where $(5\leq k\leq n-1)$ and the coefficient $a_{2,n+1}$ in front of $e_2$ in $[e_{n+1},e_{n+1}]$. This transformation affects the entries in the $(5,1)^{st}$ positions in $\r_{e_{n+1}}$ and $\L_{e_{n+1}},$
which we change back.
\item Then we scale $a$ to unity applying the transformation $e^{\prime}_i=e_i,(1\leq i\leq n,n\geq4),e^{\prime}_{n+1}=\frac{e_{n+1}}{a}.$
Renaming $\frac{b}{a}$ by b and $\frac{a_{5,1}}{a}$ by $c,$
 we obtain a continuous family of Leibniz algebras given below, where $c=0,$ when $n=4$:
\begin{equation}
\left\{
\begin{array}{l}
\displaystyle  \nonumber [e_1,e_{n+1}]=e_1+(b-2)e_3+ce_5,
[e_3,e_{n+1}]=(b-1)e_3, [e_4,e_{n+1}]=(1-b)e_2+be_4,\\
\displaystyle[e_{j},e_{n+1}]=\left(b+j-4\right)e_{j},(5\leq j\leq n),[e_{n+1},e_1]=-e_1+(2-b)e_3-ce_5,\\
\displaystyle[e_{n+1},e_2]=-be_2,[e_{n+1},e_i]=\left(4-i-b\right)e_i,(3\leq i\leq n,n\geq4),\\
\displaystyle (b\neq1,n=4\,\,or\,\,b\neq1,b\neq4-n,n\geq5).
\end{array} 
\right.
\end{equation} 
Then we have the following two cases:

(I) If $b\neq0,(n\geq5),$ then we apply the transformation
$e^{\prime}_1=e_1-\frac{c}{b}e_5,e^{\prime}_i=e_i,(2\leq i\leq n+1)$
to remove $c,-c$
from the $(5,1)^{st}$ positions in $\r_{e_{n+1}}$ and $\L_{e_{n+1}},$ respectively. We have the following continuous family of Leibniz
algebras:
\begin{equation}
\begin{array}{l}
\displaystyle  [e_1,e_{n+1}]=e_1+(b-2)e_3,
[e_3,e_{n+1}]=(b-1)e_3, [e_4,e_{n+1}]=(1-b)e_2+be_4,\\
\displaystyle[e_{j},e_{n+1}]=\left(b+j-4\right)e_{j},(5\leq j\leq n),[e_{n+1},e_1]=-e_1+(2-b)e_3,\\
\displaystyle[e_{n+1},e_2]=-be_2,[e_{n+1},e_i]=\left(4-i-b\right)e_i,(3\leq i\leq n,n\geq4),\\
\displaystyle (b\neq0,b\neq1,n=4\,\,or\,\,b\neq0,b\neq1,b\neq4-n,n\geq5).
\label{L(5.2.3)}
\end{array} 
\end{equation} 
(II) Suppose $b=0.$ If $c=0,$ then we have a limiting case of $(\ref{L(5.2.3)})$ with $b=0,(n\geq4).$
If $c\neq0$ in particular greater than zero or less than zero, respectively, then
we apply the transformation $e^{\prime}_1=\sqrt{\pm c}e_1,e^{\prime}_2=\pm ce_2,e^{\prime}_i=\left(\pm c\right)^{\frac{i-2}{2}}e_i,
(3\leq i\leq n,n\geq5),e^{\prime}_{n+1}=e_{n+1}$ to scale $c$ to $\pm1$ and we obtain the algebra $l_{n+1,2}$ given below:
\begin{equation}
\begin{array}{l}
\displaystyle  \nonumber [e_1,e_{n+1}]=e_1-2e_3+\delta e_5,
[e_3,e_{n+1}]=-e_3, [e_4,e_{n+1}]=e_2,[e_{j},e_{n+1}]=\left(j-4\right)e_{j},\\
\displaystyle[e_{n+1},e_1]=-e_1+2e_3-\delta e_5, [e_{n+1},e_i]=\left(4-i\right)e_i, (\delta=\pm1,3\leq i\leq n,5\leq j\leq n,n\geq5).
\end{array} 
\end{equation} 
\end{itemize}
\item[(2)] Suppose $b:=(4-n)a,a\neq0,(n\geq5).$ We have that the right (not a derivation) and the left (a derivation) 
multiplication operators restricted to the nilradical are below:
$$\r_{e_{n+1}}=\left[\begin{smallmatrix}
 a & 0 & 0 & 0&0&0&\cdots && 0&0 & 0\\
 a_{2,1}+A_{4,1}-A_{4,3} & 0 & a_{2,3}& (n-3)a &0&0 & \cdots &&0  & 0& 0\\
  (2-n)a & 0 & (3-n)a & 0 & 0&0 &\cdots &&0 &0& 0\\
  0 & 0 &  0 & (4-n)a &0 &0 &\cdots&&0 &0 & 0\\
 0 & 0 & a_{5,3} & 0 & (5-n)a  &0&\cdots & &0&0 & 0\\
  0 & 0 &\boldsymbol{\cdot} & a_{5,3} & 0 &(6-n)a&\cdots & &0&0 & 0\\
    0 & 0 &\boldsymbol{\cdot} & \boldsymbol{\cdot} & \ddots &0&\ddots &&\vdots&\vdots &\vdots\\
  \vdots & \vdots & \vdots &\vdots &  &\ddots&\ddots &\ddots &\vdots&\vdots & \vdots\\
 0 & 0 & a_{n-2,3}& a_{n-3,3}& \cdots&\cdots &a_{5,3}&0&-2a &0& 0\\
0 & 0 & a_{n-1,3}& a_{n-2,3}& \cdots&\cdots &\boldsymbol{\cdot}&a_{5,3}&0 &-a& 0\\
 0 & 0 & a_{n,3}& a_{n-1,3}& \cdots&\cdots &\boldsymbol{\cdot}&\boldsymbol{\cdot}&a_{5,3} &0& 0
\end{smallmatrix}\right],$$
$$\L_{e_{n+1}}=\left[\begin{smallmatrix}
 -a & 0 & 0 & 0&0&0&\cdots && 0&0 & 0\\
  b_{2,1}-A_{4,3} & (n-4)a & \frac{a_{2,3}}{3-n}& 0 &0&0 & \cdots &&0  & 0& 0\\
  (n-2)a & 0 & (n-3)a & 0 & 0&0 &\cdots &&0 &0& 0\\
  0 & 0 &  0 & (n-4)a &0 &0 &\cdots&&0 &0 & 0\\
 0 & 0 & -a_{5,3} & 0 & (n-5)a  &0&\cdots & &0&0 & 0\\
  0 & 0 &\boldsymbol{\cdot} & -a_{5,3} & 0 &(n-6)a&\cdots & &0&0 & 0\\
    0 & 0 &\boldsymbol{\cdot} & \boldsymbol{\cdot} & \ddots &0&\ddots &&\vdots&\vdots &\vdots\\
  \vdots & \vdots & \vdots &\vdots &  &\ddots&\ddots &\ddots &\vdots&\vdots & \vdots\\
 0 & 0 & -a_{n-2,3}& -a_{n-3,3}& \cdots&\cdots &-a_{5,3}&0&2a &0& 0\\
0 & 0 & -a_{n-1,3}& -a_{n-2,3}& \cdots&\cdots &\boldsymbol{\cdot}&-a_{5,3}&0 &a& 0\\
 0 & 0 & -a_{n,3}& -a_{n-1,3}& \cdots&\cdots &\boldsymbol{\cdot}&\boldsymbol{\cdot}&-a_{5,3} &0&0
\end{smallmatrix}\right].$$
\begin{itemize}[noitemsep, topsep=0pt]
\allowdisplaybreaks 
\item We apply the transformation $e^{\prime}_1=e_1,e^{\prime}_2=e_2,e^{\prime}_i=e_i-\frac{a_{k-i+3,3}}{(k-i)a}e_k,(3\leq i\leq n-2,
i+2\leq k\leq n),e^{\prime}_{j}=e_{j},(n-1\leq j\leq n+1),$ where $k$ is fixed, renaming all the affected entries the way they were.
This transformation removes $a_{5,3},a_{6,3},...,a_{n,3}$ in $\r_{e_{n+1}}$ and $-a_{5,3},-a_{6,3},...,-a_{n,3}$ in $\L_{e_{n+1}}.$
Besides it introduces the entries in the $(5,1)^{st},(6,1)^{st},...,(n,1)^{st}$ positions in $\r_{e_{n+1}}$ and $\L_{e_{n+1}},$ 
which we name by $a_{5,1},a_{6,1},...,a_{n,1}$ and $-a_{5,1},-a_{6,1},...,-a_{n,1},$ respectively.
\item Applying the transformation $e^{\prime}_1=e_1,e^{\prime}_2=e_2,
e^{\prime}_3=e_3+\frac{a_{2,3}}{(3-n)a}e_2,e^{\prime}_{i}=e_{i},
(4\leq i\leq n+1),$
we remove $a_{2,3}$ and $\frac{a_{2,3}}{3-n}$ from the $(2,3)^{rd}$ positions in $\r_{e_{n+1}}$ and $\L_{e_{n+1}},$
respectively. This transformation changes the entries in the $(2,1)^{st}$ positions in $\r_{e_{n+1}}$ and $\L_{e_{n+1}},$
respectively, to 
$\frac{b_{2,1}-b_{2,3}+a_{2,3}}{3-n}$ and $b_{2,1}-b_{2,3}+a_{2,3}.$
\item The transformation $e^{\prime}_1=e_1+\frac{b_{2,1}-b_{2,3}+a_{2,3}}{(3-n)a}e_2,e^{\prime}_{i}=e_{i},
(2\leq i\leq n),e^{\prime}_{n+1}=e_{n+1}+\frac{(3-n)a_{5,1}}{n-4}e_2+\sum_{k=4}^{n-1}a_{k+1,1}e_{k}$
removes $\frac{b_{2,1}-b_{2,3}+a_{2,3}}{3-n}$ and $b_{2,1}-b_{2,3}+a_{2,3}$ from the entries in the $(2,1)^{st}$ positions in $\r_{e_{n+1}}$ and $\L_{e_{n+1}},$
respectively. It also removes 
$a_{k+1,1}$ and $-a_{k+1,1}$ in $\r_{e_{n+1}}$ and $\L_{e_{n+1}},$ respectively, from the $(k+1,1)^{st}$
positions, where $(4\leq k\leq n-1)$.
\item Then we scale $a$ to unity applying the transformation $e^{\prime}_i=e_i,(1\leq i\leq n),e^{\prime}_{n+1}=\frac{e_{n+1}}{a}$.
We rename $\frac{a_{n,n+1}}{a^2}$ back by $a_{n,n+1}$ and we have a continuous family of Leibniz algebras:
\begin{equation}
\begin{array}{l}
\displaystyle  \nonumber [e_1,e_{n+1}]=e_1+(2-n)e_3, [e_3,e_{n+1}]=(3-n)e_3,[e_4,e_{n+1}]=(n-3)e_2+
(4-n)e_4,\\
\displaystyle [e_{j},e_{n+1}]=\left(j-n\right)e_{j},(5\leq j\leq n-1),[e_{n+1},e_{n+1}]=a_{n,n+1}e_n,[e_{n+1},e_1]=-e_1+(n-2)e_3,\\
\displaystyle [e_{n+1},e_2]=(n-4)e_2,[e_{n+1},e_i]=\left(n-i\right)e_i,(3\leq i\leq n-1,n\geq5).
\end{array} 
\end{equation}
If $a_{n,n+1}=0,$ then we have a limiting case of $(\ref{L(5.2.3)})$ with $b=4-n,(n\geq5).$ If $a_{n,n+1}\neq0$ in particular greater than zero or less than zero, respectively,
 then we apply the transformation
$e^{\prime}_i=\left(\pm a_{n,n+1}\right)^{\frac{i}{n-2}}e_i,(1\leq i\leq 2),e^{\prime}_k=\left(\pm a_{n,n+1}\right)^{\frac{k-2}{n-2}}e_k,(3\leq k\leq n),
e^{\prime}_{n+1}=e_{n+1}$
to scale $a_{n,n+1}$ to $\pm1$. We have the algebra $l_{n+1,3}$ given below:
\begin{equation}
\begin{array}{l}
\displaystyle  \nonumber [e_1,e_{n+1}]=e_1+(2-n)e_3, [e_3,e_{n+1}]=(3-n)e_3,[e_4,e_{n+1}]=(n-3)e_2+
(4-n)e_4,\\
\displaystyle [e_{j},e_{n+1}]=\left(j-n\right)e_{j},(5\leq j\leq n-1),[e_{n+1},e_{n+1}]=\delta e_n,[e_{n+1},e_1]=-e_1+(n-2)e_3,\\
\displaystyle [e_{n+1},e_2]=(n-4)e_2,[e_{n+1},e_i]=\left(n-i\right)e_i,(\delta=\pm1,3\leq i\leq n-1,n\geq5).
\end{array} 
\end{equation}
\end{itemize}
\item[(3)] Suppose $a=0$ and $b\neq0,(n\geq4).$ The right (not a derivation)
and the left (a derivation) multiplication operators restricted to the nilradical are given below:
$$\r_{e_{n+1}}=\left[\begin{smallmatrix}
 0 & 0 & 0 & 0&0&0&\cdots && 0&0 & 0\\
 a_{2,3} & 0 & a_{2,3}& -b &0&0 & \cdots &&0  & 0& 0\\
  b & 0 & b & 0 & 0&0 &\cdots &&0 &0& 0\\
  0 & 0 &  0 & b &0 &0 &\cdots&&0 &0 & 0\\
 0 & 0 & a_{5,3} & 0 & b  &0&\cdots & &0&0 & 0\\
  0 & 0 &\boldsymbol{\cdot} & a_{5,3} & 0 &b&\cdots & &0&0 & 0\\
    0 & 0 &\boldsymbol{\cdot} & \boldsymbol{\cdot} & \ddots &0&\ddots &&\vdots&\vdots &\vdots\\
  \vdots & \vdots & \vdots &\vdots &  &\ddots&\ddots &\ddots &\vdots&\vdots & \vdots\\
 0 & 0 & a_{n-2,3}& a_{n-3,3}& \cdots&\cdots &a_{5,3}&0&b &0& 0\\
0 & 0 & a_{n-1,3}& a_{n-2,3}& \cdots&\cdots &\boldsymbol{\cdot}&a_{5,3}&0 &b& 0\\
 0 & 0 & a_{n,3}& a_{n-1,3}& \cdots&\cdots &\boldsymbol{\cdot}&\boldsymbol{\cdot}&a_{5,3} &0&b
\end{smallmatrix}\right],$$
$$\L_{e_{n+1}}=\left[\begin{smallmatrix}
 0 & 0 & 0 & 0&0&0&\cdots && 0&0 & 0\\
  b_{2,1} & -b & 0& 0 &0&0 & \cdots &&0  & 0& 0\\
 - b & 0 & -b & 0 & 0&0 &\cdots &&0 &0& 0\\
  0 & 0 &  0 & -b &0 &0 &\cdots&&0 &0 & 0\\
 0 & 0 & -a_{5,3} & 0 & -b  &0&\cdots & &0&0 & 0\\
  0 & 0 &\boldsymbol{\cdot} & -a_{5,3} & 0 &-b&\cdots & &0&0 & 0\\
    0 & 0 &\boldsymbol{\cdot} & \boldsymbol{\cdot} & \ddots &0&\ddots &&\vdots&\vdots &\vdots\\
  \vdots & \vdots & \vdots &\vdots &  &\ddots&\ddots &\ddots &\vdots&\vdots & \vdots\\
 0 & 0 & -a_{n-2,3}& -a_{n-3,3}& \cdots&\cdots &-a_{5,3}&0&-b &0& 0\\
0 & 0 & -a_{n-1,3}& -a_{n-2,3}& \cdots&\cdots &\boldsymbol{\cdot}&-a_{5,3}&0 &-b& 0\\
 0 & 0 & -a_{n,3}& -a_{n-1,3}& \cdots&\cdots &\boldsymbol{\cdot}&\boldsymbol{\cdot}&-a_{5,3} &0&-b
\end{smallmatrix}\right].$$
\begin{itemize}[noitemsep, topsep=0pt]
\allowdisplaybreaks 
\item We apply the transformation $e^{\prime}_1=e_1,e^{\prime}_2=e_2,
e^{\prime}_3=e_3+\frac{a_{2,3}}{b}e_2,e^{\prime}_{i}=e_{i},
(4\leq i\leq n+1,n\geq4)$
to remove $a_{2,3}$ from the $(2,1)^{st}$ and the $(2,3)^{rd}$ positions in $\r_{e_{n+1}}$.
This transformation changes the entry in the $(2,1)^{st}$ position in $\L_{e_{n+1}}$
to $b_{2,1}+a_{2,3}.$
\item The transformation $e^{\prime}_1=e_1+\frac{b_{2,1}+a_{2,3}}{b}e_2,e^{\prime}_{i}=e_{i},
(2\leq i\leq n+1,n\geq4)$
removes $b_{2,1}+a_{2,3}$ from the entry in the $(2,1)^{st}$ position in $\L_{e_{n+1}}.$
\item To scale $b$ to unity, we apply the transformation $e^{\prime}_i=e_i,(1\leq i\leq n,n\geq4),e^{\prime}_{n+1}=\frac{e_{n+1}}{b}.$ 
Then we rename $\frac{a_{5,3}}{b},\frac{a_{6,3}}{b},...,\frac{a_{n,3}}{b}$ by $a_{5,3},a_{6,3},...,a_{n,3},$ respectively.
We obtain a Leibniz algebra
\begin{equation}
\begin{array}{l}
\displaystyle  \nonumber [e_1,e_{n+1}]=e_3, [e_3,e_{n+1}]=
e_3+\sum_{k=5}^n{a_{k,3}e_k},[e_4,e_{n+1}]=-e_2+e_4+ \sum_{k=6}^n{a_{k-1,3}e_k},\\
\displaystyle
[e_{j},e_{n+1}]=e_{j}+\sum_{k=j+2}^n{a_{k-j+3,3}e_k},(5\leq j\leq n),[e_{n+1},e_1]=-e_3,[e_{n+1},e_{2}]=-e_2,\\
\displaystyle [e_{n+1},e_i]=-e_i-\sum_{k=i+2}^n{a_{k-i+3,3}e_k},(3\leq i\leq n,n\geq4).
\end{array} 
\end{equation}
 If $a_{5,3}\neq0,(n\geq5),$ precisely, greater than zero and less than zero, respectively, then applying
the transformation $e^{\prime}_i=\left(\pm a_{5,3}\right)^{\frac{i}{2}}e_i,(1\leq i\leq 2),e^{\prime}_k=\left(\pm a_{5,3}\right)^{\frac{k-2}{2}}e_k,
(3\leq k\leq n),e^{\prime}_{n+1}=e_{n+1},$ we scale it to $\pm1.$ We also rename all the affected entries the way they were
and then we rename $a_{6,3},...,a_{n,3}$ by $b_1,...,b_{n-5},$ respectively.
We combine with the case when $a_{5,3}=0$ and obtain a Leibniz algebra $l_{n+1,4}$ given below:
\begin{equation}
\begin{array}{l}
\displaystyle  \nonumber [e_1,e_{n+1}]=e_3, [e_3,e_{n+1}]=
e_3+\epsilon e_5+\sum_{k=6}^n{b_{k-5}e_k},[e_4,e_{n+1}]=-e_2+e_4+\epsilon e_6+ \sum_{k=7}^n{b_{k-6}e_k},\\
\displaystyle
[e_{j},e_{n+1}]=e_{j}+\epsilon e_{j+2}+\sum_{k=j+3}^n{b_{k-j-2}e_k},(5\leq j\leq n),[e_{n+1},e_1]=-e_3,[e_{n+1},e_{2}]=-e_2,\\
\displaystyle [e_{n+1},e_i]=-e_i-\epsilon e_{i+2}-\sum_{k=i+3}^n{b_{k-i-2}e_k},(\epsilon=0,\pm1,3\leq i\leq n,n\geq4).
\end{array} 
\end{equation}
\begin{remark}
We notice that $\epsilon=0,$ when $n=4.$
\end{remark}
\end{itemize}
\item[(4)] Suppose $b:=a,a\neq0,(n\geq4).$ The right (not a derivation) and the left (a derivation) multiplication
operators restricted to the nilradical are below:
$$\r_{e_{n+1}}=\left[\begin{smallmatrix}
 a & 0 & 0 & 0&0&0&\cdots && 0&0 & 0\\
 a_{2,1} & 0 & 0& 0 &0&0 & \cdots &&0  & 0& 0\\
  -a & 0 & 0 & 0 & 0&0 &\cdots &&0 &0& 0\\
  0 & 0 &  0 & a &0 &0 &\cdots&&0 &0 & 0\\
 0 & 0 & a_{5,3} & 0 & 2a  &0&\cdots & &0&0 & 0\\
  0 & 0 &\boldsymbol{\cdot} & a_{5,3} & 0 &3a&\cdots & &0&0 & 0\\
    0 & 0 &\boldsymbol{\cdot} & \boldsymbol{\cdot} & \ddots &0&\ddots &&\vdots&\vdots &\vdots\\
  \vdots & \vdots & \vdots &\vdots &  &\ddots&\ddots &\ddots &\vdots&\vdots & \vdots\\
 0 & 0 & a_{n-2,3}& a_{n-3,3}& \cdots&\cdots &a_{5,3}&0&(n-5)a &0& 0\\
0 & 0 & a_{n-1,3}& a_{n-2,3}& \cdots&\cdots &\boldsymbol{\cdot}&a_{5,3}&0 &(n-4)a& 0\\
 0 & 0 & a_{n,3}& a_{n-1,3}& \cdots&\cdots &\boldsymbol{\cdot}&\boldsymbol{\cdot}&a_{5,3} &0& (n-3)a
\end{smallmatrix}\right],$$
$$\L_{e_{n+1}}=\left[\begin{smallmatrix}
 -a & 0 & 0 & 0&0&0&\cdots && 0&0 & 0\\
  b_{2,3} & -a & b_{2,3}& 0 &0&0 & \cdots &&0  & 0& 0\\
  a& 0 & 0 & 0 & 0&0 &\cdots &&0 &0& 0\\
  0 & 0 &  0 & -a &0 &0 &\cdots&&0 &0 & 0\\
 0 & 0 & -a_{5,3} & 0 & -2a  &0&\cdots & &0&0 & 0\\
  0 & 0 &\boldsymbol{\cdot} & -a_{5,3} & 0 &-3a&\cdots & &0&0 & 0\\
    0 & 0 &\boldsymbol{\cdot} & \boldsymbol{\cdot} & \ddots &0&\ddots &&\vdots&\vdots &\vdots\\
  \vdots & \vdots & \vdots &\vdots &  &\ddots&\ddots &\ddots &\vdots&\vdots & \vdots\\
 0 & 0 & -a_{n-2,3}& -a_{n-3,3}& \cdots&\cdots &-a_{5,3}&0&(5-n)a &0& 0\\
0 & 0 & -a_{n-1,3}& -a_{n-2,3}& \cdots&\cdots &\boldsymbol{\cdot}&-a_{5,3}&0 &(4-n)a& 0\\
 0 & 0 & -a_{n,3}& -a_{n-1,3}& \cdots&\cdots &\boldsymbol{\cdot}&\boldsymbol{\cdot}&-a_{5,3} &0& (3-n)a
\end{smallmatrix}\right].$$
\begin{itemize}[noitemsep, topsep=0pt]
\allowdisplaybreaks 
\item We apply the transformation $e^{\prime}_1=e_1,e^{\prime}_2=e_2,e^{\prime}_i=e_i-\frac{a_{k-i+3,3}}{(k-i)a}e_k,(3\leq i\leq n-2,
i+2\leq k\leq n,n\geq5),e^{\prime}_{j}=e_{j},(n-1\leq j\leq n+1),$ where $k$ is fixed renaming all the affected entries the way they were.
This transformation removes $a_{5,3},a_{6,3},...,a_{n,3}$ and $-a_{5,3},-a_{6,3},...,-a_{n,3}$ in $\r_{e_{n+1}}$ and $\L_{e_{n+1}},$
respectively.
Besides it introduces the entries in the $(5,1)^{st},(6,1)^{st},...,(n,1)^{st}$ positions in $\r_{e_{n+1}}$ and $\L_{e_{n+1}},$ 
which we call by $a_{5,1},a_{6,1},...,a_{n,1}$ and $-a_{5,1},-a_{6,1},...,-a_{n,1},$ respectively.
\item Applying the transformation $e^{\prime}_1=e_1,e^{\prime}_2=e_2,
e^{\prime}_3=e_3+\frac{b_{2,3}}{a}e_2,e^{\prime}_{i}=e_{i},
(4\leq i\leq n+1,n\geq4),$
we remove $b_{2,3}$ from the $(2,1)^{st}$ and the $(2,3)^{rd}$ positions in $\L_{e_{n+1}}.$
This transformation changes the entry in the $(2,1)^{st}$ position in $\r_{e_{n+1}}$
to $a_{2,1}+b_{2,3}.$
\item The transformation $e^{\prime}_1=e_1+\frac{a_{2,1}+b_{2,3}}{a}e_2,e^{\prime}_{i}=e_{i},
(2\leq i\leq n,n\geq4),e^{\prime}_{n+1}=e_{n+1}+\sum_{k=4}^{n-1}a_{k+1,1}e_{k}$
removes $a_{2,1}+b_{2,3}$ from the entry in the $(2,1)^{st}$ position in $\r_{e_{n+1}}$.
 It also removes 
$a_{k+1,1}$ and $-a_{k+1,1}$ in $\r_{e_{n+1}}$ and $\L_{e_{n+1}},$ respectively, from the entries in the $(k+1,1)^{st}$
positions, where $(4\leq k\leq n-1)$.
\item Then we scale $a$ to unity applying the transformation $e^{\prime}_i=e_i,(1\leq i\leq n,n\geq4),e^{\prime}_{n+1}=\frac{e_{n+1}}{a}$
and obtain a Leibniz algebra given below:
\begin{equation}
\begin{array}{l}
\displaystyle  \nonumber [e_1,e_{n+1}]=e_1-e_3,[e_{i},e_{n+1}]=\left(i-3\right)e_{i},[e_{n+1},e_1]=-e_1+e_3,[e_{n+1},e_{2}]=-e_2,\\
\displaystyle [e_{n+1},e_i]=\left(3-i\right)e_i,(3\leq i\leq n,n\geq4),
\end{array} 
\end{equation} 
which is a limiting case of $(\ref{L(5.2.3)})$ with $b=1,(n\geq4).$ Altogether $(\ref{L(5.2.3)})$
and all its limiting cases after replacing $b$ with $a$ give us a Leibniz algebra $l_{n+1,1}:$
\begin{equation}
\begin{array}{l}
\displaystyle \nonumber [e_1,e_{n+1}]=e_1+(a-2)e_3,
[e_3,e_{n+1}]=(a-1)e_3, [e_4,e_{n+1}]=(1-a)e_2+ae_4,\\
\displaystyle[e_{j},e_{n+1}]=\left(a+j-4\right)e_{j},(5\leq j\leq n),[e_{n+1},e_1]=-e_1+(2-a)e_3,\\
\displaystyle[e_{n+1},e_2]=-ae_2,[e_{n+1},e_i]=\left(4-i-a\right)e_i,(3\leq i\leq n,n\geq4).
\end{array} 
\end{equation}
\end{itemize}
\end{enumerate}
\end{proof}

\subsubsection{Two dimensional left solvable extensions of $\mathcal{L}^2$}
The non-zero inner derivations of $\mathcal{L}^2,(n\geq4)$ are
given by
 \[
\L_{e_1}=\left[\begin{smallmatrix}
0&0 & 0 & 0 & \cdots & 0 & 0  & 0 \\
1&0 & 1 & 0 & \cdots & 0 & 0  & 0 \\
0&0 & 0 & 0 & \cdots & 0 & 0 & 0 \\
 0&0 & -1 & 0 & \cdots & 0 & 0 & 0 \\
0& 0 & 0 & -1 & \cdots & 0 & 0 & 0 \\
\vdots& \vdots  & \vdots  & \vdots  & \ddots & \vdots & \vdots & \vdots\\
 0& 0 & 0 & 0&\cdots & -1 & 0 &0\\
  0& 0 & 0 & 0&\cdots & 0 & -1 &0
\end{smallmatrix}\right],\L_{e_3}=E_{4,1},
\L_{e_i}=E_{i+1,1}=\left[\begin{smallmatrix} 0 & 0&0&\cdots &  0 \\
 0& 0&0&\cdots &  0 \\
  0 & 0&0&\cdots &  0 \\
    0 & 0&0&\cdots &  0 \\
 \vdots &\vdots &\vdots& & \vdots\\
 1 &0&0& \cdots & 0\\
  \vdots &\vdots &\vdots& & \vdots\\
  \boldsymbol{\cdot} & 0&0&\cdots &  0
 \end{smallmatrix}\right]\,(4\leq i\leq n-1),\] where $E_{4,1}$ and $E_{i+1,1}$ are $n\times n$ matrices that has $1$ in the 
 $(4,1)^{st}$ and $(i+1,1)^{st}$
positions, respectively, and all other entries are zero.

 We refer to Section \ref{Section5.1.2} for more explanation.
We set $\left(
\begin{array}{c}
  a \\
 b
\end{array}\right)=\left(
\begin{array}{c}
  1\\
 0
\end{array}\right)$ and $\left(
\begin{array}{c}
 \alpha \\
 \beta
\end{array}\right)=\left(
\begin{array}{c}
  1\\
 2
\end{array}\right)$ in $\L_{e_{n+1}}$ and $\L_{e_{n+2}},$ respectively.
Therefore the vector space of outer derivations as the $n \times n$ matrices is as follows:
{ $$\L_{e_{n+1}}=\left[\begin{smallmatrix}
 -1 & 0 & 0 & 0&0&0&\cdots && 0&0 & 0\\
  b_{2,1}-b_{2,3}-a_{2,3} & 0 & -a_{2,3}& 0 &0&0 & \cdots &&0  & 0& 0\\
  2 & 0 & 1 & 0 & 0&0 &\cdots &&0 &0& 0\\
  0 & 0 &  0 & 0 &0 &0 &\cdots&&0 &0 & 0\\
 0 & 0 & -a_{5,3} & 0 & -1  &0&\cdots & &0&0 & 0\\
  0 & 0 &\boldsymbol{\cdot} & -a_{5,3} & 0 &-2&\cdots & &0&0 & 0\\
    0 & 0 &\boldsymbol{\cdot} & \boldsymbol{\cdot} & \ddots &0&\ddots &&\vdots&\vdots &\vdots\\
  \vdots & \vdots & \vdots &\vdots &  &\ddots&\ddots &\ddots &\vdots&\vdots & \vdots\\
 0 & 0 & -a_{n-2,3}& -a_{n-3,3}& \cdots&\cdots &-a_{5,3}&0&6-n &0& 0\\
0 & 0 & -a_{n-1,3}& -a_{n-2,3}& \cdots&\cdots &\boldsymbol{\cdot}&-a_{5,3}&0 &5-n& 0\\
 0 & 0 & -a_{n,3}& -a_{n-1,3}& \cdots&\cdots &\boldsymbol{\cdot}&\boldsymbol{\cdot}&-a_{5,3} &0&4-n
\end{smallmatrix}\right],$$}
{ $$\L_{e_{n+2}}=\left[\begin{smallmatrix}
 -1 & 0 & 0 & 0&0&0&\cdots && 0&0 & 0\\
  \beta_{2,1}-\beta_{2,3}+\alpha_{2,3} & -2 & \alpha_{2,3}& 0 &0&0 & \cdots &&0  & 0& 0\\
  0 & 0 & -1 & 0 & 0&0 &\cdots &&0 &0& 0\\
  0 & 0 &  0 & -2 &0 &0 &\cdots&&0 &0 & 0\\
 0 & 0 & -\alpha_{5,3} & 0 & -3  &0&\cdots & &0&0 & 0\\
  0 & 0 &\boldsymbol{\cdot} & -\alpha_{5,3} & 0 &-4&\cdots & &0&0 & 0\\
    0 & 0 &\boldsymbol{\cdot} & \boldsymbol{\cdot} & \ddots &0&\ddots &&\vdots&\vdots &\vdots\\
  \vdots & \vdots & \vdots &\vdots &  &\ddots&\ddots &\ddots &\vdots&\vdots & \vdots\\
 0 & 0 & -\alpha_{n-2,3}& -\alpha_{n-3,3}& \cdots&\cdots &-\alpha_{5,3}&0&4-n &0& 0\\
0 & 0 & -\alpha_{n-1,3}& -\alpha_{n-2,3}& \cdots&\cdots &\boldsymbol{\cdot}&-\alpha_{5,3}&0 &3-n& 0\\
 0 & 0 & -\alpha_{n,3}& -\alpha_{n-1,3}& \cdots&\cdots &\boldsymbol{\cdot}&\boldsymbol{\cdot}&-\alpha_{5,3} &0& 2-n
\end{smallmatrix}\right].$$}
We follow the steps of the ``General approach'' in Section \ref{Section5.1.2}:

\noindent $(i)$ We consider $\L_{[e_{n+1},e_{n+2}]},$ which is the same as $[\L_{e_{n+1}},\L_{e_{n+2}}]$ and deduce that $\alpha_{2,3}:=-a_{2,3},
\beta_{2,1}:=\beta_{2,3}+b_{2,3}-b_{2,1},
\alpha_{i,3}:=a_{i,3},(5\leq i\leq n).$ As a result, we make changes in the outer derivation $\L_{e_{n+2}}.$ It becomes as follows:
$$\L_{e_{n+2}}=\left[\begin{smallmatrix}
 -1 & 0 & 0 & 0&0&0&\cdots && 0&0 & 0\\
  b_{2,3}-b_{2,1}-a_{2,3} & -2 & -a_{2,3}& 0 &0&0 & \cdots &&0  & 0& 0\\
  0 & 0 & -1 & 0 & 0&0 &\cdots &&0 &0& 0\\
  0 & 0 &  0 & -2 &0 &0 &\cdots&&0 &0 & 0\\
 0 & 0 & -a_{5,3} & 0 & -3  &0&\cdots & &0&0 & 0\\
  0 & 0 &\boldsymbol{\cdot} & -a_{5,3} & 0 &-4&\cdots & &0&0 & 0\\
    0 & 0 &\boldsymbol{\cdot} & \boldsymbol{\cdot} & \ddots &0&\ddots &&\vdots&\vdots &\vdots\\
  \vdots & \vdots & \vdots &\vdots &  &\ddots&\ddots &\ddots &\vdots&\vdots & \vdots\\
 0 & 0 & -a_{n-2,3}& -a_{n-3,3}& \cdots&\cdots &-a_{5,3}&0&4-n &0& 0\\
0 & 0 & -a_{n-1,3}& -a_{n-2,3}& \cdots&\cdots &\boldsymbol{\cdot}&-a_{5,3}&0 &3-n& 0\\
 0 & 0 & -a_{n,3}& -a_{n-1,3}& \cdots&\cdots &\boldsymbol{\cdot}&\boldsymbol{\cdot}&-a_{5,3} &0& 2-n
\end{smallmatrix}\right]$$
We find the following commutators:
\allowdisplaybreaks
\begin{equation}
\left\{
\begin{array}{l}
\displaystyle  \nonumber \L_{[e_{1},e_{n+1}]}=\L_{e_1}-2\L_{e_3},\L_{[e_{2},e_{n+1}]}=0,\L_{[e_{i},e_{n+1}]}=(i-4)\L_{e_i}+\sum_{k=i+2}^{n-1}{a_{k-i+3,3}\L_{e_k}},\\
\displaystyle \L_{[e_{j},e_{n+1}]}=0,(n\leq j\leq n+1),\L_{[e_{n+2},e_{n+1}]}=-2\sum_{k=4}^{n-1}{a_{k+1,3}\L_{e_{k}}},\L_{[e_{1},e_{n+2}]}=\L_{e_1},\\
\displaystyle \L_{[e_{2},e_{n+2}]}=0,\L_{[e_{i},e_{n+2}]}=(i-2)\L_{e_i}+\sum_{k=i+2}^{n-1}{a_{k-i+3,3}\L_{e_k}},(3\leq i\leq n-1),\\
\displaystyle \L_{[e_{n},e_{n+2}]}=0,\L_{[e_{n+1},e_{n+2}]}=2\sum_{k=4}^{n-1}{a_{k+1,3}\L_{e_{k}}},\L_{[e_{n+2},e_{n+2}]}=0.
\end{array} 
\right.
\end{equation} 
\noindent $(ii)$ We include a linear combination of $e_2$ and $e_n:$
\begin{equation}
\left\{
\begin{array}{l}
\displaystyle  \nonumber [e_{1},e_{n+1}]=e_1+c_{2,1}e_2-2e_3+c_{n,1}e_n,[e_{2},e_{n+1}]=c_{2,2}e_2+c_{n,2}e_n,\\
\displaystyle[e_{i},e_{n+1}]=c_{2,i}e_2+(i-4)e_i+\sum_{k=i+2}^{n-1}{a_{k-i+3,3}e_k}+c_{n,i}e_n,[e_{j},e_{n+1}]=c_{2,j}e_2+c_{n,j}e_n,\\
\displaystyle [e_{n+2},e_{n+1}]=c_{2,n+2}e_2-2\sum_{k=4}^{n-1}{a_{k+1,3}e_{k}}+
c_{n,n+2}e_n,[e_{1},e_{n+2}]=e_1+d_{2,1}e_2+d_{n,1}e_n,\\
\displaystyle [e_{2},e_{n+2}]=d_{2,2}e_2+d_{n,2}e_n, [e_{i},e_{n+2}]=d_{2,i}e_2+(i-2)e_i+\sum_{k=i+2}^{n-1}{a_{k-i+3,3}e_k}+d_{n,i}e_n,\\
\displaystyle [e_{n},e_{n+2}]=d_{2,n}e_2+d_{n,n}e_n,[e_{n+1},e_{n+2}]=d_{2,n+1}e_2+2\sum_{k=4}^{n-1}{a_{k+1,3}e_{k}}+d_{n,n+1}e_n,\\
\displaystyle [e_{n+2},e_{n+2}]=d_{2,n+2}e_2+d_{n,n+2}e_n,(3\leq i\leq n-1,n\leq j\leq n+1).
\end{array} 
\right.
\end{equation} 
Besides we have the brackets from $\mathcal{L}^2$ and outer derivations $\L_{e_{n+1}}$ and $\L_{e_{n+2}}$.

\noindent $(iii)$ Case $(n\geq5)$ is shown in Table \ref{LeftCodimTwo(L2)}.
To prove case $(n=4),$ we apply as many identities as possible
of the case $(n\geq5)$. So we start with the identities $1.-3.,6.,$ where the identity $3.$ gives that $c_{2,4}:=1$ as well, but $c_{4,4}=0$
and the identity $6.$ gives that $d_{2,4}:=-1$ and $d_{4,4}:=2,$
where the first condition is the same. Further we apply the following identities one after another:
$\L_{e_5}[e_3,e_5]=[\L_{e_5}(e_3),e_5]+[e_3,\L_{e_5}(e_5)], \L_{e_5}[e_5,e_5]=[\L_{e_5}(e_5),e_5]+[e_5,\L_{e_5}(e_5)]$
(gives $[e_5,e_5]=c_{2,5}e_2$), $11.,12.$ (gives $d_{2,3}:=-a_{2,3},d_{4,3}=0$).  After that we continue with the identities:
$\L_{e_6}[e_6,e_5]=[\L_{e_6}(e_6),e_5]+[e_6,\L_{e_6}(e_5)]$ (gives $[e_6,e_6]=d_{2,6}e_2$),
$\L_{e_6}[e_5,e_5]=[\L_{e_6}(e_5),e_5]+[e_5,\L_{e_6}(e_5)]$(gives $c_{4,6}:=-2c_{2,5}$), $\L_{e_6}[e_5,e_6]=[\L_{e_6}(e_5),e_6]+[e_5,\L_{e_6}(e_6)]$
(gives $d_{2,5}:=-c_{2,5},d_{4,5}:=2c_{2,5}$). Finally we apply the identity $14.$ to obtain that $d_{2,1}:=b_{2,3}-b_{2,1}-a_{2,3}$
and $d_{4,1}=0.$

\begin{table}[htb]
\caption{Left Leibniz identities in the codimension two nilradical $\mathcal{L}^2,(n\geq5)$.}
\label{LeftCodimTwo(L2)}
\begin{tabular}{|l|p{2.7cm}|p{12cm}|}
\hline
\scriptsize Steps &\scriptsize Ordered triple &\scriptsize
Result\\ \hline
\scriptsize $1.$ &\scriptsize $\L_{e_1}\left([e_{1},e_{n+1}]\right)$ &\scriptsize
$[e_{2},e_{n+1}]=0$
$\implies$ $c_{2,2}=c_{n,2}=0.$\\ \hline
\scriptsize $2.$ &\scriptsize $\L_{e_1}\left([e_{1},e_{n+2}]\right)$ &\scriptsize
$[e_2,e_{n+2}]=0$
$\implies$ $d_{2,2}=d_{n,2}=0.$\\ \hline
\scriptsize $3.$ &\scriptsize $\L_{e_1}\left([e_{3},e_{n+1}]\right)$ &\scriptsize
$c_{2,4}:=1,c_{n,4}:=a_{n-1,3},$ where $a_{4,3}=0$ 
$\implies$  $[e_4,e_{n+1}]=e_2+\sum_{k=6}^{n}{a_{k-1,3}e_k}.$ \\ \hline
\scriptsize $4.$ &\scriptsize $\L_{e_1}\left([e_{i},e_{n+1}]\right)$ &\scriptsize
$c_{2,i+1}=0,c_{n,i+1}:=a_{n-i+2,3},(4\leq i\leq n-2),$ where $a_{4,3}=0$ 
$\implies$  $[e_{j},e_{n+1}]=\left(j-4\right)e_j+\sum_{k=j+2}^{n}{a_{k-j+3,3}e_k},(5\leq j\leq n-1).$ \\ \hline
\scriptsize $5.$ &\scriptsize $\L_{e_{1}}\left([e_{n-1},e_{n+1}]\right)$ &\scriptsize
$c_{2,n}=0,$ $c_{n,n}:=n-4$
$\implies$  $[e_{n},e_{n+1}]=\left(n-4\right)e_{n}.$ Altogether with $4.,$
  $[e_{i},e_{n+1}]=\left(i-4\right)e_i+\sum_{k=i+2}^{n}{a_{k-i+3,3}e_k},(5\leq i\leq n).$  \\ \hline
   \scriptsize $6.$ &\scriptsize $\L_{e_1}\left([e_{3},e_{n+2}]\right)$ &\scriptsize
$d_{2,4}:=-1,d_{n,4}:=a_{n-1,3},$ where $a_{4,3}=0$ 
$\implies$  $[e_{4},e_{n+2}]=-e_2+2e_4+\sum_{k=6}^{n}{a_{k-1,3}e_k}.$   \\ \hline  
   \scriptsize $7.$ &\scriptsize $\L_{e_1}\left([e_{i},e_{n+2}]\right)$ &\scriptsize
$d_{2,i+1}=0,d_{n,i+1}:=a_{n-i+2,3},(4\leq i\leq n-2),$ where $a_{4,3}=0$ 
$\implies$  $[e_{j},e_{n+2}]=\left(j-2\right)e_j+\sum_{k=j+2}^{n}{a_{k-j+3,3}e_k},(5\leq j\leq n-1).$   \\ \hline
\scriptsize $8.$ &\scriptsize $\L_{e_{1}}\left([e_{n-1},e_{n+2}]\right)$ &\scriptsize
 $d_{2,n}=0,$ $d_{n,n}:=n-2$ 
$\implies$  $[e_{n},e_{n+2}]=\left(n-2\right)e_{n}.$ Altogether with $7.,$
  $[e_{i},e_{n+2}]=\left(i-2\right)e_i+\sum_{k=i+2}^{n}{a_{k-i+3,3}e_k},(5\leq i\leq n).$   \\ \hline
  \scriptsize $9.$ &\scriptsize $\L_{e_{3}}\left([e_{n+1},e_{n+1}]\right)$ &\scriptsize
$c_{2,3}:=a_{2,3},$ $c_{n,3}:=a_{n,3}$ 
$\implies$  $[e_{3},e_{n+1}]=a_{2,3}e_2-e_3+\sum_{k=5}^n{a_{k,3}e_k}.$   \\ \hline
\scriptsize $10.$ &\scriptsize $\L_{e_{n+2}}\left([e_{n+1},e_{n+1}]\right)$ &\scriptsize
$c_{2,n+1}:=a_{5,3},c_{n,n+1}=0$
$\implies$ $[e_{n+1},e_{n+1}]=a_{5,3}e_2.$ \\ \hline
\scriptsize $11.$ &\scriptsize $\L_{e_1}\left([e_{n+1},e_{n+1}]\right)$ &\scriptsize
 $c_{2,1}:=b_{2,3}-b_{2,1}+a_{2,3}, c_{n,1}=0$
$\implies$  $[e_{1},e_{n+1}]=e_1+\left(b_{2,3}-b_{2,1}+a_{2,3}\right)e_2-2e_3.$\\ \hline
\scriptsize $12.$ &\scriptsize $\L_{e_{3}}\left([e_{n+2},e_{n+2}]\right)$ &\scriptsize
$d_{2,3}:=-a_{2,3},$ $d_{n,3}:=a_{n,3}$ 
$\implies$  $[e_{3},e_{n+2}]=-a_{2,3}e_2+e_3+\sum_{k=5}^n{a_{k,3}e_k}.$   \\ \hline
\scriptsize $13.$ &\scriptsize $\L_{e_{n+1}}\left([e_{n+2},e_{n+2}]\right)$ &\scriptsize
$d_{2,n+1}:=-a_{5,3},d_{n,n+2}=0$
$\implies$  $[e_{n+1},e_{n+2}]=-a_{5,3}e_2+2\sum_{k=4}^{n-1}{a_{k+1,3}e_k}+d_{n,n+1}e_n,[e_{n+2},e_{n+2}]=d_{2,n+2}e_2$. \\ \hline
\scriptsize $14.$ &\scriptsize $\L_{e_{1}}\left([e_{n+2},e_{n+2}]\right)$ &\scriptsize
$d_{2,1}:=b_{2,3}-b_{2,1}-a_{2,3},d_{n,1}=0$
$\implies$ $[e_{1},e_{n+2}]=e_1+\left(b_{2,3}-b_{2,1}-a_{2,3}\right)e_2.$\\ \hline
\scriptsize $15.$ &\scriptsize $\L_{e_{n+2}}\left([e_{n+1},e_{n+2}]\right)$ &\scriptsize
$d_{n,n+1}:=-c_{n,n+2}$
$\implies$ $[e_{n+1},e_{n+2}]=-a_{5,3}e_2+2\sum_{k=4}^{n-1}{a_{k+1,3}e_{k}}-
c_{n,n+2}e_n.$\\ \hline
\end{tabular}
\end{table}
In both cases $\L_{e_{n+1}}$ and $\L_{e_{n+2}}$ remain the same, but we assign $b_{2,1}-b_{2,3}-a_{2,3}:=b_{2,1},$
make corresponding changes in $\L_{e_{n+1}}$ and $\L_{e_{n+2}}$ and in the the remaining brackets below:
   \allowdisplaybreaks
\begin{equation}
\begin{array}{l}
\displaystyle  \nonumber (1)\,[e_{1},e_{5}]=e_1-b_{2,1}e_2-2e_3,
[e_{3},e_5]=a_{2,3}e_2-e_3, [e_{4},e_5]=e_2,[e_{5},e_{5}]=c_{2,5}e_2,\\
\displaystyle[e_{6},e_{5}]=c_{2,6}e_2-2c_{2,5}e_4,[e_{1},e_6]=e_1-\left(b_{2,1}+2a_{2,3}\right)e_2,
[e_3,e_6]=-a_{2,3}e_2+e_3,\\
\displaystyle [e_{4},e_6]=-e_2+2e_4,[e_{5},e_{6}]=-c_{2,5}\left(e_2-2e_{4}\right),[e_{6},e_{6}]=d_{2,6}e_2,(n=4).\\
\displaystyle  \nonumber (2)\,[e_{1},e_{n+1}]=e_1-b_{2,1}e_2-2e_3,
[e_{3},e_{n+1}]=a_{2,3}e_2-e_3+\sum_{k=5}^n{a_{k,3}e_k},\\
\displaystyle [e_{4},e_{n+1}]=e_2+\sum_{k=6}^n{a_{k-1,3}e_k},[e_{i},e_{n+1}]=(i-4)e_i+\sum_{k=i+2}^{n}{a_{k-i+3,3}e_k},[e_{n+1},e_{n+1}]=a_{5,3}e_2,\\
\displaystyle [e_{n+2},e_{n+1}]=c_{2,n+2}e_2-2\sum_{k=4}^{n-1}{a_{k+1,3}e_{k}}+
c_{n,n+2}e_n,[e_{1},e_{n+2}]=e_1-\left(b_{2,1}+2a_{2,3}\right)e_2,\\
\displaystyle [e_{3},e_{n+2}]=-a_{2,3}e_2+e_3+\sum_{k=5}^n{a_{k,3}e_k},[e_{4},e_{n+2}]=-e_2+2e_4+\sum_{k=6}^n{a_{k-1,3}e_k},\\
\displaystyle [e_{i},e_{n+2}]=(i-2)e_i+\sum_{k=i+2}^{n}{a_{k-i+3,3}e_k},[e_{n+1},e_{n+2}]=-a_{5,3}e_2+2\sum_{k=4}^{n-1}{a_{k+1,3}e_{k}}-c_{n,n+2}e_n,\\
\displaystyle [e_{n+2},e_{n+2}]=d_{2,n+2}e_2,(5\leq i\leq n,n\geq5).\end{array} 
\end{equation}
Altogether the nilradical $\mathcal{L}^2$ $(\ref{L2}),$ the outer derivations $\L_{e_{n+1}}$ and $\L_{e_{n+2}}$
and the remaining brackets give a continuous family of two solvable left Leibniz algebras
depending on the parameters.

\noindent$(iv)$ Then we continue with the ``absorption''.
\begin{itemize}[noitemsep, topsep=0pt]
\item First we apply the transformation $e^{\prime}_i=e_i,(1\leq i\leq n,n\geq4),e^{\prime}_{n+1}=e_{n+1}+\frac{c_{2,n+2}}{2}e_2,
e^{\prime}_{n+2}=e_{n+2}+\frac{d_{2,n+2}}{2}e_2.$
This transformation removes the coefficients $c_{2,n+2}$ and $d_{2,n+2}$ in front of $e_2$ in
$[e_{n+2},e_{n+1}]$ and $[e_{n+2},e_{n+2}],$ respectively, without affecting other entries.

\item Then we apply the transformation
$e^{\prime}_i=e_i,(1\leq i\leq n,n\geq4),e^{\prime}_{n+1}=e_{n+1}-a_{5,3}e_4,
e^{\prime}_{n+2}=e_{n+2}.$ It removes the coefficients $a_{5,3}$ and $-a_{5,3}$
in front of $e_2$ in $[e_{n+1},e_{n+1}]$ and $[e_{n+1},e_{n+2}],$ respectively. Besides it removes
$2a_{5,3}$ and $-2a_{5,3}$ in front of $e_4$ in $[e_{n+1},e_{n+2}]$ and  $[e_{n+2},e_{n+1}],$ respectively. 
This transformation affects the coefficients in front of $e_{k},(6\leq k\leq n-1)$
in $[e_{n+1},e_{n+2}]$ and $[e_{n+2},e_{n+1}],$ which we rename by $2a_{k+1,3}-a_{5,3}a_{k-1,3}$ and 
$-2a_{k+1,3}+a_{5,3}a_{k-1,3},$ respectively. It also affects the coefficients in front $e_n,(n\geq6)$ in
$[e_{n+2},e_{n+1}]$ and $[e_{n+1},e_{n+2}],$ which we rename back by $c_{n,n+2}$ and $-c_{n,n+2},$
respectively. Finally this transformation introduces $a_{5,3}$ and $-a_{5,3}$ in the $(5,1)^{st}$ position in $\r_{e_{n+1}}$ and $\L_{e_{n+1}},$
respectively.
\begin{remark}
If $n=4,$ then we change $a_{5,3}$ to $c_{2,5}$. In this case the transformation does not affect any other entries.
\end{remark}
\item
Applying the transformation $e^{\prime}_j=e_j,(1\leq j\leq n+1,n\geq6),
e^{\prime}_{n+2}=e_{n+2}+2a_{6,3}e_5+\sum_{k=6}^{n-1}{\frac{A_{k+1,3}}{k-4}e_k},$
where $A_{k+1,3}:=2a_{k+1,3}-\sum_{i=5}^6{(i-4)a_{i,3}a_{k-i+4,3}}-\sum_{i=8}^k{\frac{A_{i-1,3}a_{k-i+5,3}}{i-6}},$
$(6\leq k\leq n-1,n\geq7)$ such that $a_{4,3}=0,$
we remove the coefficients $2a_{6,3}$ and $-2a_{6,3}$ in front of $e_5$ in $[e_{n+1},e_{n+2}]$
and $[e_{n+2},e_{n+1}],$ respectively. Besides it removes $2a_{k+1,3}-a_{5,3}a_{k-1,3}$ and $-2a_{k+1,3}+a_{5,3}a_{k-1,3}$
in front of $e_k,(6\leq k\leq n-1)$ in $[e_{n+1},e_{n+2}]$ and $[e_{n+2},e_{n+1}],$ respectively.
This transformation introduces $-2a_{6,3}$ and $2a_{6,3}$ in the $(6,1)^{st}$ position;
$\frac{A_{k+1,3}}{4-k}$ and $\frac{A_{k+1,3}}{k-4}$ in the $(k+1,1)^{st},(6\leq k\leq n-1)$ positions in $\r_{e_{n+2}}$
and $\L_{e_{n+2}},$ respectively. It also affects the coefficients in front $e_n,(n\geq7)$ in
$[e_{n+2},e_{n+1}]$ and $[e_{n+1},e_{n+2}],$ which we rename back by $c_{n,n+2}$ and $-c_{n,n+2},$
respectively.
\item Finally applying the transformation $e^{\prime}_i=e_i,(1\leq i\leq n+1,n\geq5),
e^{\prime}_{n+2}=e_{n+2}-\frac{c_{n,n+2}}{n-4}e_n,$
we remove $c_{n,n+2}$ and $-c_{n,n+2}$ in front of $e_n$
in $[e_{n+2},e_{n+1}]$ and $[e_{n+1},e_{n+2}],$ respectively,
without affecting other entries. We obtain that $\L_{e_{n+1}}$ and $\L_{e_{n+2}}$ are as follows:
 \end{itemize}
{$$\L_{e_{n+1}}=\left[\begin{smallmatrix}
 -1 & 0 & 0 & 0&0&0&\cdots && 0&0 & 0\\
  b_{2,1} & 0 & -a_{2,3}& 0 &0&0 & \cdots &&0  & 0& 0\\
  2 & 0 & 1 & 0 & 0&0 &\cdots &&0 &0& 0\\
  0 & 0 &  0 & 0 &0 &0 &\cdots&&0 &0 & 0\\
 -a_{5,3} & 0 & -a_{5,3} & 0 & -1  &0&\cdots & &0&0 & 0\\
  0 & 0 &\boldsymbol{\cdot} & -a_{5,3} & 0 &-2&\cdots & &0&0 & 0\\
    0 & 0 &\boldsymbol{\cdot} & \boldsymbol{\cdot} & \ddots &0&\ddots &&\vdots&\vdots &\vdots\\
  \vdots & \vdots & \vdots &\vdots &  &\ddots&\ddots &\ddots &\vdots&\vdots & \vdots\\
 0 & 0 & -a_{n-2,3}& -a_{n-3,3}& \cdots&\cdots &-a_{5,3}&0&6-n &0& 0\\
0 & 0 & -a_{n-1,3}& -a_{n-2,3}& \cdots&\cdots &\boldsymbol{\cdot}&-a_{5,3}&0 &5-n& 0\\
 0 & 0 & -a_{n,3}& -a_{n-1,3}& \cdots&\cdots &\boldsymbol{\cdot}&\boldsymbol{\cdot}&-a_{5,3} &0&4-n
\end{smallmatrix}\right],$$}
$$\L_{e_{n+2}}=\left[\begin{smallmatrix}
 -1 & 0 & 0 & 0&0&0&0&\cdots && 0&0 & 0\\
 -b_{2,1}-2a_{2,3} & -2 & -a_{2,3}& 0 &0&0 & 0&\cdots &&0  & 0& 0\\
  0 & 0 & -1 & 0 & 0&0 &0&\cdots &&0 &0& 0\\
  0 & 0 &  0 & -2 &0 &0 &0&\cdots&&0 &0 & 0\\
  0 & 0 & -a_{5,3} & 0 & -3  &0&0&\cdots & &0&0 & 0\\
 2a_{6,3} & 0 &\boldsymbol{\cdot} & -a_{5,3} & 0 &-4&0&\cdots & &0&0 & 0\\
  \frac{1}{2}A_{7,3} & 0 &\boldsymbol{\cdot} & \boldsymbol{\cdot} & -a_{5,3}&0 &-5& &&\vdots&\vdots &\vdots\\
      \frac{1}{3}A_{8,3} & 0 &\boldsymbol{\cdot} & \boldsymbol{\cdot} &\boldsymbol{\cdot} &-a_{5,3}&0 &\ddots&&\vdots&\vdots &\vdots\\
  \vdots & \vdots & \vdots &\vdots &  &&\ddots&\ddots &\ddots &\vdots&\vdots & \vdots\\
  \frac{1}{n-7}A_{n-2,3} & 0 & -a_{n-2,3}& -a_{n-3,3}& \cdots&\cdots&\cdots &-a_{5,3}&0&4-n &0& 0\\
    \frac{1}{n-6}A_{n-1,3} & 0 & -a_{n-1,3}& -a_{n-2,3}& \cdots&\cdots&\cdots &\boldsymbol{\cdot}&-a_{5,3}&0 &3-n& 0\\
 \frac{1}{n-5}A_{n,3} & 0 & -a_{n,3}&-a_{n-1,3}& \cdots&\cdots &\cdots&\boldsymbol{\cdot}&\boldsymbol{\cdot}&-a_{5,3} &0& 2-n
\end{smallmatrix}\right],(n\geq4).$$
The remaining brackets are given below:
\begin{equation}
\left\{
\begin{array}{l}
\displaystyle  \nonumber [e_{1},e_{n+1}]=e_1-b_{2,1}e_2-2e_3+a_{5,3}e_5,(a_{5,3}=0,\,when\,\,n=4),[e_{3},e_{n+1}]=a_{2,3}e_2-e_3+\\
\displaystyle \sum_{k=5}^n{a_{k,3}e_k},[e_{4},e_{n+1}]=e_2+\sum_{k=6}^n{a_{k-1,3}e_k},[e_{i},e_{n+1}]=(i-4)e_i+\sum_{k=i+2}^{n}{a_{k-i+3,3}e_k},\\
\displaystyle [e_{1},e_{n+2}]=e_1-(b_{2,1}+2a_{2,3})e_2-
\sum_{k=6}^n{\frac{A_{k,3}}{k-5}e_k};where\,A_{6,3}:=2a_{6,3},\\
\displaystyle [e_{3},e_{n+2}]=-a_{2,3}e_2+e_3+\sum_{k=5}^n{a_{k,3}e_k},[e_{4},e_{n+2}]=-e_2+2e_4+\sum_{k=6}^n{a_{k-1,3}e_k},\\
\displaystyle [e_{i},e_{n+2}]=(i-2)e_i+\sum_{k=i+2}^{n}{a_{k-i+3,3}e_k},(5\leq i\leq n,n\geq4).
\end{array} 
\right.
\end{equation} 
\noindent $(v)$ Finally we apply the change of basis transformation:
$e^{\prime}_1=e_1-\left(b_{2,1}+2a_{2,3}\right)e_2+
\frac{2a_{6,3}}{3}e_6+\sum_{k=7}^{n}{\frac{C_{1,k}}{k-5}e_k},e^{\prime}_2=e_2,
e^{\prime}_3=e_3-a_{2,3}e_2,
e^{\prime}_i=e_i-\sum_{k=i+2}^n{\frac{B_{k-i+3,3}}{k-i}e_k},$
$(3\leq i\leq n-2),e^{\prime}_{n-1}=e_{n-1},e^{\prime}_{n}=e_{n},e^{\prime}_{n+1}=e_{n+1},e^{\prime}_{n+2}=e_{n+2},$
where $B_{j,3}:=a_{j,3}-\sum_{k=7}^j{\frac{B_{k-2,3}a_{j-k+5,3}}{k-5}},(5\leq j\leq n)$ and
$C_{1,k}:=\frac{2B_{k,3}}{k-3}-\frac{2}{3}a_{6,3}a_{k-3,3}-\sum_{i=9}^k{\frac{C_{1,i-2}a_{k-i+5,3}}{i-7}},(7\leq k\leq n),$
where $a_{4,3}=0.$
\begin{remark} We have that $a_{6,3}=0$ in the transformation, when $n=4$ and $n=5$.
\end{remark}
 We summarize a result 
in the following theorem: 
 \begin{theorem}\label{(L)Codim2L2} There is one solvable
indecomposable left Leibniz algebra up to isomorphism with a codimension two nilradical
$\mathcal{L}^2,(n\geq4),$ which is given below:
\begin{equation}
\begin{array}{l}
\displaystyle  \nonumber l_{n+2,1}: [e_1,e_{n+1}]=e_1-2e_3,[e_3,e_{n+1}]=-e_3,[e_4,e_{n+1}]=e_2,
 [e_{i},e_{n+1}]=(i-4)e_i,(5\leq i\leq n),\\
\displaystyle [e_{n+1},e_{1}]=-e_1+2e_3,[e_{n+1},e_{j}]=(4-j)e_j,[e_{1},e_{n+2}]=e_1,[e_3,e_{n+2}]=e_3,[e_4,e_{n+2}]=-e_2+2e_4,\\
\displaystyle 
[e_i,e_{n+2}]=(i-2)e_i,[e_{n+2},e_1]=-e_1,[e_{n+2},e_2]=-2e_2,[e_{n+2},e_j]=(2-j)e_j,(3\leq j\leq n),\\
\displaystyle DS=[n+2,n,n-2,0],LS=[n+2,n,n,...].
\end{array} 
\end{equation} 
\end{theorem}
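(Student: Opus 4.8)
The plan is to carry out, in the left-handed setting, the five-step ``General approach'' of Section~\ref{Section5.1.2}, starting from the two outer derivations $\L_{e_{n+1}}$ and $\L_{e_{n+2}}$ displayed just before the statement. I would first record that, by Remark~\ref{TwoOuterDerivations}, at most two outer derivations of $\mathcal{L}^2$ can be nil-independent, and that among the four families of Theorem~\ref{TheoremL(L2)Absorption} only case~$(1)$ supplies a pair of nil-independent left multiplication operators; this legitimizes the normalization $(a,b)=(1,0)$ and $(\alpha,\beta)=(1,2)$ and hence the displayed shapes of $\L_{e_{n+1}}$ and $\L_{e_{n+2}}$, whose free parameters are $b_{2,1},b_{2,3},a_{2,3},a_{5,3},\dots,a_{n,3}$ together with the analogous $\beta$'s.

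In step $(i)$ I would compute the commutator $\L_{[e_{n+1},e_{n+2}]}=[\L_{e_{n+1}},\L_{e_{n+2}}]$ and match it, modulo the central directions $e_2,e_n$, against a linear combination of the inner derivations $\L_{e_1},\L_{e_3},\L_{e_i}$ listed at the start of the subsection; this yields $\alpha_{2,3}:=-a_{2,3}$, $\beta_{2,1}:=\beta_{2,3}+b_{2,3}-b_{2,1}$, $\alpha_{i,3}:=a_{i,3}$ for $5\le i\le n$, together with all the remaining commutators $\L_{[e_k,e_{n+1}]}$ and $\L_{[e_k,e_{n+2}]}$. Step $(ii)$ reinstates a linear combination of $e_2$ and $e_n$ in every bracket $[e_r,e_s]$ having an index in $\{n+1,n+2\}$, bringing in the coefficients $c_{2,i},c_{n,i},d_{2,i},d_{n,i}$. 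Step $(iii)$ imposes the left Leibniz identity $\L_{e_t}([e_r,e_s])=[\L_{e_t}(e_r),e_s]+[e_r,\L_{e_t}(e_s)]$ on all triples: for $n\ge5$ this is exactly Table~\ref{LeftCodimTwo(L2)}, while $n=4$ needs the modified run of identities indicated in the text (several of which coincide or split, and where $a_{5,3}$ gets absorbed into $c_{2,5}$). After this step one has a continuous family of two left Leibniz algebras depending on $b_{2,1},a_{2,3},a_{5,3},\dots,a_{n,3}$, still carrying the residual data $c_{2,n+2},d_{2,n+2},c_{n,n+2}$ in the extra brackets.

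For the absorption of step $(iv)$ I would apply in succession $e'_{n+1}=e_{n+1}+\frac{c_{2,n+2}}{2}e_2$ and $e'_{n+2}=e_{n+2}+\frac{d_{2,n+2}}{2}e_2$; then $e'_{n+1}=e_{n+1}-a_{5,3}e_4$; then $e'_{n+2}=e_{n+2}+2a_{6,3}e_5+\sum_{k=6}^{n-1}\frac{A_{k+1,3}}{k-4}e_k$ with $A_{k+1,3}$ given by the recursion stated before the theorem; and finally $e'_{n+2}=e_{n+2}-\frac{c_{n,n+2}}{n-4}e_n$. Together these kill the $e_2$-coefficients of $[e_{n+1},e_{n+1}]$, $[e_{n+2},e_{n+2}]$ and $[e_{n+1},e_{n+2}]$ and push the surviving $a_{k,3}$'s into the first columns of $\L_{e_{n+1}},\L_{e_{n+2}}$ in a controlled way, producing the matrices displayed in the text. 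Step $(v)$ is then the single change of basis written just before the statement, whose components on $e_1$ and on $e_3,\dots,e_{n-2}$ are assembled from the auxiliary recursions $B_{j,3}$ and $C_{1,k}$; it simultaneously removes $b_{2,1}$, $a_{2,3}$ and all of $a_{5,3},\dots,a_{n,3}$, after which the brackets are precisely those of $l_{n+2,1}$, and the characteristic series $DS=[n+2,n,n-2,0]$, $LS=[n+2,n,n,\dots]$ are read off directly.

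The real difficulty, I expect, is step $(v)$: verifying that the recursively defined families $B_{j,3}$, $C_{1,k}$ (with the $A_{k+1,3}$ from step $(iv)$) genuinely clear every parameter $a_{5,3},\dots,a_{n,3}$ without regenerating any, and that the transformation fixes the nilradical $\mathcal{L}^2$ of $(\ref{L2})$ so the surrounding extension is unchanged. This is a bookkeeping-heavy induction on the index $k$, most safely confirmed by Maple in small dimensions (as the authors do) and then asserted in general. A secondary nuisance is the degenerate behaviour in dimensions $n=4$ and $n=5$, where $a_{6,3}$ --- and, when $n=4$, also $a_{5,3}$ --- is forced to vanish, so those low-dimensional cases have to be checked separately to confirm they coincide with $l_{n+2,1}$.
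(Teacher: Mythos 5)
Your proposal follows essentially the same route as the paper: the theorem is the summary of the computation carried out in the subsubsection preceding it, and your five steps (commutator matching modulo the center, reinstating $e_2,e_n$ components, the Leibniz identities of Table \ref{LeftCodimTwo(L2)} with the separate $n=4$ run, the four absorption transformations, and the final change of basis built from the $A$, $B$, $C$ recursions) reproduce that argument, including the normalization $(a,b)=(1,0)$, $(\alpha,\beta)=(1,2)$ justified by Remark \ref{TwoOuterDerivations} and case $(1)$ of Theorem \ref{TheoremL(L2)Absorption}. Your closing caveats about verifying the recursions and the degenerate low-dimensional cases match the checks the paper itself performs, so there is nothing to add.
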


\newpage

\end{document}